\definecolor{darkcandyapplered}{rgb}{0.64, 0.0, 0.0}
\definecolor{midnightblue}{rgb}{0.1, 0.1, 0.44}
\theoremstyle{plain}
   \newtheorem{theorem}{Theorem}[section]
   \newtheorem{lemma}[theorem]{Lemma}
   \newtheorem{corollary}[theorem]{Corollary}
\theoremstyle{definition}
\theoremstyle{remark}
\numberwithin{equation}{section}
\newcommand{\NN}{\mathbb{N}}
\newcommand{\ZZ}{\mathbb{Z}}
\newcommand{\CC}{\mathbb{C}}
\newcommand{\fS}{\mathfrak{S}}
\newcommand{\fB}{\mathfrak{B}}
\newcommand{\aA}{\mathcal{A}}
\newcommand{\bB}{\mathcal{B}}
\newcommand{\cC}{\mathcal{C}}
\newcommand{\dD}{\mathcal{D}}
\newcommand{\iI}{\mathcal{I}}
\newcommand{\bx}{{\mathbf x}}
\newcommand{\bX}{{\mathbf X}}
\newcommand{\by}{{\mathbf y}}
\newcommand{\bl}{{\pmb\lambda}}
\newcommand{\bm}{{\pmb\mu}}
\newcommand{\bQ}{{\pmb Q}}
\newcommand{\bP}{{\pmb P}}
\renewcommand{\to}{\rightarrow}
\newcommand{\toto}{\Rightarrow}
\newcommand{\ol}{\overline}
\newcommand{\Om}{\Omega}
\renewcommand{\emptyset}{\varnothing}
\newcommand{\ssum}{{\rm sum}}
\def\newop#1{\expandafter\def\csname #1\endcsname{\mathop{\rm
#1}\nolimits}}
\NewDocumentCommand{\cycle}{ O{\;} m }
 {
  (
  \alec_cycle:nn { #1 } { #2 }
  )
 }
\renewcommand{\tocsection}[3]{%
  \indentlabel{\@ifnotempty{#2}{\bfseries\ignorespaces#1 #2\quad}}\bfseries#3}
\renewcommand{\tocsubsection}[3]{%
  \indentlabel{\@ifnotempty{#2}{\ignorespaces#1 #2\quad}}#3}
\newcommand\@dotsep{4.5}
\def\@tocline#1#2#3#4#5#6#7{\relax
  \ifnum #1>\c@tocdepth % then omit
  \else
    \par \addpenalty\@secpenalty\addvspace{#2}%
    \begingroup \hyphenpenalty\@M
    \@ifempty{#4}{%
      \@tempdima\csname r@tocindent\number#1\endcsname\relax
    }{%
      \@tempdima#4\relax
    }%
    \parindent\z@ \leftskip#3\relax \advance\leftskip\@tempdima\relax
    \rightskip\@pnumwidth plus1em \parfillskip-\@pnumwidth
    #5\leavevmode\hskip-\@tempdima{#6}\nobreak
    \leaders\hbox{$\m@th\mkern \@dotsep mu\hbox{.}\mkern \@dotsep mu$}\hfill
    \nobreak
    \hbox to\@pnumwidth{\@tocpagenum{\ifnum#1=1\bfseries\fi#7}}\par% <-- \bfseries for \section page
    \nobreak
    \endgroup
  \fi}
\renewcommand\csname r@tocindent0\endcsname{0pt}
\def\l@subsection{\@tocline{2}{0pt}{2.5pc}{5pc}{}}
\begin{document}
\title[Euler--Mahonian statistics via quasisymmetric functions]{Specializations of colored quasisymmetric functions and Euler--Mahonian identities}
\subjclass[2010]{Primary: 05A15 ; Secondary: 05E05, 05E10, 05A05}
\keywords{Quasisymmetric function, generating function, specialization, descent set, major index, Euler--Mahonian distribution, Eulerian polynomial}
\author{Vassilis-Dionyssis~Moustakas}
\address{Department of Mathematics\\
National and Kapodistrian University of Athens\\
Panepistimioupolis\\
15784 Athens, Greece}
\email{vasmous@math.uoa.gr}
\date{\today}
\begin{abstract}
We propose a unified approach to prove general formulas for the joint distribution of an Eulerian and a Mahonian statistic over a set of colored permutations by specializing Poirier's colored quasisymmetric functions. We apply this method to derive formulas for Euler--Mahonian distributions on colored permutations, derangements and involutions. A number of known formulas are recovered as special cases of our results, including formulas of Biagioli--Zeng, Assaf, Haglund--Loehr--Remmel, Chow--Mansour, Biagioli--Caselli, Bagno--Biagioli, Faliharimalala--Zeng. Several new results are also obtained. For instance, a two-parameter flag major index on signed permutations is introduced and formulas for its distribution and its joint distribution with some Eulerian partners are proven.
\end{abstract}
\maketitle
%
%\textbf{\small{}Keywords and phrases:}{\small{} quasisymmetric functions, generating functions, specializations, descent set, major index, Euler--Mahonian distribution, Eulerian polynomial}{\let\thefootnote\relax\footnotetext{2010 \textit{Mathematics Subject Classification}. Primary: 05A15 ; Secondary: 05E05, 05E10, 05A05}} 
%
\tableofcontents

%
%%
%%%
%%%%
%%%%%----------------------INTRODUCTION
%%%%
%%%
%%
%
\section{Introduction}
\label{sec: intro}

	For a positive integer $n$, we denote by $\fS_n$ the set of permutations of $[n] := \{1, 2, \dots, n\}$. For $w \in \fS_n$, an index $i \in [n-1]$ is called a \emph{descent} of $w$, if $w(i) > w(i+1)$. The set of all descents of $w$, written $\Des(w)$, is called the \emph{descent~set} of $w$. The cardinality and the sum of all elements of $\Des(w)$ are written as $\des(w)$ and $\maj(w)$, respectively, and called the \emph{descent~number} and \emph{major~index} of $w$. A statistic on $\fS_n$ which is equidistributed with $\des$ (resp. $\maj$) is called \emph{Eulerian} (resp. \emph{Mahonian}). Let
\[
A_n (x, q) \ := \ \sum_{w \in \fS_n} \, x^{\des(w)} q^{\maj(w)}
\]
be the generating polynomial for the joint distribution $(\des, \maj)$ on $\fS_n$, sometimes called the $n$-th \emph{~$q$-Eulerian~polynomial}. The polynomial $A_n (x) := A_n (x, 1)$ is called  the $n$-th \emph{Eulerian~polynomial} and constitutes one of the most important polynomials in combinatorics. The interested reader is referred to the wonderful book exposition of Petersen \cite{PetEN}, and references therein, for many properties of Eulerian distributions and connections with algebra and geometry.

	The distribution of a pair $(\eul, \mah)$ of permutation statistics on $\fS_n$, which satisfies 
\[
\sum_{w \in \fS_n} \, x^{\eul(w)}q^{\mah(w)} \ = \ A_n(x, q)
\]
is called \emph{Euler--Mahonian}. MacMahon \cite[Vol.2,~Section~IX]{MacCA} proved a formula which specializes to
\begin{equation}
\label{eq: Carlitz}
\sum_{m \ge 0} \, [m + 1]_q^n \, x^m \ = \ \frac{A_n (x, q)}{(1-x)(1-xq)\cdots (1-xq^n)},
\end{equation}
where $A_0 (x, q) := 1$ and $[n]_{q} := 1 + q + \cdots + q^{n-1}$ is the $q$-analogue of $n$. This formula, usually attributed to Carlitz \cite{Car75} (and hence called the \emph{Carlitz~identity}), serves as the basis for many generalizations on Coxeter groups and $r$-colored permutation groups, that is wreath products $\ZZ_r \wr \fS_n$. Identities such as \eqref{eq: Carlitz}, involving Euler--Mahonian distributions will be called \emph{Euler--Mahonian~identities}. \Cref{eq: Carlitz}, for $q=1$, reduces to the following identity \cite[Proposition~1.4.4]{StaEC1} (sometimes used as the definition of Eulerian polynomials) 
\begin{equation}
\label{eq: Euler}
\sum_{m \geq 0} \, (m+1)^n x^m  \ = \ \frac{A_n(x)}{(1 - x)^{n+1}}.
\end{equation}

	As mentioned in \cite[Section~1]{BB13}, a general approach to prove Euler--Mahonian identities, among others, is via the theory of symmetric/quasisymmetric functions. Quasisymmetric functions are certain power series in infinitely many variables that generalize the notion of symmetric functions. They first appeared, not with this name yet, in Stanley's thesis \cite{Sta72} (for a detailed description of Stanley's contribution to symmetric/quasisymmetric functions see \cite{BM16}) and were later defined and studied systematically by Gessel \cite{Ges84} (see also \cite[Section~7.19]{StaEC2} and \cite[Section~8.5]{Ges16}). In this paper, we aim to provide a unified approach to prove Euler--Mahonian identities on $r$-colored permutation groups by specializing Poirier's colored analogue of quasisymmetric functions \cite{Poi98}.

	Specializations of symmetric functions date back to Stanley's work on the enumeration of plane partitions \cite{Sta71}. Gessel and Reutenauer, in their seminal paper \cite{GR93}, used the stable principal specialization and the principal specialization of order $m$ of fundamental quasisymmetric functions, together with the fact that the quasisymmetric generating function of the set of permutations of fixed cycle type is symmetric, to derive formulas for the joint distribution of the descent statistic and major index on cycles, involutions and derangements. Let us now illustrate how one can specialize fundamental quasisymmetric functions in order to prove \cref{eq: Carlitz}. This proof will serve as a prototype for all proofs of our applications in \cref{sec: applications}. For a similar approach, see the recent paper of Gessel and Zhuang \cite{GZ20}.

	For a sequence $\bx = (x_1, x_2, \dots)$ of commuting indeterminates, the  \emph{fundamental~quasisymmetric~function} associated to $S \subseteq [n-1]$ is defined by  
\begin{equation} \label{eq:defFS(x)}
  F_{n, S} (\bx) \ :=
  \sum_{\substack{ i_1 \ge i_2 \ge \cdots \ge i_n \ge 1  \\ j \in S \,\Rightarrow\, 
  i_j < i_{j+1}}}
  x_{i_1} x_{i_2} \cdots x_{i_n},
\end{equation}
and $F_{0, \emptyset} (\bx) := 1$. The original definition \cite[Equation~(7.89)]{StaEC2} actually defines $F_{n, S}$ as in \eqref{eq:defFS(x)} with $1 \le i_1 \le i_2 \le \cdots \le i_n$ instead. Our choice of definition will become clear later in the paper (see also \cite[Section~4.3]{Ath18}). The standard way to connect quasisymmetric functions with permutation statistics is by associating the fundamental quasisymmetric function $F_{n, \Des(w)}$ with $w \in \fS_n$ as done in \cite[Section~5]{GR93}. Let $\CC[q]$ (resp. $\CC[[q]]$) denote the ring of polynomials (resp. formal power series) in variable $q$ with complex coefficients. The \emph{stable~principal~specialization} (resp. \emph{principal~specialization} of order $m$) is a ring homomorphism $\ps : \QSym (\bx) \to \CC[[q]]$ (resp. $\ps_m :\QSym (\bx) \to \CC[q]$) defined by the substitutions
\[
x_1 = 1, \, x_2 = q, \, x_3 = q^2, \, \dots
\]
and
\[
x_1 = 1, \, x_2 = q, \, \dots, x_m = q^{m-1}, \, x_{m+1} = x_{m+2} = \cdots = 0,
\]
respectively, for a positive integer $m$, where $\QSym (\bx)$ stands for the $\CC$-algebra of quasisymmetric functions of bounded degree with complex coefficients in $\bx$.

	The principal specialization of order $m$ and stable principal specialization of $F_{n, S}$ are given by the following formulas \cite[Lemma~5.2]{GR93} (see also the first half of \cite[Section~4]{Ges84})
\begin{align}
\sum_{m \ge 1} \, \ps_m (F_{n, S}) \, x^{m-1} \ &= \
\frac{x^{|S|} \, q^{\ssum(S)}}{(1-x)(1-xq)\cdots (1-xq^n)} \label{eq: psmFS(x)} \\
\ps (F_{n, S}) \ &= \ \frac{q^{\ssum(S)}}{(1-q)(1-q^2) \cdots (1 - q^n)}. \label{eq: psFS(x)}
\end{align}
where $\ssum(S)$ stands for the sum of all elements of $S$. These formulas allow us to study the Euler--Mahonian distribution on $\fS_n$ by specializing the quasisymmetric generating function
\[
F(\aA; \bx) \ := \ \sum_{w \in \aA} \, F_{n, \Des(w)}(\bx)
\]
associated to a subset $\aA \subseteq \fS_n$.  In particular, one has \cite[Theorem~5.3]{GR93}
\begin{align}
\sum_{m \ge 1} \, \ps_m ( F (\aA; \bx) ) \, x^{m-1} \ &= \
\frac{\sum_{w \in \aA} \, x^{\des(w)} \, q^{\maj(w)}}{(1-x)(1-xq)\cdots (1-xq^n)} \label{eq: psmF(A;x)} \\
\ps ( F (\aA; \bx) ) \ &= \ \frac{\sum_{w \in \aA} \, q^{\maj(w)}}{(1-q)(1-q^2) \cdots (1 - q^n)}, \label{eq: psF(A;x)}
\end{align}

	A celebrated result, due to MacMahon (see, for example, \cite[Chapter~1,~Notes]{StaEC1}), is the formula
\begin{equation}
\label{eq: MacMahon}
A_n (1, q) \ = \ [1]_q[2]_q\cdots [n]_q
\end{equation}
for the distribution of the major index on $\fS_n$. Although the relation between Equations \eqref{eq: Carlitz} and \eqref{eq: MacMahon} is not obvious, the above mentioned machinery allows us to easily prove both these equations in a uniform way. The quasisymmetric generating function $F(\fS_n; \bx)$ is known to have \cite[Corollary~7.12.5]{StaEC2} the following nice form
\begin{equation}
\label{eq: F(Sn,x)}
F(\fS_n; \bx) = (x_1 + x_2 + \cdots)^n.
\end{equation}
Taking the principal specialization of order $m$  and the stable principal specialization of Equation \eqref{eq: F(Sn,x)} yields 
\begin{align*}
\ps_m ( F (\fS_n; \bx) ) \ &= \ [m]_q^n \\
\ps (F(\aA; \bx) ) \ &= \ \frac{1}{(1 - q)^n},
\end{align*}
respectively. Then, Equations \eqref{eq: Carlitz} and \eqref{eq: MacMahon} follow by substituting these computations in Equations \eqref{eq: psmF(A;x)} and \eqref{eq: psF(A;x)}, for $\aA = \fS_n$, respectively.

	This proof suggests that whenever $F(\aA; \bx)$ has a nice form, then we can use Formulas \eqref{eq: psmF(A;x)} and \eqref{eq: psF(A;x)} to compute Euler--Mahonian identities on $\aA$. In particular, for $\aA = \dD_n$, the set of derangements of $\fS_n$, Gessel and Reutenauer \cite[Theorem~8.1]{GR93} computed $F(\dD_n; \bx)$ in terms of elementary and complete homogeneous symmetric functions. In Section \ref{subsec: derangem} we specialize their formula to prove an Euler--Mahonian identity on $\dD_n$, which refines Wachs' celebrated formula \cite[Theorem~4]{Wa89}
\begin{equation}
\label{eq: Wachs}
\sum_{w \in \dD_n} \, q^{\maj(w)} \ = \ [n]_q! \sum_{k = 0}^n \, (-1)^k \, \frac{q^{\binom{k}{2}}}{[k]_q!}.
\end{equation}

	The goal of this paper is to extend this symmetric/quasisymmetric function approach by specializing colored quasisymmetric functions, introduced by Poirier \cite{Poi98}, in order to prove Euler--Mahonian identities for the colored permutation groups. In other words, we provide a colored analogue of the method demonstrated above (pioneered by Gessel and Reutenauer in \cite{GR93}), by replacing Gessel's fundamental quasisymmetric functions with Poirier's colored quasisymmetric functions and using variations of the stable principal specialization and principal specialization of order $m$. A first instance of this technique appears in the work of Athanasiadis \cite[Equation~(45)]{Ath18}, where he proves \cite[Proposition~2.22]{Ath18} an expansion of the generating polynomial of the Eulerian distribution on signed involutions in terms of Eulerian polynomials of type $B$. We provide a colored generalization of this formula (see \cref{cor: colored athanasiadis}) for absolute involutions, a class of $r$-colored permutations which coincides with involutions and signed involutions for $r=1$ and $r=2$, respectively. A second instance appears in \cite[Lemma~3.1]{Mou19}. 

	Different type $B$ analogues of quasisymmetric functions have been suggested (for a comparison between Chow's type $B$ quasisymmetric functions and Poirier's, see \cite{Pet05}). Our choice of colored quasisymmetric functions, and much of the motivation behind this paper comes from the fact that Poirier's signed analogue of the fundamental quasisymmetric functions \cite{Poi98} were recently employed by Adin et al. \cite{AAER17} in order to define and study a signed analogue of the concept of fine sets and fine characters of Adin and Roichman \cite{AR15}. All classes of permutations considered in the applications are fine sets in the sense of \cite{AAER17}. 

	The paper is organized as follows. \Cref{sec: prelim} surveys colored permuation statistics and Euler--Mahonian identities known in the literature and discusses colored quasisymmetric functions.  \Cref{sec: special} presents the main results of this paper on specializations of colored quasisymmetric functions (see Theorems \ref{thm: Icolor}, \ref{thm: IIcolor} and \ref{thm: IIIcolor}). In this section we also introduce a two-parameter flag major index on signed permutations and provide formulas for its joint distribution with some Eulerian partner (\cref{thm: IVklvariation}). \Cref{sec: applications} presents applications of our main results. In particular, \cref{subsec: Euler--Mahonian identities} concerns Euler--Mahonian identities on permutations, Sections \ref{subsec: derangem} and \ref{subsec: involution} concern Euler--Mahonian identities on derangements and involutions, respectively (in the case of involutions, we also discuss some enumerative aspects). \Cref{subsec: bimahonian} concerns bimahonian and multivariate distributions involving Eulerian and Mahonian statistics on colored permutations.  We recover, as special cases of our results, various formulas of Biagioli--Zeng (\cref{eq: applicationIcolorChowGesselmaj}), Assaf (Equations \eqref{eq: applicationIcolormajtypeA} and \eqref{eq: assaf}), Haglund--Loehr--Remmel (\cref{eq: applicationIIcolorfmaj}), Biagioli--Caselli, Chow--Mansour and Biagioli--Zeng (\cref{eq: applicationIIcolordesfmaj}), Bagno--Biagioli (\cref{eq: applicationIIIcolorfdesfmaj}) and Faliharimalala--Zeng (\cref{eq: FaliharimalalaZengfmaj}). We refine and generalize formulas of  Chow--Gessel (\cref{eq: applicationIVdesfmaj}), Adin--Roichman (\cref{eq: applicationIVfmaj}), Wachs (Theorems \ref{thm: Euler--Mahonianderangements}, \ref{thm: Euler--Mahoniancoloredderangements} and \cref{eq: desklmajoronsignedderangements}), Chow (Equations  \eqref{eq: desklmajoronsignedderangements} and \eqref{eq: klfmajonsignedderangements}), Faliharimalala--Zeng (\cref{thm: Euler--Mahoniancoloredderangements}), D\'esarm\'enien--Foata and Gessel--Reutenauer (\cref{cor: eulermahonian for colored involutions}), Athanasiadis (\cref{cor: colored athanasiadis}), Gessel, Gordon and Roselle (Equations \eqref{eq: maj imaj}, \eqref{eq: fmaj ifmaj} and \eqref{eq: fmajkl ifmajk'l'}), Garsia--Gessel (Equations \eqref{eq: des ides maj imaj}, \eqref{eq: des ides fmaj ifmaj}, \eqref{eq: fdes ifdes fmaj ifmaj} and \eqref{eq: des ides fmajkl ifmajk'l'}) and Foata--Han (\cref{cor: IIImultivariate}).

%
%%
%%%
%%%%
%%%%%----------------------PRELIMINARIES
%%%%
%%%
%%
%
\section{Preliminaries}
\label{sec: prelim}

This section provides key definitions, discusses colored permutation statistics and surveys several known Euler--Mahonian identities for the colored permutation groups. It also recalls the definition and basic facts about Poirier's  quasisymmetric functions and reviews some tools from the representation theory of colored permutation groups, which will be used in the sequel.  

Throughout the paper we assume familiarity with basic concepts in the theory of symmetric functions as presented in \cite[Section~7]{StaEC2}. We denote by $|S|$ the cardinality of a finite set $S$, by $\ZZ$ (resp. $\NN$) the set of integers (resp. nonnegative integers). For integers $a \leq b$ we set $[a, b] := \{a, a+1, \dots, b\}$. In particular, we set $[n] :=[1, n]$. For a nonnegative integer $n$, define 
\[
(x; q)_n := \begin{cases} 1, & \text{if $n = 0$} \\
(1-x)(1-xq)\cdots(1-xq^{n-1}), & \text{if $n \ge 1$} 
\end{cases}
\]
and set $(q)_n := (q, q)_n$. Also, for a statement $P$, let $\chi(P) = 1$, if $P$ is true and $\chi(P) = 0$, otherwise. We shall use boldface to denote vectors and $r$-partite concepts.

%
%%
%%%
%%%%
%%%%%----------------------COLORED~PERM~STATS
%%%%
%%%
%%
%
\subsection{Colored permutation statistics}
\label{subsec: colored}

Fix a positive integer $r$ and let $\ZZ_r$ be the cyclic group of order $r$. The elements of  $\ZZ_r$, will be represented by those of $[0, r-1]$ and will be thought of as colors. Let 
\[
\Om_{n, r} := \{1^0, 2^0, \dots, n^0, 1^1, 2^1, \dots, n^1, \dots, 1^{r-1}, 2^{r-1}, \dots, n^{r-1}\}
\]
be the set of $r$-colored integers. We will often identify colored integers $i^0$ with $i$.

The \emph{$r$-colored~permutation~group}, denoted by $\fS_{n, r}$, consists of all permutations $w$ of $\Om_{n, r}$ such that $w (a^0)=b^j \toto w (a^i)=b^{i +j}$, where $i + j$ is computed modulo $r$ and the product of $\fS_{n, r}$ is composition of permutations.  The $r$-colored permutation group can be realized as the wreath product group $\ZZ_r \wr \fS_n$ (see, for example, \cite[Section~2]{Stei94}). The elements of $\fS_{n, r}$ are represented  in window notation as $w = w(1)^{c_1}w(2)^{c_2}\cdots w(n)^{c_n}$, where $w(1)w(2)\cdots w(n) \in \fS_n$ is the \emph{underlying~permutation} and $(c_1, c_2, \dots, c_n)$ is the \emph{color~vector} of $w$. We will represent both the colored permutation and the underlying permutation by the same letter. 

	The case $r=2$ is of particular interest. $\fS_{n, 2}$ is the hyperoctahedral group, written $\fB_n$, the group of signed permutations of length $n$. Signed permutations, or $2$-colored permutations, are bijective maps $w : \Om_{n, 2} \to \Om_{n, 2}$ such that $w(\ol{i}) = \ol{w(i)}$, for every $i \in \Om_{n, 2}$, where in this case the set of $2$-colored integers $\Om_{n, 2}$ is identified as the set $\{1, 2, \dots, n\} \cup \{\ol{1}, \ol{2}, \dots, \ol{n}\}$ (see, for example, \cite{AAER17}). The hyperoctahedral group is a Coxeter group of type $B_n$ (see, for example, \cite[Part~III]{PetEN}). It is often required to treat this case separately.

%The study of signed permutation statistics was initiated by Reiner \cite{Rei93} and Brenti \cite{Bre94} and occupies a large ammount of reasearc

	Steingr\'{i}msson \cite{Stei94} studied combinatorial aspects of $r$-colored permutations by introducing a notion of descent and excedance for $r$-colored permutations and Eulerian polynomials for $\fS_{n, r}$. Let $<_{\St}$ be the  following total order on $\Om_{n, r}$
\[
1^0 <_{\St} \cdots <_{\St} n^0 <_{\St} 1^1 <_{\St} \cdots <_{\St} n^1 <_{\St} \cdots <_{\St} 1^{r-1} <_{\St} \cdots <_{\St} n^{r-1}.
\]
For $w = w(1)^{c_1}w(2)^{c_2}\cdots w(n)^{c_n} \in \fS_{n, r}$, an index $i \in [n]$ is called a \emph{descent} of $w$, if $1 \le i \le n-1$ and $w(i) >_{\St} w(i+1)$ or if $i = n$ and $c_n \neq 0$. Let $\Des_{<_{\St}}(w)$ be the set of all descents of $w$ and $\des_{<_{\St}}(w)$ its cardinality. Steingr\'{i}msson proved \cite[Theorem~17]{Stei94} the following formula for the generating polynomial of the $\des_{<_{\St}}$-distribution 
\begin{equation}
\label{eq: Steingrimsson}
\sum_{m \ge 0} \, (rm + 1)^n \, x^m \ = \ \frac{\sum_{w \in \fS_{n, r}
} \, x^{\des_{<_{\St}}(w)}}{(1 -x)^{n+1}},
\end{equation}
which reduces to \cref{eq: Euler} for $r=1$ and generalizes a formula of Brenti \cite[Theorem~3.4~(ii)]{Bre94} for $=2$.

Biagioli and Caselli \cite{BC12} studied a notion of descent set by considering the \emph{color~order} on $\Om_{n, r}$, defined as
\[
1^{r-1} <_c \cdots <_c n^{r-1} <_c \cdots <_c 1^1 <_c \cdots <_c n^1 <_c  1^0 <_c \cdots <_c n^0.
\]
For $w \in \fS_{n, r}$, we define $\Des_{<_c}(w)$ to be the set of all indices $i \in [n-1]$ such that  $w(i) >_c w(i+1)$ together with 0, whenever $c_1 \neq 0$ and write $\des_{<_c}(w)$ for its cardinality. It follows from their work \cite[Corollary~5.3~for~$p=s=q=1$]{BC12} that \cref{eq: Steingrimsson} holds if we replace $<_{\St}$ with $<_c$. 

Another notion of descent set was studied by Biagioli and Zeng \cite{BZ11}. In particular, let $<_{\ell}$ be the following total order on $\Om_{n, r}$ 
\[
n^{r-1} <_{\ell} \cdots <_{\ell} n^1 <_{\ell} \cdots <_{\ell} 1^{r-1} <_{\ell} \cdots <_{\ell} 1^1 <_{\ell} 1^0 <_{\ell} \cdots <_{\ell} n^0,
\]
called the \emph{length~order}. For $w \in \fS_{n, r}$, we define $\Des_{<_{\ell}}(w)$ to be the set of all indices $1 \le i \le n-1$ such that  $w(i) >_{\ell} w(i+1)$ together with 0, whenever $c_1 \neq 0$ and write $\des_{<_{\ell}}(w)$ for its cardinality. They proved \cite[Proposition~8.1~for~$q=1$]{BZ11} that \cref{eq: Steingrimsson} still holds if we replace $<_{\St}$ with $<_{\ell}$. It also follows from \cite[Proposition~7.1]{BC12}. So, the above three mentioned distributions are all equidistributed on $\fS_{n, r}$ and hence can be called Eulerian statistics on $r$-colored permutations. In the applications we will use the color order.

As a group, $\fS_{n, r}$ is generated by the set $S := \{s_0, s_1, \dots, s_{n-1}\}$, where $s_0 := \cycle{1^0, 1^1}$ and $s_i := \cycle{i^0, {i+1}^0}$, for all $1 \le i \le n-1$ (see, for example, \cite[Section~2]{AR01} and \cite[Section~2.2]{Bag04}).  The length function, written $\ell_S$, with respect to $S$ satisfies \cite[Theorem~4.4]{Bag04}
\begin{equation}
\label{eq: poincare series of Snr}
\sum_{w \in \fS_{n, r}} \, q^{\ell_S(w)} \ = \ [n]_q! \prod_{i =1}^n \, (1 + q^i[r-1]_q).
\end{equation}
From this point of view, a Mahonian statistic on $\fS_{n, r}$, is expected to be equidistributed with the length function. Bagno \cite[Theorem~5.2]{Bag04} introduced such a statistic by using the length order. It is defined as follows. For $w = w(1)^{c_1}w(2)^{c_2}\cdots w(n)^{c_n} \in \fS_{n, r}$, let
\[
\lmaj (w) := \maj_{<_\ell}(w) + \sum_{c_i \neq 0} (w(i) - 1) + \csum(w),
\]
where $\maj_{<_\ell} (w)$ is the sum of all elements of $\Des_{<_\ell} (w)$ and $\csum(w) = \sum_{i=1}^{n} \, c_i$ is the sum of the colors of all entries of $w$. It is worth noticing, as the authors in \cite[Section~7]{BC12} point out, that the length order seems to be the suitable order for proving a combinatorial interpretation of the length function of $\fS_{n, r}$, whereas the color order is often used in the study of some algebraic aspects such as the invariant theory of $\fS_{n, r}$.

Another Mahonian candidate, the flag major index, was introduced by Adin and Roichman \cite{AR01}. We use the following combinatorial interpretation \cite[Theorem~3.1]{AR01} as our definition. For $w \in \fS_{n, r}$, 
the \emph{flag~major~index} of $w$ is defined by
\[
\fmaj_{<_c}(w) := r\maj_{<_c}(w) + \csum(w),
\]
where $\maj_{<_c} (w)$ is the sum of all elements of $\Des_{<_c} (w)$. The authors remark, after the proof of \cite[Theorem~2.2]{AR01}, that the flag major index is not equidistributed with the length function on $\fS_{n, r}$ for $r \ge 3$. Haglund et. al were the first to explicitly compute \cite[Theorem~4.5]{HLR05} a formula for the $\fmaj$-distribution
\begin{equation}
\label{eq: fmaj distribution}
\sum_{w \in \fS_{n, r}} \, q^{\fmaj_{<_c}(w)} \ = \ \prod_{i =1}^n \, [ri]_q.
\end{equation}
Although the right-hand side does not coincide with that of \cref{eq: poincare series of Snr} for $r \ge 3$, \cref{eq: fmaj distribution} reduces to \cref{eq: MacMahon} for $r=1$ and the flag major index is a valid Mahonian statistic on $\fS_{n, r}$, in the following sense. It is known that $r, 2r, \dots, nr$ are the degrees of $\fS_{n, r}$, when viewed as a complex reflection group, so the right-hand side of \cref{eq: fmaj distribution} is the Hilbert series for the coinvariant algebra of $\fS_{n, r}$ (see \cite[Equation~(1.4)]{BRS08}).

Chow and Mansour \cite[Theorem~5]{CM11} prove a different interpretation of the flag major index using Steingr\'{i}msson's total order on $\Om_{n, r}$, namely
\[
\fmaj_{<_c}(w) \ = \ r \maj_{<_{\St}}(w) - \csum(w)
\]
for every $w \in \fS_{n, r}$. We denote by $\fmaj_{<_{\St}}$ the right-hand side of the above equation. It is also true that \cref{eq: fmaj distribution} holds if we replace $<_c$ by $<_{\ell}$ as the authors remark in \cite[Propostion~7.1]{BC12} and \cite[Remark~2.2]{BZ11}. Thus, $\fmaj_<$ for all $< \in \{<_c, <_{\St}, <_\ell\}$ on $\Om_{n, r}$ can be called Mahonian statistics on $r$-colored permutations. In the applications we will use the color order.

Now that we have explained what it means for a colored permutation statistic to be Eulerian and Mahonian, we discuss Euler--Mahonian pairs of statistics. The problem of extending the notion of Euler--Mahonian distribution to colored permutation groups was first proposed by Foata \cite[Problem~1.1]{ABR01} for the case $r=2$, of the hyperoctahedral group $\fB_n$.  Biagioli and Caselli \cite[Corollary~5.3~for~$s = p = 1$]{BC12} prove, in the more general setting of projective reflection groups, the following Euler--Mahonian identity
\begin{equation}
\label{eq: (des, fmaj) color}
\sum_{m \ge 0} \, [rm + 1]_q^n \, x^m \ = \ \frac{\sum_{w \in \fS_{n, r}} \, x^{\des_{<_c}(w)} q^{\fmaj_{<_c}(w)}}{(x; q^r)_{n +1}}.
\end{equation}
Equation \eqref{eq: (des, fmaj) color} reduces to Equations \eqref{eq: Carlitz} and \eqref{eq: Steingrimsson} for $r=1$ and $q=1$, respectively and generalizes a formula of Chow and Gessel \cite[Theorem~3.7]{CG07} for $r=2$. Chow and Mansour \cite[Theorem~9~(iv)]{CM11} prove that the above identity holds if we replace $<_c$ with $<_{\St}$. It is worth mentioning that, in the case $r=2$, Chow--Mansour's identity was first noticed by Biagioli and Zeng \cite[Section~3]{BZ10}. In a subsequent paper, the latter showed \cite[Proposition~8.1]{BZ11} that it also holds if we replace $<_c$ by $<_{\ell}$. Furthermore, they prove \cite[Equation~(8.1)~for~$a = 1$]{BZ11} the following identity 
\begin{equation}
\label{eq: (des, maj) length}
\sum_{m \ge 0} \, ([m+1]_q + (r-1)[m]_q)^n \, x^m \ = \ \frac{\sum_{w \in \fS_{n, r}} \, x^{\des_{<_{\ell}}(w)} q^{\maj_{<_{\ell}}(w)}}{(x; q)_{n +1}}
\end{equation}
which reduces to Equations \eqref{eq: Carlitz} and \eqref{eq: Steingrimsson} for $r=1$ and $q =1$, respectively and generalizes a formula of Chow and Gessel \cite[Equation~(26)]{CG07} for $r=2$.

For a colored permutation $w$ and a total order on $\Om_{n, r}$, let $\Des_<^*(w) := \{i \in [n-1] : w(i) > w(i+1)\}$ and write $\des_<^*(w)$ for its cardinality. The $*$-descent set is often called type $A$ descent set because it does not take into account the descents in positions $0$ or $n$ (see, for example, \cite[Definition~5.4]{BB13}. Bagno and Biagioli \cite{BB07} defined the  \emph{flag~descent~number}
\[
\fdes_{<_c}(w) \ := \ r \des_{<_c}^*(w) + c_1
\]
of a colored permutation  $w = w(1)^{c_1}w(2)^{c_2}\cdots w(n)^{c_n} \in \fS_{n, r}$, generalizing a notion first introduced by Adin, Brenti and Roichman \cite[Section~4]{ABR01} for the hyperoctahedral group $\fB_n$. They proved \cite[Theorem~A.~1]{BB07} that
\begin{equation}
\label{eq: (fdes, fmaj) color}
\sum_{m \ge 0} \, [m+1]_q^n \, x^m \ = \ \frac{\sum_{w \in \fS_{n, r}} \, x^{\fdes_{<_c}(w)} q^{\fmaj_{<_c}(w)}}{(1-x)(1 - x^r q^r)(1 - x^r q^{2r}) \cdots (1 - x^r q^{nr})},
\end{equation}
which generalizes Adin, Brenti and Roichman's formula \cite[Theorem~4.2]{ABR01} for $r=2$.

For the sake of completeness, we remark that Bagno \cite{Bag04} introduced an Eulerian partner to his $\lmaj$ statistic, defined by
\[
\ldes(w) \ := \ \des_{<_{\ell}}^*(w) + \csum(w),
\]
 for every $w \in \fS_{n, r}$ and proved the following Euler--Mahonian identity
\begin{equation}
\label{eq: bagno}
\sum_{m \ge 0} \, [m+1]_q^n \, x^m \ = \ \frac{\sum_{w \in \fS_{n, r}} \, x^{\ldes(w)} q^{\lmaj(w)}}{(x; q)_{n +1} (-x[r-1]_{qx}; q)_{n +1}}.
\end{equation}
Furthermore, Bagno introduced an Euler--Mahonian pair $(\ndes, \nmaj)$ on colored permutations, which satisfies Equation \eqref{eq: (fdes, fmaj) color}. This serves as a generalization of the $\text{\textquotedblleft{negative\textquotedblright}}$ statistics first considered by Adin, Brenti and Roichman \cite[Section~3]{ABR01}  for the hyperoctahedral group $\fB_n$.

%
%%
%%%
%%%%
%%%%%----------------------COLORED~QSYM
%%%%
%%%
%%
%
\subsection{Colored quasisymmetric functions}
\label{subsec: quasi}

	The colored analogue of the fundamental quasisymmetric functions that we are going to use was introduced by Poirier \cite[Section~3]{Poi98} and further studied in \cite{BaH08,BeH06,HP10}. Different type $B$ generalizations of quasisymmetric functions have been suggested. See, for example, \cite[Section~2]{Pet05} for a brief account of Chow's type $B$ quasisymmetric functions.  Poirier's signed quasisymmetric functions were recently employed by Adin et al. \cite{AAER17} in order to provide a signed analogue of the concept of fine characters and fine sets of Adin and Roichman \cite{AR15} to the hyperoctahedral group $\fB_n$. This fact provided much of the motivation behind this paper in choosing  Porier's  fundamental colored quasisymmetric function to specialize.

	Let $\bx^{(j)} = (x_1^{(j)}, x_2^{(j)}, \dots)$, for every $0 \le j \le r-1$, be sequences of commuting indeterminates. We consider formal power series in $\bX^{(r)} := (x_i^{(0)}, x_i^{(1)}, \dots, x_i^{(r-1)})_{i \ge 1}$. The \emph{fundamental~colored~quasisymmetric~function} associated to $w = w(1)^{c_1}w(2)^{c_2}\cdots w(n)^{c_n} \in \fS_{n, r}$, written $F_{w, <}(\bX^{(r)})$,  is defined as follows
\begin{equation}
\label{eq: fundamental colored quasi}
F_{w, <} (\bX^{(r)}) \ := \ 
\sum_{\substack{i_1 \ge i_2 \ge \cdots \ge i_n \ge 1 \\ j \in \Des_<^*(w) \ \toto \  i_j > i_{j+1}}} \, x_{i_1}^{(c_1)} x_{i_2}^{(c_2)} \cdots x_{i_n}^{(c_n)},
\end{equation}
for any total order $<$ on $\Om_{n, r}$. This definition is slightly different, but equivalent to, the one given in \cite[Section~3.2]{HP10} with the inequalities being reversed for $< = <_c$, the color order (see also \cite[Equation~(2.9)]{AAER17} for the case $r=2$).

We extend this definition to standard Young $r$-partite tableaux, for which we mostly follow the exposition of \cite{BB07}. An \emph{$r$-partite~partition} of a nonnegative integer $n$ is any $r$-tuple $\bl = (\lambda^{(0)}, \dots, \lambda^{(r-1)})$ of (possibly empty) integer partitions of total sum $n$. We write $\bl \vdash n$ for $r$-partite partitions. A \emph{standard~Young~$r$-partite~tableau} of shape $\bl = (\lambda^{(0)}, \dots, \lambda^{(r-1)}) \vdash n$ is an $r$-tuple $\bQ = ( Q^{(0)}, \dots, Q^{(r-1)})$ of tableaux which are strictly increasing along rows and columns such that $Q^{(i)}$ has shape $\lambda^{(i)}$, for all $0 \le i \le r-1$ and every element of $[n]$ appears exactly once as an entry of $Q^{(i)}$ for some $0 \le i \le r-1$. These tableaux are called \emph{parts} of $\bQ$. Let $\SYT(\bl)$ (resp. $\SYT_{n, r}$) be the set of all standard Young $r$-partite tableaux of shape $\bl \vdash n$ (resp. of any shape and size $n$).

For $\bQ = ( Q^{(0)}, \dots, Q^{(r-1)}) \in \SYT_{n, r}$, an integer $i \in [0, n-1]$ is called a \emph{descent} of $\bQ$, if
\begin{itemize}
\item $i$ and $i+1$ belong in the same part of $\bQ$ and $i+1$ appears in a lower row than $i$ does, or
\item $i \in Q^{(j)}$ and $i +1 \in Q^{(k)}$, for some $0 \le j < k \le r-1$, or
\item  $i = 0$ and $1$ appears in $Q^{(j)}$ for some $j \neq 0$.
\end{itemize}
The set of all descents of $\bQ$, written $\Des(\bQ)$, is called the \emph{descent set} of $\bQ$. The cardinality and the sum of all elements of $\Des(\bQ)$ are written $\des(\bQ)$ and $\maj(\bQ)$, respectively and called the \emph{descent number} and \emph{major index} of $\bQ$. Also, let $\Des^*(\bQ)$ be the set obtained from $\Des(\bQ)$ by removing the zero, if present.

We recall that the Robinson--Schensted correspndence is a bijection from the symmetric group $\fS_n$ to the set of pairs $(P, Q)$ of standard Young tableaux of the same shape and size $n$. It has the property \cite[Lemma~7.23.1]{StaEC2} that $\Des(w) = \Des(Q)$ and $\Des(w^{-1}) = \Des(P)$, where $(P, Q)$ is the pair of tableaux associated to $w \in \fS_n$. The Robinson--Schensted correspondence has a natural colored analogue, first considered by White \cite[Corollary~9~and~Remark~11]{Whi83} and further studied by Stanton and White \cite{SW85}. It is a bijection from the $r$-colored permutation group $\fS_{n, r}$ to the set of pairs $(\bP, \bQ)$ of standard Young $r$-partite tableaux of the same shape and size $n$. It has the property \cite[Proposition~6.2]{BRS08}, \cite[Lemma~5.2]{APR10} that $\Des_{<_c}(w) = \Des(\bQ)$ and $\Des_{<_c}(\ol{w}^{-1}) = \Des(\bP)$, where $(\bP, \bQ)$ is the pair of $r$-partite tableaux associated to $w \in \fS_{n, r}$ and  $\ol{w}$ is the colored permutation with underlying permutation $w$ and color vector $(-c_1, -c_2, \dots, -c_n)$, where the entries are computed modulo $r$. For the case $r=2$, we refer the reader to \cite[Section~5]{AAER17} and references therein.

The fundamental colored quasisymmetric function associated to a tableau $\bQ$ is then defined by the right-hand side of \cref{eq: fundamental colored quasi}, with $w$ replaced by $\bQ$ and $\Des_<^*(w)$ replaced by $\Des^*(\bQ)$. A signed analogue of the following well known expansion \cite[Theorem~7.19.7]{StaEC2}
\begin{equation}
\label{eq: schurtofun}
s_{\lambda} (\bx) = \sum_{Q \in \SYT(\lambda)} \, F_{n, \Des(Q)} (\bx)
\end{equation} 
was recently proved by Adin et al. \cite[Proposition~4.2]{AAER17}. The following colored analogue 
\begin{equation}
\label{eq: schurtocoloredfun}
s_\bl (\bX^{(r)}) \ = \ \sum_{\bQ \in \SYT(\bl)} \, F_{\bQ} (\bX^{(r)})
\end{equation}
of \cref{eq: schurtofun}, where 
\[
s_\bl (\bX^{(r)}) := s_{\lambda^{(0)}}(\bx^{(0)}) s_{\lambda^{(1)}}(\bx^{(1)}) \cdots s_{\lambda^{(r-1)}}(\bx^{(r-1)}),
\]
for every $r$-partite partition $\bl = (\lambda^{(0)}, \lambda^{(1)}, \dots, \lambda^{(r-1)})$ follows from a trivial generalization of the proof of \cite[Proposition~4.2]{AAER17} and will be used in the sequel.

%
%%
%%%
%%%%
%%%%%----------------------FROBENIUS
%%%%
%%%
%%
%
\subsection{Representation theory; the characteristic map}
\label{subsec: reptheory}

	We denote by $\Lambda(\bx)$ the $\CC$-algebra of symmetric functions in $\bx$. For the representation theory of the symmetric group and its connection to symmetric functions, we refer to \cite[Section~7.18]{StaEC2} and only recall that the \emph{Frobenius~characteristic~map}, written $\ch$, is a $\CC$-linear isomorphism from the space of virtual $\fS_n$-representations to that of homogeneous symmetric functions of degree $n$, with the property that $\ch (\chi^{\lambda}) (\bx) = s_{\lambda}(\bx)$, where $\chi^{\lambda}$ is the irreducible $\fS_n$-character corresponding to $\lambda \vdash n$. The map $\ch$ has a natural colored analogue \cite[Appendix~B]{MacSFHP}, which we now describe. We mostly follow the exposition of \cite[Sections~5~and~6]{BB07} and \cite[Section~2]{Poi98}.

	Conjugacy class in $\fS_{n, r}$, and therefore irreducible $\fS_{n, r}$-characters, are in one-to-one correspondence with $r$-partite partitions of $n$. We describe the cycle type of a colored permutation. Starting with a colored permutation, we first form the cycle decomposition of the underlying permutation and then provide the entries with their original color, forming \emph{colored~cycles}. Then, define the color of a colored cycle to be the sum of colors of all its entries computed modulo $r$. Now, the cycle type of $w \in \fS_{n, r}$, written $\ct(w)$, is the $r$-partite partition of $n$, whose $j$-th part is the integer partition formed by the lengths of the colored cycles of  $w$ having color $j$, for every $0 \le j \le r-1$. Two colored permutations are conjugate \cite[Proposition~1]{Poi98} if and only if they have the same cycle type.

	Fix $\zeta$ a primitive $r$-th root of unity. For a nonnegative integer $k$ and any $0 \le j \le r-1$, let
\[
p_k^{(j)} (\bX^{(r)}) \ := \ p_k (\bx^{(0)}) + \zeta^j p_k (\bx^{(1)}) + \cdots + \zeta^{j(r-1)} p_k (\bx^{(r-1)}).
\]
Also, for an integer partition $\lambda = (\lambda_1, \lambda_2, \dots) \vdash n$ and any $0 \le j \le r-1$, define the homogeneous element 
\[
p_\lambda^{(j)} (\bX^{(r)}) \ := \ p_{\lambda_1}^{(j)} (\bX^{(r)}) p_{\lambda_2}^{(j)} (\bX^{(r)})\cdots 
\]
of degree $n$ of $\Lambda(\bx^{(0)}) \otimes \Lambda(\bx^{(1)}) \otimes \cdots \otimes \Lambda(\bx^{(r-1)})$. The \emph{colored~power~sum~symmetric~function} associated to a $r$-partite partition $\bl = (\lambda^{(0)}, \dots, \lambda^{(r-1)}) \vdash n$, is defined by
\[
p_{\bl} (\bX^{(r)}) \ := \  p_{\lambda^{(0)}}^{(0)} (\bX^{(r)}) p_{\lambda^{(1)}}^{(1)} (\bX^{(r)}) \cdots p_{\lambda^{(r-1)}}^{(r-1)} (\bX^{(r)}). 
\]

	The \emph{Frobenius~characteristic~map} of a complex, finite-dimensional $\fS_{n, r}$-character $\chi$ is defined by 
\begin{equation}
\label{eq: FrobDef}
\ch_r (\chi) (\bX^{(r)}) \ := \ \frac{1}{r^nn!} \sum_{w \in \fS_{n, r}} \, \chi(w) p_{\ct(w)} (\bX^{(r)}). 
\end{equation}
The map $\ch_r$ is a $\CC$-linear isomorphism from the space of virtual $\fS_{n, r}$-characters to the degree $n$ homogeneous part of $\Lambda(\bx^{(0)}) \otimes \Lambda(\bx^{(1)}) \otimes \cdots \otimes \Lambda(\bx^{(r-1)})$. It has the property that $\ch_r (\chi^{\bl}) (\bX^{(r)}) \ = \ s_\bl (\bX^{(r)})$, where $\chi^{\bl}$ is the irreducible $\fS_{n, r}$-character associated to $\bl \vdash n$.

	For a subset $\cC \subseteq \fS_{n, r}$ and a total order $<$ on $\Om_{n, r}$,  let 
\[
F_< (\cC; \bX^{(r)}) \ := \ \sum_{w \in \cC} \, F_{w, <}(\bX^{(r)})
\]
be the colored quasisymmetric generating function associated to $\cC$. The following observation, which appears in \cite[Proposition~1.13]{PoiThesis} in a more general setting, is a generalization of Equation \eqref{eq: F(Sn,x)} and will be used in \cref{sec: applications}. It is the key that allows us to pass from general formulas to Euler--Mahonian identities by appropriately specializing it. We record it here with a proof. 
\begin{lemma}
\label{lem: coloredhelp}
For a nonnegative integer $n$, we have
\begin{equation}
\label{eq: F(Snr)}
F_{<_c} (\fS_{n, r}; \bX^{(r)}) \ = \ (x_1^{(0)} + x_2^{(0)} + \cdots + x_1^{(r-1)} + x_2^{(r-1)} + \cdots)^n.
\end{equation}
\end{lemma}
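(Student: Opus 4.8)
The plan is to show that the right-hand side, which is the $n$-th power of the sum of all variables $x_i^{(j)}$ over all $i \ge 1$ and all colors $0 \le j \le r-1$, expands as a sum of monomials indexed by colored words, and then to partition these monomials according to the standardization of the underlying word, matching each class to a single fundamental colored quasisymmetric function $F_{w, <_c}(\bX^{(r)})$ for a unique $w \in \fS_{n, r}$.

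\textbf{Setup and expansion.} First I would expand the right-hand side as
\[
\Bigl( \sum_{i \ge 1} \sum_{j=0}^{r-1} x_i^{(j)} \Bigr)^n \ = \ \sum x_{i_1}^{(c_1)} x_{i_2}^{(c_2)} \cdots x_{i_n}^{(c_n)},
\]
where the sum runs over all sequences of pairs $((i_1, c_1), \dots, (i_n, c_n))$ with each $i_k \ge 1$ and each $c_k \in [0, r-1]$. The goal is to reorganize this sum so that monomials are grouped into the defining sums of the $F_{w, <_c}$. A monomial as above records, at each position $k$, an integer part $i_k$ and a color $c_k$; the color vector $(c_1, \dots, c_n)$ is already a valid color vector for an element of $\fS_{n, r}$, so what remains is to read off an underlying permutation and verify the descent condition in \eqref{eq: fundamental colored quasi}.

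\textbf{Standardization.} The heart of the argument is a standardization map. Given an arbitrary monomial with exponent data $(i_k, c_k)_{k=1}^n$, I would sort the positions $k \in [n]$ by the color order $<_c$ applied to the pairs: that is, compare two positions first by the rule that a larger color is smaller in $<_c$ (reading off the structure $1^{r-1} <_c \cdots <_c n^0$ of the color order), and break ties by position. More precisely, I would define the underlying permutation $w \in \fS_n$ so that position $k$ receives the rank of the pair $(i_k, c_k)$ under a consistent tie-breaking convention, producing a unique $w \in \fS_{n, r}$ with the prescribed color vector. The key calculation is then to check that, for this $w$, the condition $i_1 \ge i_2 \ge \cdots \ge i_n$ together with the strict inequalities $i_j > i_{j+1}$ exactly at $j \in \Des^*_{<_c}(w)$ characterizes the monomials assigned to $w$. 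Since $\Des^*_{<_c}(w)$ records the positions where $w(j) >_c w(j+1)$ in the underlying permutation ignoring positions $0$ and $n$, the weak/strict pattern on the $i_k$'s must be arranged so that each monomial is counted once and only once.

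\textbf{Main obstacle and conclusion.} The step I expect to be most delicate is pinning down the tie-breaking convention in the standardization so that it is genuinely a bijection between all monomials of the right-hand side and the disjoint union over $w \in \fS_{n, r}$ of the monomials appearing in $F_{w, <_c}(\bX^{(r)})$; I must ensure that the direction of the inequalities in \eqref{eq: fundamental colored quasi} (reversed relative to the usual convention, as the paper emphasizes after its statement) is consistent with the color order so that the weak chain $i_1 \ge \cdots \ge i_n$ with equalities forbidden precisely at descents partitions the full set of exponent sequences. Once this bijection is established, summing the monomials groups them exactly as $\sum_{w \in \fS_{n, r}} F_{w, <_c}(\bX^{(r)}) = F_{<_c}(\fS_{n, r}; \bX^{(r)})$, which is the left-hand side, completing the proof. (The $n = 0$ case is the empty product equal to $1$, matching $F_{0, \emptyset} = 1$, and can be dispatched at the start.)
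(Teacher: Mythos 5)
Your route is genuinely different from the paper's. The paper's proof is algebraic: it evaluates the Frobenius formula \eqref{eq: colorFrobeniusformula} at the conjugacy class $(1^n, \varnothing^{r-1})$, identifies $\chi^{\bl}(1^n, \varnothing^{r-1})$ with $|\SYT(\bl)|$, expands each Schur function via \eqref{eq: schurtocoloredfun}, and finishes with the colored Robinson--Schensted correspondence. A direct expansion-plus-standardization argument of the kind you propose can indeed be carried out (it is essentially Poirier's proof, cited just before the lemma) and is more elementary. However, as written, your construction contains a step that fails.

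The flaw is your prescription that the standardized $w$ keep ``the prescribed color vector'' $(c_1, \dots, c_n)$ of the word, together with taking the color as the primary comparison key. In $F_{w, <_c}$ the colors are pinned positionally along a \emph{weakly decreasing} index chain, so when a word's indices are out of order the colors must be transported along the sorting: the standardized $w$ must carry the colors of the \emph{sorted} word, not of the original one. Concretely, take $r = 2$, $n = 2$, and the commutative monomial $x_5^{(0)} x_5^{(1)}$, which has coefficient $2$ on the right-hand side (the two orderings $(5^0, 5^1)$ and $(5^1, 5^0)$). Every $w$ with color vector $(0,1)$ satisfies $w(1)^0 >_c w(2)^1$, since in the color order every letter of color $0$ exceeds every letter of color $1$; hence $1 \in \Des^*_{<_c}(w)$ and $F_{w, <_c}$ contains only terms with $i_1 > i_2$, so it never contains $x_5^{(0)} x_5^{(1)}$. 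That monomial occurs precisely in $F_{1^1 2^0, <_c}$ and $F_{2^1 1^0, <_c}$, both with color vector $(1, 0)$; so the word $(5^0, 5^1)$ must be assigned to a $w$ whose color vector differs from the word's, contradicting your prescription --- and the mismatch persists in aggregate: the words with positional colors $(0,1)$ have generating function $\bigl(\sum_a x_a^{(0)}\bigr)\bigl(\sum_b x_b^{(1)}\bigr)$, while $\sum_{w} F_{w,<_c}$ over the two $w$ with color vector $(0,1)$ equals $2\sum_{i_1 > i_2} x_{i_1}^{(0)} x_{i_2}^{(1)}$. Your color-first comparison fails for the same reason: it would place every word $(a^0, b^1)$ in the fiber of the single permutation $2^0 1^1$ and leave the fiber of $1^0 2^1$ empty, although $F_{1^0 2^1, <_c} \neq 0$. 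The repair is standard: sort primarily by the index $i_k$, weakly decreasing, carrying colors along, so that the color vector of $w$ is the color sequence of the sorted word; within a constant-index block arrange colors with larger color numbers first (these are $<_c$-smaller, so no descent is forced at equal indices), break remaining ties among identical colored letters left to right, and let the underlying permutation record the original positions. With that convention the fibers of standardization are exactly the term sets of the $F_{w, <_c}$, and your conclusion follows.
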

\begin{proof}
	Recall from \cite[Theorem~6.1]{BB07}, the Frobenius formula 
\begin{equation}
\label{eq: colorFrobeniusformula}
p_\bm (\bX^{(r)})\ = \ \sum_{\bl \vdash n} \, \chi^{\bl} (\bm) s_\bl (\bX^{(r)}), 
\end{equation}
for $\fS_{n, r}$, where $\chi^{\bl} (\bm)$ is the irreducible $\fS_{n, r}$-character associated to $\bl$ computed in the conjugacy class which corresponds to $\bm \vdash n$. Notice that the conjugacy class of the identity element in $\fS_{n, r}$ corresponds to the $r$-partite partition of $n$, whose part of color $0$ is $(1^n)$ and all the other parts are the empty partitions, written $(1^n, \varnothing^{r-1})$, and so the left-hand side of \cref{eq: colorFrobeniusformula} is equal to
\[
p_{(1^n, \varnothing^{r-1})} (\bX^{(r)}) \ = \ p_{(1^n)}^{(0)} (\bX^{(r)}) \ = \ (p_1^{(0)} (\bX^{(r)}) )^n \ = \ (p_1 (\bx^{(0)}) + \cdots + p_1 (\bx^{(r-1)}))^n,
\]
which is exactly the right-hand side of Equation \eqref{eq: F(Snr)}. Furthermore, $\chi^{\bl} (1^n, \varnothing^{r-1}) = \dim_{\CC} (\chi^{\bl})$, which is known to equal the number of $r$-partite standard Young tableaux of shape $\bl$. Therefore, \cref{eq: colorFrobeniusformula} for $\bm = (1^n, \varnothing^{r-1})$ becomes
\[
p_{(1^n, \varnothing^{r-1})} (\bX^{(r)}) = \sum_{\bl \vdash n} \, \sum_{\bP, \bQ \in \SYT(\bl)} \, F_{\bQ} (\bX^{(r)}),
\]
using the expansion \eqref{eq: schurtocoloredfun}. This in turn is exactly the left-hand side of Equation \eqref{eq: F(Snr)} by the colored Robinson--Schensted correspondence and its properties. The proof follows by combining the two calculations.
\end{proof} 
%

%
%%
%%%
%%%%
%%%%%----------------------SPECIALIZATIONS
%%%%
%%%
%%
%
\section{Specializations of colored quasisymmetric functions}
\label{sec: special}

	This section derives general formulas for Mahonian and Euler-Mahonian distributions on colored permutation groups by specializing colored quasisymmetric functions. Let $\CC[[\bX^{(r)}]]$ be the ring of formal power series in $\bX^{(r)}$. Formally, a \emph{specialization} is a ring homomorphism $\CC[[\bX^{(r)}]] \to \CC[[q]]$ or $\CC[[\bX^{(r)}]] \to \CC[q]$. In this paper, we consider specializations that arise from substituting powers of $q$ for the variables $x_i^{(j)}$, for $0 \le j \le r-1$. 
	%Essentially, we consider $r$-partite versions of the stable principal specialization and principal specialization of order $m$ and variations of these.

	Fix a total order $<$ on $\Om_{n, r}$ and define the descent set of a colored permutation $w = w(1)^{c_1}w(2)^{c_2}\cdots w(n)^{c_n} \in \fS_{n, r}$, written $\Des_<(w)$, to be the set of all indices $i \in [n-1]$, such that $w(i) > w(i+1)$, together with $0$, whenever $c_1 \neq 0$, and $\des_<(w)$ its cardinality. Let $\Des_<^*(w)$ be the set obtained from $\Des_<(w)$ by removing zero, if present, and let $\des_<^*(w)$ be its cardinality. Also, define the major index of $w$, written $\maj_<(w)$, to be the sum of all elements of $\Des_<(w)$ and the flag major index of $w$ as $\fmaj_<(w) := r\maj_<(w) + \csum(w)$. The results of this section are valid for every total order on $\Om_{n, r}$ and therefore we omit the total order subscript in colored statistics and colored quasisymmetric functions. 

	We begin with two specializations and a variation which reduce to the stable specialization and the principal specialization of order $m$ for $r=1$. Let $\ps^{(r)}$ be the specialization defined by the substitutions $x_i^{(j)} = q^{i -1}$, for every $0 \le j \le r-1$ and $i \ge 1$. Also, define the specialization $\ps_m^{(r)}$ by the substitutions $x_i^{(0)} = q^{i - 1}$, for every $1 \le i \le m$ and $x_i^{(j)} = q^{i -1}$, for every $1 \le i \le m-1$ and $1 \le j \le r-1$  and zero elsewhere. In fact, $\ps^{(r)}$ is the stable principal specialization on all $\bx^{(0)}, \bx^{(1)}, \dots, \bx^{(r-1)}$ and $\ps_m$ is the principal specialization of order $m$ on $\bx^{(0)}$ and of order $m-1$ on $\bx^{(1)}, \bx^{(2)}, \dots, \bx^{(r-1)}$. Furthermore, let $\widetilde{\ps}_m^{(r)}$ be the principal specialization of order $m$ on all $\bx^{(0)}, \bx^{(1)}, \dots, \bx^{(r-1)}$. 
\begin{theorem}
\label{thm: Icolor}
For a positive integer $n$ and every $w \in \fS_{n, r}$, we have 
\begin{align}
\ps^{(r)} (F_w) \ &= \ \frac{q^{\maj(w)}}{(q)_n} \label{eq: Icolormaj}\\ 
\sum_{m \ge 1} \, \ps_m^{(r)} (F_w) \, x^{m-1} \ &= \ \frac{x^{\des(w)} q^{\maj(w)}}{(x; q)_{n +1}} \label{eq: Icolordesmaj}\\
\sum_{m \ge 1} \, \widetilde{\ps}_m^{(r)} (F_w) \, x^{m-1} \ &= \ \frac{x^{\des^*(w)} q^{\maj(w)}}{(x; q)_{n +1}}. \label{eq: Icolordes*maj}
\end{align}
\end{theorem}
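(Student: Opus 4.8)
The plan is to treat all three identities by a single mechanism. Each of $\ps^{(r)}$, $\widetilde{\ps}_m^{(r)}$ and $\ps_m^{(r)}$ sends a colored monomial $x_{i_1}^{(c_1)}\cdots x_{i_n}^{(c_n)}$ to $q^{(i_1-1)+\cdots+(i_n-1)}$ whenever every $i_k$ lies in the range on which the corresponding variable is not set to zero, and to $0$ otherwise. Writing $S:=\Des^{*}(w)$, the defining sum \eqref{eq: fundamental colored quasi} therefore specializes to a sum of $q^{\sum_k(i_k-1)}$ over weakly decreasing sequences $i_1\ge\cdots\ge i_n\ge 1$ that are \emph{strictly} decreasing at the indices lying in $S$, subject only to upper bounds on the $i_k$: no bound for $\ps^{(r)}$; the uniform bound $i_k\le m$ for $\widetilde{\ps}_m^{(r)}$; and the color-sensitive bound $i_k\le m-\chi(c_k\ne 0)$ for $\ps_m^{(r)}$. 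I also record once and for all that $\ssum(S)=\maj(w)$, since adjoining $0$ to a descent set leaves its sum unchanged, and that $|S|=\des^{*}(w)$.

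The engine of all three computations is the $P$-partition substitution $a_k:=i_k-i_{k+1}$ for $1\le k<n$ and $a_n:=i_n$, under which the constraints become $a_k\ge 0$ with $a_k\ge 1$ precisely for $k\in S\cup\{n\}$, while $\sum_k i_k=\sum_k k\,a_k$ and $i_1=\sum_k a_k$. For $\ps^{(r)}$ the unbounded sum factors into a product of geometric series, one in each $q^l$, which yields the right-hand side of \eqref{eq: Icolormaj} after using $\ssum(S)=\maj(w)$. For the two bounded specializations I would instead pass to the generating function in $x$: a fixed admissible sequence contributes to the coefficient of $x^{m-1}$ for every modulus $m\ge N$, where $N$ is the least admissible value of $m$, so its total contribution is $q^{\sum_k(i_k-1)}\,x^{N-1}/(1-x)$. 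For $\widetilde{\ps}_m^{(r)}$ one has $N=i_1$, and the same factorization (now in the variables $q^l x$) produces \eqref{eq: Icolordes*maj}.

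All the real work sits in \eqref{eq: Icolordesmaj}, where the color-sensitive bound gives $N=\max_k\bigl(i_k+\chi(c_k\ne 0)\bigr)$ rather than $i_1$. The key lemma I would establish is that this maximum collapses,
\[
N \;=\; i_1+\chi(c_1\ne 0).
\]
Granting it, factoring out $x^{\chi(c_1\ne 0)}$ expresses the $x$-generating function of $\ps_m^{(r)}(F_w)$ as $x^{\chi(c_1\ne 0)}$ times that of $\widetilde{\ps}_m^{(r)}(F_w)$, so \eqref{eq: Icolordesmaj} follows from \eqref{eq: Icolordes*maj} and the identity $\des(w)=\des^{*}(w)+\chi(c_1\ne 0)$. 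To prove the lemma it is enough to check that each index $k$ with $c_k\ne 0$ satisfies $i_k+1\le i_1+\chi(c_1\ne 0)$: for $k=1$ this is immediate, and for $k\ge 2$ it reduces to $i_k<i_1$, i.e.\ to the existence of a strict descent of the sequence strictly before position $k$. This is exactly the point at which the colors and the descent set must interact, and it is where the chosen order matters: in the color order a nonzero-colored entry is always exceeded by its predecessor (color $0$ is the largest), so the first nonzero color is preceded by an index of $S$, forcing $i_k<i_1$ for every $k\ge 2$ with $c_k\ne 0$. I expect this collapse of the per-coordinate caps to a single effective cap on $i_1$ to be the main obstacle; once it is in hand, the remaining steps are the same geometric-series evaluations already used for the other two identities.
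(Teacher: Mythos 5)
Your proof is correct, and its computational engine is the same as the paper's: specialize the defining sum \eqref{eq: fundamental colored quasi}, pass to the difference variables $a_k$, and evaluate geometric series. Structurally, though, you route the bounded case differently, and the detour buys something. The paper proves \eqref{eq: Icolormaj} and \eqref{eq: Icolordesmaj} in parallel, encoding the bound by an auxiliary $i_0 := m$ with a strict drop at $0$ exactly when $0 \in \Des(w)$, and it disposes of the crucial point in a single unproved sentence (``substitutions $x_m^{(1)}, \dots, x_m^{(r-1)}$ occur only if $c_1 \neq 0$''). Your cap-collapse lemma $N = i_1 + \chi(c_1 \neq 0)$ is exactly that sentence made precise, and your argument for it --- when $c_1 = 0$, the first position of nonzero color is preceded by a $<_c$-descent, since zero-colored letters are $<_c$-largest, forcing $i_k < i_1$ --- is the justification the paper omits; factoring out $x^{\chi(c_1 \neq 0)}$ then derives \eqref{eq: Icolordesmaj} cleanly from \eqref{eq: Icolordes*maj} instead of redoing the substitution. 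Isolating the lemma also surfaces a point on which the paper overreaches: the preamble of Section 3 claims the results for \emph{every} total order on $\Om_{n, r}$, but \eqref{eq: Icolordesmaj} needs precisely the property your lemma uses, namely that nonzero-colored letters lie below all zero-colored ones (true for $<_c$ and $<_\ell$, false for $<_{\St}$). Indeed, take $r = n = 2$, $w = 1^0 2^1$, and the order $<_{\St}$ with the Section 3 convention that $0 \in \Des(w)$ iff $c_1 \neq 0$: then $\Des(w) = \emptyset$, yet $\ps_1^{(2)}(F_w) = 0$ because every monomial of $F_w$ contains a variable $x_i^{(1)}$ and these all vanish under $\ps_1^{(2)}$, while the coefficient of $x^0$ on the right-hand side of \eqref{eq: Icolordesmaj} is $1$. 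So your proof establishes the theorem in the generality in which it is actually true and used in the applications (the color order), and makes explicit the step on which it hinges; the rest of your computation (the identities $\ssum(S) = \maj(w)$, $N = i_1$ for $\widetilde{\ps}_m^{(r)}$, and the factorization in the variables $q^k x$) matches the paper's and is accurate.
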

\begin{proof}
	We prove Equations \eqref{eq: Icolormaj} and \eqref{eq: Icolordesmaj} in parallel. Equation \eqref{eq: Icolordes*maj} follows in a similar way. For a colored permutation $w$ with color vector $(c_1, c_2, \dots, c_n)$, we have
\begin{align}
 \ps^{(r)} (F_w)  \ &= \ \sum_{\substack{i_1 \ge i_2 \ge \cdots \ge i_n \ge 1 \\ j \in \Des^*(w) \ \toto \ i_j > i_{j+1}}}  \, q^{i_1 + i_2 + \cdots + i_n - n} \label{eq: Ia}\\
 \ps_m^{(r)} (F_w) \ &= \ \sum_{\substack{m := i_0 \ge i_1 \ge i_2 \ge \cdots \ge i_n \ge 1 \\ j \in \Des(w) \ \toto \ i_j > i_{j+1}}} q^{i_1 + i_2 + \cdots + i_n - n}. \label{eq: Ib}
 \end{align}
Under the specialization $\ps_m^{(r)}$, substitutions $x_m^{(1)},$ $x_m^{(2)}, \dots, x_m^{(r-1)}$ occur only if $c_1 \neq 0$, which in turn is exactly when 0 is considered a descent of $w$, explaining the first inequality under the sum on the right-hand side of \cref{eq: Ib}. Define 
\begin{align*}
i_j' \ &= \ i_j - \chi_j - \cdots - \chi_{n-1} \\
i_n' \ &= \ i_n,
\end{align*}
where $\chi_j := \chi (j \in \Des(w))$, for every $0 \le j \le n-1$.  Then, Equations \eqref{eq: Ia} and \eqref{eq: Ib} become
\begin{align}
 \ps^{(r)} (F_w)  \ &= \ \sum_{i_1' \ge i_2' \ge \cdots \ge i_n' \ge 1} \, q^{i_1' + i_2' + \cdots + i_n' - n + \maj(w)} \label{eq: Iaa} \\
 \ps_m^{(r)} (F_w)  \ &= \ \sum_{m - \des(w) \ge i_1' \ge i_2' \ge \cdots \ge i_n' \ge 1} \, q^{i_1' + i_2' + \cdots + i_n' - n + \maj(w)}, \label{eq: Ibb} 
\end{align}
because 
\begin{align*}
\des (w) \ &= \ \sum_{j = 0}^{n-1} \, \chi_j \\
\maj(w) \ &= \ \sum_{j = 1}^{n-1} \, j \chi_j. 
\end{align*}
Now, let
\begin{align*}
a_0 \ &= \ m - \des (w) - i_1' \\
a_j \ &= \ i_j' - i_{j+1}' \\
a_n \ &= \ i_n' - 1,
\end{align*}
for every $1 \le j \le n-1$. On the one hand, \cref{eq: Iaa} becomes 
\[
\ps^{(r)} \ = \ \sum_{a_1, a_2, \dots, a_n \in \NN} \, q^{a_1 + 2a_2 + \cdots + na_n + \maj(w)} \ = \ \frac{q^{\maj(w)}}{(q)_n}. 
\]
On the other hand, \cref{eq: Ibb} becomes 
\[
\ps_m^{(r)} (F_w) \ = \ 
\sum \,  q^{a_1 + 2a_2 + \cdots + na_n + \maj(w)}
\]
where the sum runs through all $\NN$-solutions of $a_0 + a_1 + \cdots + a_n = m - \des(w) - 1$. This is exactly the coefficient of $x^{m-1}$ in the expansion of the right-hand side of \cref{eq: Icolordesmaj} and the proof follows. 
\end{proof}

	Notice that in the proof of \cite[Lemma~5.2]{GR93} (see also \cite[Lemma~7.19.10]{StaEC2}) the authors deal with the comajor index instead of the major index. Our choice of the direction of inequalities in the definition of fundamental colored quasisymmetric functions (see \cref{eq: fundamental colored quasi}) allows us to deal with the major index directly. This observation explains the motivation behind our choice. The following formulas are immediate consequences of Theorem \ref{thm: Icolor}.
\begin{corollary}
\label{cor: Icolor}
For a positive integer $n$ and every $\cC \subseteq \fS_{n, r}$, we have 
\begin{align}
\ps^{(r)} (F (\cC)) \ &= \ \frac{\sum_{w \in \cC}  \, q^{\maj(w)}}{(q)_n} \label{eq: IcolormajF}\\ 
\sum_{m \ge 1} \, \ps_m^{(r)} (F(\cC)) \, x^{m-1} \ &= \ \frac{\sum_{w \in \cC} \, x^{\des(w)} q^{\maj(w)}}{(x; q)_{n +1}} \label{eq: IcolordesmajF}\\
\sum_{m \ge 1} \, \widetilde{\ps}_m^{(r)} (F(\cC)) \, x^{m-1} \ &= \ \frac{\sum_{w \in \cC} \, x^{\des^*(w)} q^{\maj(w)}}{(x; q)_{n +1}}. \label{eq: Icolordes*majF}
\end{align}
\end{corollary}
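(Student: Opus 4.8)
The plan is to deduce each of the three identities directly from the corresponding formula in \cref{thm: Icolor}, summing over $w \in \cC$ and exploiting the linearity of the specialization maps. Recall that by definition $F(\cC) = \sum_{w \in \cC} F_w$, and that each of $\ps^{(r)}$, $\ps_m^{(r)}$ and $\widetilde{\ps}_m^{(r)}$ is a ring homomorphism, hence in particular $\CC$-linear. Consequently the specializations commute with the (finite) sum defining $F(\cC)$; for instance $\ps^{(r)}(F(\cC)) = \sum_{w \in \cC} \ps^{(r)}(F_w)$, and likewise for the other two maps.

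First I would settle \cref{eq: IcolormajF}. Applying this linearity and then \cref{eq: Icolormaj} of \cref{thm: Icolor} termwise gives
\[
\ps^{(r)}(F(\cC)) \ = \ \sum_{w \in \cC} \ps^{(r)}(F_w) \ = \ \sum_{w \in \cC} \frac{q^{\maj(w)}}{(q)_n} \ = \ \frac{\sum_{w \in \cC} q^{\maj(w)}}{(q)_n},
\]
where in the last step the denominator $(q)_n$ is pulled out of the sum because it does not depend on $w$.

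For \cref{eq: IcolordesmajF} and \cref{eq: Icolordes*majF} the only additional point is the interchange of the finite sum over $w \in \cC$ with the summation index $m$. Since $\cC$ is a finite set this is harmless: for each fixed $m$ linearity yields $\ps_m^{(r)}(F(\cC)) = \sum_{w \in \cC} \ps_m^{(r)}(F_w)$, so
\[
\sum_{m \ge 1} \ps_m^{(r)}(F(\cC))\, x^{m-1} \ = \ \sum_{w \in \cC} \sum_{m \ge 1} \ps_m^{(r)}(F_w)\, x^{m-1} \ = \ \sum_{w \in \cC} \frac{x^{\des(w)} q^{\maj(w)}}{(x; q)_{n+1}}
\]
by \cref{eq: Icolordesmaj}, and factoring out the common denominator $(x; q)_{n+1}$ gives the claimed expression. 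The argument for \cref{eq: Icolordes*majF} is verbatim the same, invoking \cref{eq: Icolordes*maj} and using $\des^*$ in place of $\des$.

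I do not expect any genuine obstacle here: all the combinatorial content resides in \cref{thm: Icolor}, and the corollary is a purely formal consequence. The two points worth stating explicitly are that the specialization maps, being ring homomorphisms, commute with finite sums, and that swapping the summations over $w$ and over $m$ in the generating-function identities is legitimate precisely because $\cC$ is finite.
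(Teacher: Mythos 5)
Your proof is correct and is exactly the route the paper intends: the paper states \cref{cor: Icolor} as an immediate consequence of \cref{thm: Icolor}, obtained by summing Equations \eqref{eq: Icolormaj}--\eqref{eq: Icolordes*maj} over $w \in \cC$ using linearity of the specializations. Your two explicit remarks (that the specialization maps commute with the finite sum defining $F(\cC)$, and that interchanging the sums over $w$ and $m$ is justified by finiteness of $\cC$) simply spell out what the paper leaves tacit.
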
 

	Although these formulas do not provide much useful information by themselves, in \cref{sec: applications} we apply \cref{cor: Icolor} (and corollaries that follow) for various $\cC$, whose quasisymmetric generating function $F(\cC; \bX^{(r)})$ has a nice form to prove Euler--Mahonian identities on $\cC$. Thus, whenever one proves a formula for the quasisymmetric generating function of a collection $\cC$ of colored permutations (for example, in the case where $F(\cC; \bX^{(r)})$ is symmetric and Schur-positive in $\Lambda(\bx^{(0)}) \otimes \Lambda(\bx^{(1)}) \otimes \cdots \otimes \Lambda(\bx^{(r-1)})$), which can be specialized in a nice way, then an Euler--Mahonian (resp. Mahonian) identity is automatically obtained from \cref{eq: IcolordesmajF} (resp. \cref{eq: IcolormajF}). 
	
	Next, we consider another two specializations and a variation which can be viewed as colored-shifted versions of the stable principal specialization and the principal specialization of order $m$ for $r=1$. Let $\psi^{(r)}$ be the  specialization defined by the substitutions $x_i^{(j)} = q^{r(i - 1) + j}$, for every $0 \le j \le r-1$ and $i \ge 1$. Also, define the specialization $\psi_m^{(r)}$ by the substitutions	
\[
x_i^{(0)} \ = \ q^{r (i - 1)},
\]
for every $1 \le i \le m$ and 
\[
x_i^{(j)} \ = \ q^{r (i - 1) + j},
\]
for every $1 \le i \le m-1$ and $1 \leq j \le r-1$ and zero elsewhere. Furthermore, let $\widetilde{\psi}_m^{(r)}$ be the specialization defined as $\psi_m^{(r)}$, but including the substitution $x_m^{(j)} \ = \ q^{r (m-1) + j}$, for every $1 \le j \le r-1$. 
\begin{theorem}
\label{thm: IIcolor}
For a positive integer $n$ and every $w \in \fS_{n, r}$, we have
\begin{align}
\psi^{(r)} ( F_w ) \ &= \ \frac{q^{\fmaj(w)}}{(q^r)_n} \label{eq: IIcolorfmaj} \\
\sum_{m \ge 1} \, \psi_m^{(r)} (F_w) \, x^{m-1} \ &= \ \frac{x^{\des(w)} q^{\fmaj(w)}}{(x; q^r)_{n +1}} \label{eq: IIcolor(des, fmaj)} \\
\sum_{m \ge 0} \, \widetilde{\psi}_m^{(r)} (F_w) \, x^{m-1} \ &= \  \frac{x^{\des^*(w)} q^{\fmaj(w)}}{(x; q^r)_{n +1}}. \label{eq: IIcolor(des*, fmaj)}
\end{align}
\end{theorem}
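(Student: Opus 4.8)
The plan is to deduce \cref{thm: IIcolor} directly from \cref{thm: Icolor} by observing that the three specializations $\psi^{(r)}$, $\psi_m^{(r)}$ and $\widetilde{\psi}_m^{(r)}$ are nothing but color-shifted versions of $\ps^{(r)}$, $\ps_m^{(r)}$ and $\widetilde{\ps}_m^{(r)}$, obtained through the substitution $q \mapsto q^r$ together with an extra factor recording the color. First I would record the elementary identity at the level of a single variable: for each of the three matching pairs, the variable $x_i^{(j)}$ is sent to zero under $\psi$ if and only if it is sent to zero under the corresponding $\ps$ (the two families share exactly the same support, namely color $0$ surviving for $1 \le i \le m$ and colors $j \ge 1$ surviving for $1 \le i \le m-1$ in the bounded case, with the boundary index $i=m$ reinstated for the tilde versions), and whenever it does not vanish one has
\[
\psi(x_i^{(j)}) \;=\; q^{\,j}\,\bigl[\ps(x_i^{(j)})\bigr]_{q \mapsto q^r},
\]
since $q^{r(i-1)+j} = (q^{r})^{\,i-1}\,q^{\,j}$, while $\ps(x_i^{(j)}) = q^{\,i-1}$ does not depend on the color $j$.

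Multiplying this relation over the $n$ letters of a monomial $x_{i_1}^{(c_1)} \cdots x_{i_n}^{(c_n)}$ attached to $w = w(1)^{c_1}\cdots w(n)^{c_n}$, the color factors collect to $q^{c_1 + \cdots + c_n} = q^{\csum(w)}$, and the identical supports guarantee that a monomial survives under $\psi$ exactly when it survives under $\ps$ (a killed term contributes $0 = q^{\csum(w)}\cdot 0$ on both sides). Summing over all monomials defining $F_w$ as in \eqref{eq: fundamental colored quasi} yields, for each of the three pairs,
\[
\psi(F_w) \;=\; q^{\csum(w)}\,\bigl[\ps(F_w)\bigr]_{q \mapsto q^r}.
\]
I would emphasize that this term-by-term identity holds at the level of specialization values alone and refers in no way to the descent structure of $w$, so no compatibility between colors and descents is needed at this stage.

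It then remains to substitute the evaluations provided by \cref{thm: Icolor}. For the stable case, \eqref{eq: Icolormaj} gives $[\ps^{(r)}(F_w)]_{q \mapsto q^r} = q^{r\maj(w)}/(q^r)_n$, and multiplying by $q^{\csum(w)}$ produces $q^{r\maj(w)+\csum(w)}/(q^r)_n = q^{\fmaj(w)}/(q^r)_n$, which is \eqref{eq: IIcolorfmaj}. For the two bounded cases I would apply the ring homomorphism $q \mapsto q^r$, which fixes $x$, to the generating-function identities \eqref{eq: Icolordesmaj} and \eqref{eq: Icolordes*maj}, using $[(x;q)_{n+1}]_{q \mapsto q^r} = (x;q^r)_{n+1}$, and then multiply by the $m$-independent factor $q^{\csum(w)}$; this turns $q^{\maj(w)}$ into $q^{r\maj(w)+\csum(w)} = q^{\fmaj(w)}$ and delivers \eqref{eq: IIcolor(des, fmaj)} and \eqref{eq: IIcolor(des*, fmaj)} (the $m=0$ summand in the latter contributes nothing, since $F_w$ has no constant term for $n \ge 1$).

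The only point requiring care, and the place where I expect the bookkeeping to be most delicate, is the verification that the supports of the two specialization families truly coincide at the top index $i=m$ in the bounded cases: under $\psi_m^{(r)}$ and $\ps_m^{(r)}$ the color $0$ is allowed up to $i=m$ while colors $j \ge 1$ are allowed only up to $i=m-1$, and one must check that the tilde versions reinstate precisely $x_m^{(j)}$ for $1 \le j \le r-1$, so as to match the full order-$m$ principal specialization $\widetilde{\ps}_m^{(r)}$ on every color. Once this support matching is confirmed the argument is purely formal; alternatively, one could re-run the proof of \cref{thm: Icolor} verbatim with $q^{r(i-1)+j}$ in place of $q^{i-1}$, which reaches the same conclusion at the cost of repeating the descent manipulation.
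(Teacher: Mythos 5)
Your proposal is correct, but it reaches \cref{thm: IIcolor} by a genuinely different route than the paper, which omits the proof entirely with the remark that it is a ``slight variation'' of the direct computation proving \cref{thm: Icolor} (i.e.\ re-running the index shifts $i_j' = i_j - r\chi_j - \cdots - r\chi_{n-1}$ with the shifted powers $q^{r(i-1)+j}$, in the style later written out for \cref{thm: IIIcolor}). You instead derive the theorem as a formal corollary of \cref{thm: Icolor}: you observe that $\psi^{(r)}$, $\psi_m^{(r)}$, $\widetilde{\psi}_m^{(r)}$ have exactly the same supports as $\ps^{(r)}$, $\ps_m^{(r)}$, $\widetilde{\ps}_m^{(r)}$ and differ on that support by $\psi(x_i^{(j)}) = q^{\,j}\,[\ps(x_i^{(j)})]_{q \mapsto q^r}$, and since every monomial of $F_w$ in \eqref{eq: fundamental colored quasi} carries the same color word $(c_1,\dots,c_n)$ (the descent conditions constrain only the indices $i_1 \ge \cdots \ge i_n$), the relation $\psi(F_w) = q^{\csum(w)}\,[\ps(F_w)]_{q \mapsto q^r}$ holds term by term for each matching pair. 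Applying the ring homomorphism $q \mapsto q^r$ (which fixes $x$ and sends $(x;q)_{n+1}$ to $(x;q^r)_{n+1}$) to \eqref{eq: Icolormaj}--\eqref{eq: Icolordes*maj} and multiplying by the $m$-independent factor $q^{\csum(w)}$ then yields all three identities, with $r\maj(w)+\csum(w) = \fmaj(w)$ appearing for transparent structural reasons rather than emerging from bookkeeping. Your boundary check at $i=m$ is precisely the one point where care is required, and you verify it correctly (the tilde specializations reinstate exactly $x_m^{(j)}$ for $1 \le j \le r-1$, matching $\widetilde{\ps}_m^{(r)}$ on every color); your remark that the $m=0$ summand in \eqref{eq: IIcolor(des*, fmaj)} vanishes because $F_w$ has no constant term also quietly repairs what is presumably a typo in the paper's summation range there. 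As for what each approach buys: yours is shorter, avoids repeating the descent manipulation, and extends verbatim to \cref{thm: IVklvariation} (substitute $q \mapsto q^k$ and multiply by $q^{\ell\neg(w)}$), whose proof the paper also omits; the paper's direct method is self-contained and remains the template genuinely needed for \cref{thm: IIIcolor}, where the congruence constraint $i_1, \dots, i_n \equiv 1 \pmod{r}$ and the flag descent number make a single global substitution in $q$ insufficient.
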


	The proof of \cref{thm: IIcolor} is a slight variation of the proof of \cref{thm: Icolor} and is therefore omitted. The following formulas are immediate consequences of \cref{thm: IIcolor}.
\begin{corollary}
\label{cor: IIcolor}
For a positive integer $n$ and every $\cC \subseteq \fS_{n, r}$, we have 
\begin{align}
\psi^{(r)} (F (\cC)) \ &= \ \frac{\sum_{w \in \cC}  \, q^{\fmaj(w)}}{(q^r)_n} \label{eq: IIcolorfmajF}\\ 
\sum_{m \ge 1} \, \psi_m^{(r)} (F(\cC)) \, x^{m-1}  \ &= \ \frac{\sum_{w \in \cC} \, x^{\des(w)} q^{\fmaj(w)}}{(x; q^r)_{n +1}} \label{eq: IIcolordesfmajF}\\
\sum_{m \ge 1} \, \widetilde{\psi}_m^{(r)} (F(\cC)) \, x^{m-1} \ &= \ \frac{\sum_{w \in \cC} \, x^{\des^*(w)} q^{\fmaj(w)}}{(x; q^r)_{n +1}}. \label{eq: IIcolordes*fmajF}
\end{align}
\end{corollary}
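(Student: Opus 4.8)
The plan is to derive all three identities from the corresponding lines of \cref{thm: IIcolor} using nothing more than the fact that each of $\psi^{(r)}$, $\psi_m^{(r)}$ and $\widetilde{\psi}_m^{(r)}$ is a ring homomorphism, hence $\CC$-linear, together with the defining expansion $F(\cC) = \sum_{w \in \cC} F_w$. Since $\cC$ is a finite set, this sum is finite and no convergence issue arises, which is precisely why the corollary is an immediate consequence.

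First I would establish \cref{eq: IIcolorfmajF}. Linearity of $\psi^{(r)}$ applied to $F(\cC) = \sum_{w \in \cC} F_w$ gives $\psi^{(r)}(F(\cC)) = \sum_{w \in \cC} \psi^{(r)}(F_w)$; substituting \cref{eq: IIcolorfmaj} for each summand and pulling the $w$-independent denominator $(q^r)_n$ outside the sum yields the stated formula. For \cref{eq: IIcolordesfmajF} the same linearity gives $\sum_{m \ge 1} \psi_m^{(r)}(F(\cC))\, x^{m-1} = \sum_{m \ge 1} \sum_{w \in \cC} \psi_m^{(r)}(F_w)\, x^{m-1}$; interchanging the finite sum over $w$ with the sum over $m$ (legitimate since the inner sum is finite) and applying \cref{eq: IIcolor(des, fmaj)} to each inner series produces $\sum_{w \in \cC} x^{\des(w)} q^{\fmaj(w)} / (x; q^r)_{n+1}$, and again the common denominator factors out. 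The third identity \cref{eq: IIcolordes*fmajF} follows verbatim, replacing $\psi_m^{(r)}$, $\des$ and \cref{eq: IIcolor(des, fmaj)} by $\widetilde{\psi}_m^{(r)}$, $\des^*$ and \cref{eq: IIcolor(des*, fmaj)}.

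There is no genuine obstacle in this argument: the only facts used are the additivity of a ring homomorphism and the interchange of two summations, one of which is finite. The single point worth recording is that the specializations really are ring homomorphisms $\CC[[\bX^{(r)}]] \to \CC[[q]]$ or $\to \CC[q]$ as defined in \cref{sec: special}, so that $\CC$-linearity is available termwise; everything else is formal manipulation of the identities already proved for a single $w$ in \cref{thm: IIcolor}.
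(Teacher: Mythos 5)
Your proposal is correct and is exactly the argument the paper intends: it states \cref{cor: IIcolor} as an ``immediate consequence'' of \cref{thm: IIcolor}, with the omitted justification being precisely the summation of the single-permutation identities over the finite set $\cC$, using linearity of the specializations and the interchange of the sum over $m$ with the finite sum over $w$. Nothing further is needed.
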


	The description of the next specialization is slightly more complicated than the previous ones and for this reason we consider the case $r=2$ first. In this case, we write $\bx$ and $\by$ instead of $\bx^{(0)}$ and $\bx^{(1)}$. Let $\phi_m$ be the specialization defined by the substitutions $x_i = q^{i-1}$, for every odd integer $1 \le i \le m$ and $y_i = q^i$, for every odd integer $1 \le i \le m-1$ and $x_i = y_i = 0$, elsewhere. In particular,
\[
x_m \ = \ 
	\begin{cases}
		q^{m-1}, \ &\text{if $m \equiv 1 \pmod{2}$} \\
		0, \ &\text{if $m \equiv 0 \pmod{2}$}
	\end{cases}
\]
and
\[
y_{m-1} \ = \ 
	\begin{cases}
		0, \ &\text{if $m \equiv 1 \pmod{2}$} \\
		q^{m-1}, \ &\text{if $m \equiv 0 \pmod{2}$}.
	\end{cases}
\]

	Now, in the  general case define the specialization $\phi_m^{(r)}$ by the substitutions
\[
x_i^{(j)} \ = \ 
	\begin{cases}
  		q^{i - 1 + j}, & \, \text{for every $1 \le i \le m$ with $i \equiv 1 \pmod{r}$} \\
		0, & \, \text{otherwise}
	\end{cases}
\]
for every $0 \le j \le r-1$, such that 
\begin{itemize}
\item $x_m^{(1)} = x_m^{(2)} = \cdots = x_m^{(r-1)} = 0$, and
\item if $m \equiv c \pmod{r}$, then the last non zero substitution is
\[
x_{m - c + 1}^{(c-1)} \ = \ q^{m-1},
\]
where $1 \le c \le r$. 
\end{itemize}
In particular, we see that if $m \equiv 1 \pmod{r}$, then the last nonzero term is $x_m^{(0)}$, if $m \equiv 2 \pmod{r}$, then the last non zero term is $x_{m - 1}^{(1)}$, and so on. Thus, $\phi^{(r)}_m$ coincides with $\phi_m$ for $r=2$.  
\begin{theorem}
\label{thm: IIIcolor}
For a positive integer $n$ and every $w \in \fS_{n, r}$, we have 
\begin{equation}
\label{eq: IIIcolorfdesfmaj}
\sum_{m \ge 1} \, \phi_m^{(r)} ( F_w ) \,x^{m-1} \ = \ \frac{x^{\fdes(w)} q^{\fmaj(w)}}{(1-x)(1 - x^r q^r)(1 - x^r q^{2r}) \cdots (1 - x^r q^{nr})}.
\end{equation}
Furthermore, for every $\cC \subseteq \fS_{n, r}$ we have 
\begin{equation}
\label{eq: IIIcolorfdesfmajF}
\sum_{m \ge 1} \, \phi_m^{(r)} ( F(\cC )) \, x^{m-1} \ = \ \frac{\sum_{w \in \cC} \, x^{\fdes(w)} q^{\fmaj(w)}}{(1-x)(1 - x^r q^r)(1 - x^r q^{2r}) \cdots (1 - x^r q^{nr})}.
\end{equation}
\end{theorem}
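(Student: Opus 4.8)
The plan is to specialize the defining sum \eqref{eq: fundamental colored quasi} and then reduce the result to a lattice-point count, exactly as in the proof of \cref{thm: Icolor}; the one genuinely new ingredient is a monotonicity property coming from the interaction of colors and descents. First I would apply $\phi_m^{(r)}$ term by term to
\[
F_w \ = \ \sum_{\substack{i_1 \ge \cdots \ge i_n \ge 1 \\ j \in \Des^*(w)\,\toto\,i_j > i_{j+1}}} x_{i_1}^{(c_1)} \cdots x_{i_n}^{(c_n)}.
\]
Since $\phi_m^{(r)}$ annihilates every variable $x_i^{(j)}$ with $i \not\equiv 1 \pmod r$, only the monomials all of whose indices satisfy $i_k \equiv 1 \pmod r$ survive. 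Writing $i_k = r b_k + 1$ with $b_k \in \NN$ turns the chain $i_1 \ge \cdots \ge i_n \ge 1$ (strict at each $j \in \Des^*(w)$) into $b_1 \ge \cdots \ge b_n \ge 0$ (again strict at $\Des^*(w)$), and the surviving factor $x_{i_k}^{(c_k)}$ contributes $q^{r b_k + c_k}$, subject to the support condition $r b_k + c_k \le m-1$. Hence $\phi_m^{(r)}(F_w) = q^{\csum(w)} \sum q^{r(b_1 + \cdots + b_n)}$, the sum ranging over all such $(b_k)$ with $\max_k (r b_k + c_k) \le m-1$.

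The key step is to show that the sequence $e_k := r b_k + c_k$ is weakly decreasing in $k$, so that the constraint $\max_k e_k \le m-1$ collapses to the single inequality $e_1 = r b_1 + c_1 \le m-1$. Here I would use the compatibility between the order and the colors: whenever $k$ is not a descent one has $c_k \ge c_{k+1}$, so $e_k - e_{k+1} = r(b_k - b_{k+1}) + (c_k - c_{k+1}) \ge 0$; while at a descent $b_k \ge b_{k+1} + 1$, so $e_k - e_{k+1} \ge r - (c_{k+1} - c_k) \ge r - (r-1) = 1 > 0$. This is the exact analogue of the virtual index $i_0 = m$ that carries the weight of a descent in position $0$ in the proof of \cref{thm: Icolor}, and it is the only place where the descent convention is really used. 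Granting it, I interchange the order of summation: for fixed $(b_k)$ the corresponding term is present precisely for $m \ge e_1 + 1$, so summing the geometric series gives $\sum_{m \ge e_1 + 1} x^{m-1} = x^{e_1}/(1-x) = x^{c_1}(x^r)^{b_1}/(1-x)$, and therefore
\[
\sum_{m \ge 1} \phi_m^{(r)}(F_w)\, x^{m-1} \ = \ \frac{q^{\csum(w)}\, x^{c_1}}{1-x} \sum_{\substack{b_1 \ge \cdots \ge b_n \ge 0 \\ \text{strict at } \Des^*(w)}} (x^r)^{b_1}\,(q^r)^{b_1 + \cdots + b_n}.
\]

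It remains to evaluate the last sum, which is the familiar computation underlying the $r = 1$ principal specialization. I would absorb the strict inequalities by setting $\beta_k := b_k - |\Des^*(w) \cap [k, n-1]|$; then $\beta_1 \ge \cdots \ge \beta_n \ge 0$ is weakly decreasing, $b_1 = \beta_1 + \des^*(w)$ and $b_1 + \cdots + b_n = \beta_1 + \cdots + \beta_n + \maj(w)$, which pulls out a factor $(x^r)^{\des^*(w)} (q^r)^{\maj(w)}$. Introducing the slack variables $a_k := \beta_k - \beta_{k+1}$ for $1 \le k \le n-1$ and $a_n := \beta_n$ then factors the remaining geometric sum as $\prod_{l=1}^n (1 - x^r q^{rl})^{-1}$. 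Collecting the pieces and using $x^{c_1} x^{r\des^*(w)} = x^{\fdes(w)}$ and $q^{\csum(w)} q^{r\maj(w)} = q^{\fmaj(w)}$, together with the leftover factor $1/(1-x)$, yields precisely \eqref{eq: IIIcolorfdesfmaj}. Finally, \eqref{eq: IIIcolorfdesfmajF} follows at once by summing \eqref{eq: IIIcolorfdesfmaj} over $w \in \cC$ and invoking linearity of $\phi_m^{(r)}$ together with $F(\cC) = \sum_{w \in \cC} F_w$, just as \cref{cor: Icolor} is deduced from \cref{thm: Icolor}. I expect the monotonicity of $(e_k)$ to be the only real obstacle; the rest is bookkeeping parallel to \cref{thm: Icolor}.
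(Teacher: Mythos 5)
Your proof is correct, and its overall shape --- specialize the defining sum of $F_w$, absorb the strict inequalities by a shift, and sum geometric series in slack variables --- is the same as the paper's; the genuinely different ingredient is your explicit monotonicity lemma for $e_k = rb_k + c_k$, and it is a real improvement. The paper's \eqref{eq: IIIa} displays only the chain constraints $m =: i_0 \ge i_1 \ge \cdots \ge i_n \ge 1$, then in passing to \eqref{eq: IIIaa} strengthens the constraint at position $1$ to $i_1 + c_1 \le m$ (justified by the remark about the last nonzero substitution), but it never verifies that the support constraints $i_k - 1 + c_k \le m-1$ at the inner positions $k \ge 2$ are non-binding; your weak decrease of $(e_k)$ is precisely that missing verification, and your substitution $i_k = rb_k+1$ also dispenses with the virtual index $i_0$ and the $0$-descent convention altogether, since $x^{e_1} = x^{c_1}(x^r)^{b_1}$ produces the summand $c_1$ of $\fdes(w) = r\des^*(w) + c_1$ directly. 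Note, moreover, that your compatibility hypothesis ($c_k \ge c_{k+1}$ at every non-descent) is not cosmetic: it holds for the color order, which is the order used throughout \cref{sec: applications}, but it fails for Steingr\'{i}msson's order, and with it the identity itself fails --- contrary to the blanket claim in the preamble of \cref{sec: special} that the results there hold for every total order. Indeed, for $w = 1^0 2^1 \in \fS_{2,2}$ one has $\Des_{<_{\St}}(w) = \emptyset$, so $F_{w, <_{\St}} = \sum_{i_1 \ge i_2 \ge 1} x_{i_1}^{(0)} x_{i_2}^{(1)}$ and $\phi_1^{(2)}(F_{w, <_{\St}}) = 0$ (every monomial contains a color-$1$ variable, and $\phi_1^{(2)}$ sets all of them to zero), while the right-hand side of \eqref{eq: IIIcolorfdesfmaj} with $\fdes_{<_{\St}}(w) = 0$ and $\fmaj_{<_{\St}}(w) = 1$ contributes $q \neq 0$ in the coefficient of $x^0$; so your proof in fact pins down the correct hypothesis on the order. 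The remaining bookkeeping in your proposal (the shift $\beta_k$, the slack variables $a_k$, and $x^{c_1 + r\des^*(w)} q^{r\maj(w) + \csum(w)} = x^{\fdes(w)} q^{\fmaj(w)}$) matches the paper's computation step for step, and your deduction of \eqref{eq: IIIcolorfdesfmajF} by linearity is exactly as in the paper.
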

\begin{proof}
For $w \in \fS_{n, r}$ with color vector $(c_1, c_2, \dots, c_n)$, specialization $\phi^{(r)}_m$, when applied to the fundamental colored  quasisymmetric function, becomes
\[
\phi_m^{(r)} (x_{i_j}^{(c_j)}) \ = \ q^{i_j - 1 + c_j},
\]
subject to some restrictions. Thus, we have
\begin{equation}
\label{eq: IIIa}
\phi_m^{(r)} ( F_w) \ = \ 
\sum_{\substack{m := i_0 \ge i_1 \ge i_2 \ge \cdots \ge i_n \ge 1 \\ j \in \Des(w) \, \toto \, i_j > i_{j+1} \\ i_1, \dots, i_n \equiv 1 \pmod{r}}} \, q^{i_1 + i_2 + \cdots + i_n - n + \csum(w)},
\end{equation}
because as in \cref{eq: Ib},  $x_m^{(1)}, x_m^{(2)}, \dots, x_m^{(r-1)}$ occur only if $c_1 \ne 0$, which means that $0$ is a descent of $w$. Define 
\begin{align*}
i_0' \ &=\  i_0 - c_1 - r\chi_1 - \cdots - r\chi_{n -1} \\
i_j' \ &= \ i_j - r\chi_j - \cdots - r\chi_{n - 1} \\
i_n' \ &= \ i_n,
\end{align*}
where $\chi_j := \chi (j \in \Des(w))$, for every $1 \le j \le n-1$. Then, 	  \cref{eq: IIIa} becomes
\begin{equation}
\label{eq: IIIaa}
\phi_m^{(r)} (F_w) \ = \ 
\sum_{\substack{m - \fdes(w) \ge i_1' \ge i_2' \ge \cdots \ge i_n' \ge 1 \\ i_1', \dots i_n' \ \equiv \ 1 \pmod{r}}} \,
q^{i_1' + i_2' + \cdots + i_n' - n + \fmaj(w)},
\end{equation}
because
\begin{align*}
\fdes (w) \ &= \ c_1 + r\sum_{j = 1}^{n-1} \, \chi_j \\
\fmaj (w) \ &= \ r\sum_{j = 1}^{n-1}\,  j\chi_j + \csum(w).
\end{align*}
The first inequality in \cref{eq: IIIaa} is justified by the fact that the last non zero substitution is $x_{m - c + 1}^{(c-1)} \ = \ q^{m-1}$, where $m \equiv c \pmod{r}$, for every $1 \le c \le r$. Now, making the substitution
\begin{align*}
a_0 \ &= \ m - \fdes(w) - i_1' \\
a_j \ &= \ i_j' - i_{j + 1}' \\
a_n \ &= \ i_n' - 1,
\end{align*}
for every $1 \le j \le n-1$, \cref{eq: IIIaa} becomes 
\begin{equation}
\label{eq: IIIaaa}
\phi_m^{(r)} (F_w) \ = \ \sum \, q^{a_1 + 2a_2 + \cdots + na_n + \fmaj(w)},
\end{equation}
where the sum runs through all $\NN$-solutions of $a_0 + a_1 + a_2 + \cdots + a_n = m - \fdes(w) -1$ with the requirement that $a_1, a_2, \dots, a_n \equiv 0 \pmod{r}$ , as the difference of two positive integers congruent to $1 \pmod{r}$. The right-hand side of \cref{eq: IIIaaa} is precisely the coefficient of $x^{m - 1}$ in the expansion of \cref{eq: IIIcolorfdesfmaj} and the proof follows.
\end{proof}

	Lastly, we introduce a two parameter flag major index for signed permutations. Recall from \cref{subsec: colored} that in the case $r=2$, we identify $1^{(0)}, 2^{(0)}, \dots, n^{(0)}$ and $1^{(1)}, 2^{(1)}, \dots, n^{(1)}$, with $1, 2, \dots, n$ and $\ol{1}, \ol{2}, \dots, \ol{n}$, respectively. For $w \in \fB_n$, we write $\neg(w)$ instead of $\csum(w)$ to denote the number of indices $i \in [n]$ such that $w(i)$ is barred. As in the case of general $r$, we fix a total order $<$ on $\Om_{n, 2}$ and define $\Des_<(w)$ to be the set of all indices $i \in [n-1]$, such that $w(i) > w(i+1)$, together with $0$ whenever $w(1)$ is barred. The cardinality and the sum of all elements of $\Des_<(w)$ are written as $\des_<(w)$ and $\maj_<(w)$, respectively. Let $\Des_<^*(w)$ be the set obtained from $\Des_<(w)$ by removing zero, if present, and let $\des_<^*(w)$ be its cardinality. Also, define the flag major index of $w$ to be $\fmaj_<(w) := 2\maj_<(w) + \neg(w)$. Again, the results that follow are valid for every total order on $\Om_{n, 2}$ and therefore we omit the total order subscript.
	
	Let $k$ and $\ell$ be a positive, and nonnegative respectively, integer. For $w \in \fB_n$, define
\[
\fmaj_{k, \ell} (w) := k\maj(w) + \ell\neg(w) 
\]
the \emph{$(k, \ell)$-flag-major~index} of $w$.  The $(1, 0)$-flag-major index coincides with the major index and the $(2, 1)$-flag-major index is just the flag major index on signed permutations. We are going to derive general formulas for the pair $(\des, \fmaj_{k, \ell})$, by considering a $(k, \ell)$-variation of the specializations of \cref{thm: IIcolor} for $r=2$. 

	Let $\theta$ be the specialization defined by substitutions $x_i \ = \ q^{k(i-1)}$ and $y_i \ = \ q^{k(i-1) + \ell}$, for every $i \ge 1$. Also, define the specialization $\theta_m$ by the substitutions $x_i \ = \ q^{k(i-1)}$, for every $1 \le i \le m$ and $y_i \ = \ q^{k(i-1) + \ell}$, for every $1 \le i \le m-1$. For $(k, \ell) = (1, 0)$ and 
$(k, \ell) = (2, 1)$, these specializations coincide with $\ps^{(2)}, \ps_m^{(2)}$ and $\psi^{(2)}, \psi_m^{(2)}$, respectively. Furthermore, let $\widetilde{\theta}_m$ be the specialization defined as $\theta_m$, but including the substitution $y_m = q^{k(m-1) + \ell}$.
\begin{theorem}
\label{thm: IVklvariation}
For a positive integer $n$ and every $w \in \fB_n$, we have 
\begin{align}
\theta(F_w) \ &= \ \frac{q^{\fmaj_{k, \ell}(w)}}{(q^k)_n} \label{eq: IVfmaj}\\ 
\sum_{m \ge 1} \, \theta_m(F_w) \, x^{m-1} \ &= \ \frac{x^{\des(w)} q^{\fmaj_{k, \ell}(w)}}{(x; q^k)_{n +1}} \label{eq: IVdesfmaj}\\
\sum_{m \ge 1} \, \widetilde{\theta}_m(F_w) \, x^{m-1}  \ &= \ \frac{x^{\des^*(w)} q^{\fmaj_{k, \ell}(w)}}{(x; q^k)_{n +1}}. \label{eq: IVdes*fmaj}
\end{align}
\end{theorem}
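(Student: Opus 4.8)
The plan is to run, in the two–parameter setting with $r=2$, exactly the argument that proves \cref{thm: Icolor} and its shifted variant \cref{thm: IIcolor}. The observation that unifies all three specializations is that each of $\theta$, $\theta_m$, $\widetilde{\theta}_m$ sends a single colored variable to
\[
x_{i_j}^{(c_j)} \ \longmapsto \ q^{\,k(i_j - 1) + \ell c_j},
\]
since $\theta(x_i)=q^{k(i-1)}$ and $\theta(y_i)=q^{k(i-1)+\ell}$ realize this with $c_j\in\{0,1\}$ recording whether $w(j)$ is barred; the three specializations differ only in which index range is nonzero. First I would apply $\theta$ to the defining sum \eqref{eq: fundamental colored quasi} of $F_w$ and pull out the color exponents. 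Because $\sum_j c_j=\neg(w)$, this gives
\[
\theta(F_w) \ = \ q^{\ell\neg(w)} \sum_{\substack{i_1 \ge i_2 \ge \cdots \ge i_n \ge 1 \\ j \in \Des^*(w) \,\toto\, i_j > i_{j+1}}} q^{\,k(i_1 + i_2 + \cdots + i_n - n)}.
\]
The inner sum is the stable principal specialization computation of \cref{thm: Icolor} with $q$ replaced by $q^k$; since a descent at $0$ contributes nothing to the major index, the sum over $\Des^*(w)$ produces the exponent $k\maj(w)$, and \eqref{eq: IVfmaj} follows at once.

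For the finite specializations I would repeat the bookkeeping of \eqref{eq: Ib}–\eqref{eq: Ibb}. Setting $i_0:=m$, the truncation in $\theta_m$ (here $y_m=0$ while $x_m\neq 0$) means a barred variable can attain the maximal index $m$ only when $c_1\neq 0$, that is, exactly when $0\in\Des(w)$. Hence
\[
\theta_m(F_w) \ = \ q^{\ell\neg(w)} \sum_{\substack{m = i_0 \ge i_1 \ge \cdots \ge i_n \ge 1 \\ j \in \Des(w) \,\toto\, i_j > i_{j+1}}} q^{\,k(i_1 + \cdots + i_n - n)}.
\]
I would then apply the two substitutions of \cref{thm: Icolor}: first $i_j' = i_j - \sum_{p=j}^{n-1}\chi_p$ with $\chi_p=\chi(p\in\Des(w))$, which extracts $q^{k\maj(w)}$ and linearizes the chain, and then $a_0 = m - \des(w) - i_1'$, $a_j = i_j' - i_{j+1}'$, $a_n = i_n' - 1$. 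This turns the expression into $q^{\fmaj_{k,\ell}(w)}\sum q^{\,k(a_1 + 2a_2 + \cdots + na_n)}$ summed over $\NN$-solutions of $a_0 + \cdots + a_n = m - \des(w) - 1$.

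Finally I would sum over $m$: grouping by $N=m-\des(w)-1$ and recognizing the multi-geometric series gives
\[
\sum_{a_0,\dots,a_n \ge 0} x^{\,a_0 + \cdots + a_n}\, q^{\,k(a_1 + 2a_2 + \cdots + na_n)} \ = \ \frac{1}{(x; q^k)_{n+1}},
\]
which yields \eqref{eq: IVdesfmaj}. Equation \eqref{eq: IVdes*fmaj} is identical except that including $y_m$ in $\widetilde{\theta}_m$ removes the boundary restriction, so $0$ is no longer forced to be a descent; the chain is then constrained only by $\Des^*(w)$, producing $x^{\des^*(w)}$ in place of $x^{\des(w)}$. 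The one genuinely delicate point — the same one that \cref{thm: Icolor} leaves implicit — is the reduction of the color-dependent support constraints to a single boundary condition on $i_1$; I expect this to be the main obstacle, and I would justify it through the color order, under which a monomial with $i_1=\cdots=i_p=m$ and $c_p\neq 0$ (and no descent among $1,\dots,p-1$) must have non-increasing colors $c_1\ge\cdots\ge c_p>0$, forcing $c_1\neq 0$ and hence $0\in\Des(w)$. Everything else is the routine descent-to-composition manipulation of \cref{thm: Icolor}, now run with $q\mapsto q^k$ and an overall factor $q^{\ell\neg(w)}$.
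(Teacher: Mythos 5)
Your proof is correct and is precisely the argument the paper intends: the paper omits the proof of \cref{thm: IVklvariation}, stating only that it is a $(k,\ell)$-variation of the proof of \cref{thm: Icolor}, and your computation carries out exactly that variation, running the chain-to-composition manipulation with $q$ replaced by $q^k$ and extracting the overall factor $q^{\ell\neg(w)}$ from the colors. Your explicit justification of the one delicate step---that the vanishing of $y_m$ under $\theta_m$ reduces, via the color order, to the single boundary condition $0\in\Des(w)\Rightarrow i_1<m$---is a detail the paper's proof of \cref{thm: Icolor} leaves implicit, so supplying it strengthens rather than departs from the intended argument.
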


	The proof of \cref{thm: IVklvariation} is a $(k, \ell)$-variation of the proof of \cref{thm: Icolor} and is therefore omitted. The following formulas are immediate consequences of \cref{thm: IVklvariation}.
\begin{corollary}
\label{cor: IVklvariation}
For a positive integer $n$ and every $\bB \subseteq \fB_n$, we have 
\begin{align}
\theta(F(\bB))  \ &= \ \frac{\sum_{w \in \bB}  \, q^{\fmaj_{k, \ell}(w)}}{(q^k)_n} \label{eq: IVfmaj F}\\ 
\sum_{m \ge 1} \, \theta_m(F (\bB))  \, x^{m-1} \ &= \ \frac{\sum_{w \in \bB} \, x^{\des(w)} q^{\fmaj_{k, \ell}(w)}}{(x; q^k)_{n + 1}} \label{eq: IVdesfmajF}\\
\sum_{m \ge 1} \, \widetilde{\theta}_m(F (\bB)) \, x^{m-1} \ &= \ \frac{\sum_{w \in \bB} \, x^{\des^*(w)} q^{\fmaj_{k, \ell}(w)}}{(x; q^k)_{n +1}}. \label{eq: IVdes*fmajF}
\end{align}
\end{corollary}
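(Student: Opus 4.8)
The plan is to reproduce the proof of \cref{thm: Icolor} almost verbatim for $r=2$, inserting the parameters $k$ and $\ell$ at exactly the two places where the base $q$ and the colour contribution enter. Since the three specializations $\theta$, $\theta_m$ and $\widetilde{\theta}_m$ differ only in their behaviour at the top index, I would prove \eqref{eq: IVfmaj} and \eqref{eq: IVdesfmaj} in parallel and obtain \eqref{eq: IVdes*fmaj} by the same computation with $\des$ replaced by $\des^*$. As sanity checks one recovers \eqref{eq: Icolormaj}--\eqref{eq: Icolordesmaj} at $(k,\ell)=(1,0)$ and \eqref{eq: IIcolorfmaj}--\eqref{eq: IIcolor(des, fmaj)} at $(k,\ell)=(2,1)$.

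First I would apply the specializations to the defining sum \eqref{eq: fundamental colored quasi}. Writing $c_j \in \{0, 1\}$ for the colour of position $j$, so that $\sum_j c_j = \neg(w)$, the substitution $x_{i_j}^{(c_j)} \mapsto q^{k(i_j-1)+\ell c_j}$ turns $\theta(F_w)$ into $\sum q^{k(i_1+\cdots+i_n-n)+\ell\neg(w)}$, the sum ranging over $i_1 \ge \cdots \ge i_n \ge 1$ with a strict inequality forced at each $j \in \Des^*(w)$; for $\theta_m$ one adjoins a phantom top index $i_0 := m$. The colour contribution $\ell\neg(w)$ factors out as a constant, and this is precisely the term that upgrades $k\maj(w)$ to $\fmaj_{k, \ell}(w)=k\maj(w)+\ell\neg(w)$.

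Next I would run the two substitutions of the model proof. Setting $i_j' = i_j - \chi_j - \cdots - \chi_{n-1}$ with $\chi_j = \chi(j \in \Des(w))$ absorbs the forced strict descents and pulls out the factor $q^{k\maj(w)}$, because $\sum_{j=1}^{n}\sum_{l \ge j}\chi_l = \maj(w)$; combined with the colour factor this gives $q^{\fmaj_{k, \ell}(w)}$. A second substitution $a_0 = m - \des(w) - i_1'$, $a_j = i_j' - i_{j+1}'$, $a_n = i_n'-1$ converts the (in the $\theta_m$ case, bounded) chain of weak inequalities into the set of $\NN$-solutions of $a_0 + \cdots + a_n = m - \des(w) - 1$, weighted by $q^{k(a_1 + 2a_2 + \cdots + na_n)}$. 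Here the base is simply $q^k$ and, in contrast to \cref{thm: IIIcolor}, there is no congruence restriction on the $i_j'$, so summing the geometric series gives $\prod_{j=1}^n (1-q^{kj})^{-1} = 1/(q^k)_n$ in the unbounded case and, after reading off the coefficient of $x^{m-1}$, the factor $1/(x; q^k)_{n+1}$ in the bounded case. This yields the right-hand sides of \eqref{eq: IVfmaj} and \eqref{eq: IVdesfmaj}.

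The delicate step, and the one I expect to be the main obstacle, is the treatment of the top index that distinguishes $\theta_m$ from $\widetilde{\theta}_m$ and decides whether the exponent of $x$ is $\des(w)$ or $\des^*(w)$. Under $\theta_m$ the variable $y_m$ is set to $0$, so a monomial with $i_1 = m$ survives only when $w(1)$ is unbarred; this must be shown to be equivalent to imposing the strict inequality $i_0 = m > i_1$ exactly when $0 \in \Des(w)$, thereby recording the descent at position $0$. Verifying this equivalence is where one must invoke the structure of the order: for the color order the relevant fact is that along a weakly increasing run (equivalently, a block of equal $i_j$'s) the barred entries precede the unbarred ones, so that $c_1 = 0$ forces every position of the top block to carry colour $0$ and no hidden vanishing at a later position is overlooked. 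Under $\widetilde{\theta}_m$ the substitution $y_m = q^{k(m-1)+\ell}$ is reinstated, the top index is then unconstrained for both colours, and the same computation goes through with $\Des(w)$ replaced throughout by $\Des^*(w)$, giving \eqref{eq: IVdes*fmaj}.
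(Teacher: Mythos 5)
Your proposal is correct and takes essentially the same route as the paper: there the corollary is immediate by linearity of $\theta$, $\theta_m$, $\widetilde{\theta}_m$ applied to $F(\bB) = \sum_{w \in \bB} F_w$ once \cref{thm: IVklvariation} is known, and that theorem's proof is omitted as ``a $(k,\ell)$-variation of the proof of \cref{thm: Icolor}'' --- which is exactly the computation you reconstruct, with the two substitutions, the extraction of $q^{k\maj(w) + \ell\neg(w)} = q^{\fmaj_{k,\ell}(w)}$, and the geometric-series summation all matching. Your explicit verification of the top-index step --- that under $\theta_m$ the vanishing of $y_m$ is equivalent to forcing $i_0 = m > i_1$ exactly when $0 \in \Des(w)$, because under the color order a non-descent run must have its barred entries before its unbarred ones, so $c_1 = 0$ forces the whole top block to have color $0$ --- is precisely the point the paper leaves tacit (and is where the argument genuinely uses the choice of order rather than holding for an arbitrary one), and you handle it correctly.
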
 
%

%
%%
%%%
%%%%
%%%%%----------------------APPS
%%%%
%%%
%%
%
\section{Applications}
\label{sec: applications}

	This section applies the corollaries of \cref{sec: special} to prove Euler--Mahonian identities on colored permutation groups. In particular, Section \ref{subsec: Euler--Mahonian identities} proves most of the Euler--Mahonian identities mentioned in \cref{subsec: colored} and introduces several new examples. \Cref{subsec: derangem} studies Mahonian and Euler--Mahonian distributions on derangements and their colored analogues and proves an Euler--Mahonian identity on derangements, refining a well known result of Wachs \cite[Theorem~4]{Wa89} (see also \cite[page~209]{GR93}). \Cref{subsec: involution} studies Eulerian and fix--Euler--Mahonian distributions on involutions and their colored analogues and generalizes a formula of D\'esarm\'enien and Foata \cite[Equation~(1.8)]{DF85} and Gessel and Reutenauer \cite[Equation~(7.3)]{GR93} (see also \cite[Section~5]{GZ20}) and a formula of Athanasiadis \cite[Equation~(40)]{Ath18}. Lastly, \cref{subsec: bimahonian} studies bimahonian and multivariate distributions, involving Eulerian and Mahonian statistics, on colored permutations. In what follows, we use the color order for colored permutation statistics and therefore omit the total order subscript in statistics and quasisymmetric functions.

%
%%
%%%
%%%%
%%%%%----------------------PERMUTATIONS
%%%%
%%%
%%
%
\subsection{Euler--Mahonian identities}
\label{subsec: Euler--Mahonian identities}

	\Cref{eq: applicationIcolorChowGesselmaj} is Biagioli--Zeng's identity \eqref{eq: (des, maj) length} and \cref{eq: applicationIcolormajtypeA} appears in Assaf's work \cite[Equation~(13)~for~$t=1$]{Ass10}, where the author uses the length order.  
\begin{corollary}
\label{cor: applicationIcolor}
We have 
\begin{equation}
\label{eq: applicationIcolormajtypeA}
\sum_{w \in \fS_{n, r}} \, q^{\maj(w)} \ = \ r^n [n]_q!
\end{equation}
and
\begin{align}
\sum_{m \ge 0} \, ( [m + 1]_q + (r-1) [m]_q)^n \, x^m  \ &= \ \frac{\sum_{w \in \fS_{n,r}} \, x^{\des(w)} q^{\maj(w)}}{(x; q)_{n+1}} \label{eq: applicationIcolorChowGesselmaj} \\ 
\sum_{m \ge 0} \, \ r^n [m+1]_q^n \, x^m \ &= \ \frac{\sum_{w \in \fS_{n,r}} \, x^{\des^* (w)} q^{\maj(w)}}{(x; q)_{n+1}}. \label{eq: applicationIcolorChowGesselmajtypeA}
\end{align}
\end{corollary}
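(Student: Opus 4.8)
The plan is to apply the three specializations $\ps^{(r)}$, $\ps_m^{(r)}$ and $\widetilde{\ps}_m^{(r)}$ to both sides of the product formula for $F_{<_c}(\fS_{n, r}; \bX^{(r)})$ furnished by \cref{lem: coloredhelp}, and then read off each identity from \cref{cor: Icolor} with $\cC = \fS_{n, r}$. The mechanism is that each specialization is a ring homomorphism and the right-hand side of \eqref{eq: F(Snr)} is the $n$-th power of the single linear form $L := \sum_{j=0}^{r-1} \sum_{i \ge 1} x_i^{(j)}$. Applying a specialization therefore merely replaces $L$ by the scalar to which $L$ evaluates under the relevant substitution, and the whole expression by the $n$-th power of that scalar. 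So the entire computation reduces to evaluating $L$ once per specialization.

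For \eqref{eq: applicationIcolormajtypeA} I would apply $\ps^{(r)}$, under which $x_i^{(j)} = q^{i-1}$ for all $j$; each of the $r$ colored blocks contributes $\sum_{i \ge 1} q^{i-1} = 1/(1-q)$, so $L$ specializes to $r/(1-q)$ and the right-hand side of \eqref{eq: F(Snr)} to $r^n/(1-q)^n$. Comparing with \eqref{eq: IcolormajF} gives $\sum_{w \in \fS_{n, r}} q^{\maj(w)} = r^n (q)_n/(1-q)^n$, and \eqref{eq: applicationIcolormajtypeA} follows from the elementary simplification $(q)_n/(1-q)^n = \prod_{i=1}^n [i]_q = [n]_q!$.

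For \eqref{eq: applicationIcolorChowGesselmaj}, applying $\ps_m^{(r)}$ the block $j=0$ contributes $\sum_{i=1}^m q^{i-1} = [m]_q$ while each of the remaining $r-1$ blocks contributes $\sum_{i=1}^{m-1} q^{i-1} = [m-1]_q$, so $L$ specializes to $[m]_q + (r-1)[m-1]_q$ and the right-hand side of \eqref{eq: F(Snr)} to $([m]_q + (r-1)[m-1]_q)^n$. Substituting into \eqref{eq: IcolordesmajF} and relabelling $m-1$ as the running summation index yields \eqref{eq: applicationIcolorChowGesselmaj}. Likewise, under $\widetilde{\ps}_m^{(r)}$ every block contributes $[m]_q$, so $L$ specializes to $r[m]_q$ and its $n$-th power to $r^n [m]_q^n$; substituting into \eqref{eq: Icolordes*majF} and again shifting the index gives \eqref{eq: applicationIcolorChowGesselmajtypeA}.

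There is no genuine obstacle here, since all the content lives in \cref{lem: coloredhelp} and \cref{cor: Icolor}; the three identities differ only in which truncation of the geometric series each colored block receives. The only points demanding care are the asymmetry in $\ps_m^{(r)}$ between the order-$m$ block $\bx^{(0)}$ and the order-$(m-1)$ blocks $\bx^{(1)}, \dots, \bx^{(r-1)}$ (which is exactly what produces the mixed coefficient $[m+1]_q + (r-1)[m]_q$ after reindexing), and the bookkeeping of the index shift that aligns $\sum_{m \ge 1}(\,\cdots)\,x^{m-1}$ with the stated sums over $m \ge 0$.
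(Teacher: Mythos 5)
Your proposal is correct and follows essentially the same route as the paper: specialize Equation \eqref{eq: F(Snr)} under $\ps^{(r)}$, $\ps_m^{(r)}$ and $\widetilde{\ps}_m^{(r)}$ to obtain $\left(r/(q)_1\right)^n$, $\left([m]_q + (r-1)[m-1]_q\right)^n$ and $r^n[m]_q^n$, then substitute into \cref{cor: Icolor} with $\cC = \fS_{n, r}$. The only difference is that you make explicit the simplification $(q)_n/(1-q)^n = [n]_q!$ and the reindexing $m \mapsto m+1$, which the paper's proof leaves implicit.
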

\begin{proof}
Specializing Equation \eqref{eq: F(Snr)} as in \cref{thm: Icolor} yields
\begin{align*}
\ps^{(r)} ( F (\fS_{n, r} ) ) \ &= \  \left(\frac{r}{(q)_1}\right)^n \\
\ps_m^{(r)} ( F ( \fS_{n, r} ) ) \ &= \ ( [m]_q + (r-1)[m-1]_q )^n \\
\widetilde{\ps}_m^{(r)} (F(\fS_{n, r})) \ &= \  r^n [m]_q^n.
\end{align*}
The proof follows by substituting in \cref{cor: Icolor} for $\cC = \fS_{n, r}$.
\end{proof}

	\Cref{eq: applicationIIcolorfmaj} computes the distribution of the flag major index (see \cref{eq: poincare series of Snr}) and \cref{eq: applicationIIcolordesfmaj} is Biagioli--Caselli, Chow--Mansour and Biagioli--Zeng's  identity \eqref{eq: (des, fmaj) color}.
\begin{corollary}
\label{cor: applicationIIcolor}
We have 
\begin{equation}
\label{eq: applicationIIcolorfmaj}
\sum_{w \in \fS_{n, r}} \, q^{\fmaj(w)} \ = \ [r]_q[2r]_q\cdots[nr]_q
\end{equation}
and
\begin{align}
\sum_{m \ge 0} \, [rm+1]_q^n \, x^m \ &= \ \frac{\sum_{w \in \fS_{n,r}} \, x^{\des(w)} q^{\fmaj(w)}}{(x; q^r)_{n+1}} \label{eq: applicationIIcolordesfmaj} \\ 
\sum_{m \ge 0} \, [r(m+1)]_q^n \, x^m \ &= \ \frac{\sum_{w \in \fS_{n,r}} \, x^{\des^*(w)} q^{\fmaj(w)}}{(x; q^r)_{n+1}}. \label{re: applicationIIcolordes*fmaj}
\end{align}
\end{corollary}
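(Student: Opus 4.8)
The plan is to run the proof of \cref{cor: applicationIcolor} almost verbatim, replacing the specializations $\ps^{(r)},\ps_m^{(r)},\widetilde{\ps}_m^{(r)}$ by the colored-shifted maps $\psi^{(r)},\psi_m^{(r)},\widetilde{\psi}_m^{(r)}$ of \cref{thm: IIcolor} and invoking \cref{cor: IIcolor} in place of \cref{cor: Icolor}. By \cref{lem: coloredhelp}, the generating function $F(\fS_{n,r})$ is the $n$-th power of the single sum $\Sigma := \sum_{j=0}^{r-1}\sum_{i\ge 1} x_i^{(j)}$ of all the variables, so the whole problem reduces to evaluating $\Sigma$ under each of the three specializations and raising the result to the $n$-th power.

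First I would handle the stable map $\psi^{(r)}$. Since $\psi^{(r)}(x_i^{(j)})=q^{r(i-1)+j}$, summing the geometric series in $i$ gives $\psi^{(r)}(\Sigma)=\big(\sum_{j=0}^{r-1}q^j\big)/(1-q^r)=[r]_q/(1-q^r)=1/(1-q)$, hence $\psi^{(r)}(F(\fS_{n,r}))=(1-q)^{-n}$. Substituting into \cref{eq: IIcolorfmajF} and simplifying $(q^r)_n/(1-q)^n=\prod_{i=1}^n (1-q^{ir})/(1-q)=[r]_q[2r]_q\cdots[nr]_q$ yields \cref{eq: applicationIIcolorfmaj}.

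The substance of the argument is in the two finite maps, where I expect the exponent bookkeeping to be the only real obstacle. For $\psi_m^{(r)}$ I would record exactly which powers of $q$ occur in $\psi_m^{(r)}(\Sigma)$: the color-$0$ variables $x_i^{(0)}$ with $1\le i\le m$ contribute the exponents $0,r,2r,\dots,r(m-1)$, i.e.\ all multiples of $r$ up to $r(m-1)$, while the variables $x_i^{(j)}$ with $1\le j\le r-1$ and $1\le i\le m-1$ contribute the exponents $r(i-1)+j$, which range over precisely the non-multiples of $r$ in $[1,r(m-1)-1]$. The key point---and the step most liable to an off-by-one error---is that these two families jointly enumerate each integer of $[0,r(m-1)]$ exactly once, so $\psi_m^{(r)}(\Sigma)=[r(m-1)+1]_q$ and therefore $\psi_m^{(r)}(F(\fS_{n,r}))=[r(m-1)+1]_q^n$. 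Plugging this into \cref{eq: IIcolordesfmajF} and reindexing the summation index gives \cref{eq: applicationIIcolordesfmaj}.

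Finally, $\widetilde{\psi}_m^{(r)}$ differs from $\psi_m^{(r)}$ only by the extra substitutions $x_m^{(j)}=q^{r(m-1)+j}$ for $1\le j\le r-1$, which append the exponents $r(m-1)+1,\dots,rm-1$ to the list above; the same bijective bookkeeping now shows that the exponents exhaust $[0,rm-1]$, so $\widetilde{\psi}_m^{(r)}(\Sigma)=[rm]_q$ and $\widetilde{\psi}_m^{(r)}(F(\fS_{n,r}))=[rm]_q^n$. Substituting into \cref{eq: IIcolordes*fmajF} and reindexing produces \cref{re: applicationIIcolordes*fmaj}. All three identities then follow from the single application of \cref{cor: IIcolor} to $\cC=\fS_{n,r}$.
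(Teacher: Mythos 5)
Your proposal is correct and follows exactly the paper's own route: specialize the identity $F(\fS_{n,r};\bX^{(r)})=\bigl(\sum_{j,i} x_i^{(j)}\bigr)^n$ of \cref{lem: coloredhelp} under $\psi^{(r)},\psi_m^{(r)},\widetilde{\psi}_m^{(r)}$, obtaining $(1-q)^{-n}$, $[r(m-1)+1]_q^n$ and $[rm]_q^n$ respectively, and substitute into \cref{cor: IIcolor} for $\cC=\fS_{n,r}$. Your careful verification that the exponents $r(i-1)+j$ tile the interval $[0,r(m-1)]$ (resp.\ $[0,rm-1]$) exactly once is precisely the bookkeeping the paper leaves implicit in its displayed expansion $1+q+\cdots+q^{r(m-1)}$.
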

\begin{proof}
Specializing Equation \eqref{eq: F(Snr)} as in \cref{thm: IIcolor} yields
\[
\psi^{(r)} ( F(\fS_{n, r})) \ = \  \frac{1}{(q)_1^n} 
\]
and
\begin{align*}
\psi_m^{(r)} ( F(\fS_{n, r}) ) \ &= \ (1 + q + \cdots + q^{r-1} + q^r + q^{r+1} + \cdots \\ 
&+ q^{2r - 1} + q^{2r} + \cdots + q^{(m-1)r - 1} + q^{(m-1)r})^n \\
&= \ [r(m-1) +1]_q^n  
\end{align*}
and
\[
\widetilde{\psi}_m^{(r)} ( F(\fS_{n, r}) ) \ = \ [rm]_q^n.
\]
The proof follows by substituting in \cref{cor: IIcolor} for $\cC = \fS_{n, r}$.
\end{proof}

	\Cref{eq: applicationIIIcolorfdesfmaj} is Bagno and Biagioli's identity \eqref{eq: (fdes, fmaj) color}.
\begin{corollary}
\label{cor: applicationIIIcolor}
We have 
\begin{equation}
\label{eq: applicationIIIcolorfdesfmaj}
\sum_{m \ge 0} \, [m+1]_q^n \, x^m   \ = \ \frac{\sum_{w \in \fS_{n, r}} \, x^{\fdes(w)} q^{\fmaj(w)}}{(1-x)(1 - x^r q^r)(1 - x^r q^{2r}) \cdots (1 - x^r q^{nr})}.
\end{equation}
\end{corollary}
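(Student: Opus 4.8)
The plan is to follow the same pattern used in the proofs of \cref{cor: applicationIcolor} and \cref{cor: applicationIIcolor}, now applying the specialization $\phi_m^{(r)}$ supplied by \cref{thm: IIIcolor}. Setting $\cC = \fS_{n, r}$ in \cref{eq: IIIcolorfdesfmajF}, the right-hand side becomes \emph{verbatim} the right-hand side of \cref{eq: applicationIIIcolorfdesfmaj}. Thus the whole task reduces to evaluating the left-hand side $\sum_{m \ge 1} \phi_m^{(r)}\bigl(F(\fS_{n, r})\bigr)\, x^{m-1}$ and recognizing it as $\sum_{m \ge 0} [m+1]_q^n \, x^m$.

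First I would invoke \cref{eq: F(Snr)} to write $F_{<_c}(\fS_{n, r}; \bX^{(r)}) = S^n$, where $S := \sum_{j=0}^{r-1} \sum_{i \ge 1} x_i^{(j)}$ is the sum of all the variables. Since $\phi_m^{(r)}$ is a ring homomorphism, $\phi_m^{(r)}\bigl(F(\fS_{n, r})\bigr) = \bigl(\phi_m^{(r)}(S)\bigr)^n$, so it suffices to compute the single scalar $\phi_m^{(r)}(S)$. The key point is to track which variables survive the specialization: only those $x_i^{(j)}$ with $i \equiv 1 \pmod{r}$ are nonzero, and writing $i = kr+1$ such a variable is sent to $q^{i-1+j} = q^{kr+j}$. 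As $k$ ranges over $0, 1, 2, \dots$ and $j$ over $0, 1, \dots, r-1$, the exponents $kr + j$ enumerate the nonnegative integers consecutively, each exactly once. The two side conditions built into the definition of $\phi_m^{(r)}$ — namely $x_m^{(1)} = \cdots = x_m^{(r-1)} = 0$, and the prescription that the last nonzero substitution is $x_{m-c+1}^{(c-1)} = q^{m-1}$ when $m \equiv c \pmod{r}$ — are precisely what truncate this list at exponent $m-1$, uniformly in the residue of $m$ modulo $r$. Hence $\phi_m^{(r)}(S) = 1 + q + \cdots + q^{m-1} = [m]_q$, and therefore $\phi_m^{(r)}\bigl(F(\fS_{n, r})\bigr) = [m]_q^n$.

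The one delicate step — and the part I would write out most carefully — is exactly this last piece of bookkeeping: confirming that the surviving exponents form the interval $\{0, 1, \dots, m-1\}$ with neither a gap nor any overflow beyond $m-1$, by checking the cases $c = 1, 2, \dots, r$ against the cutoff rule (noting that the last substitution $x_{m-c+1}^{(c-1)}$ does carry exponent $(m-c+1)-1+(c-1) = m-1$). Once this is in hand, substituting $\phi_m^{(r)}\bigl(F(\fS_{n, r})\bigr) = [m]_q^n$ into \cref{eq: IIIcolorfdesfmajF} and reindexing via $m \mapsto m+1$ yields
\[
\sum_{m \ge 1} [m]_q^n \, x^{m-1} \ = \ \sum_{m \ge 0} [m+1]_q^n \, x^m \ = \ \frac{\sum_{w \in \fS_{n, r}} x^{\fdes(w)} q^{\fmaj(w)}}{(1-x)(1 - x^r q^r) \cdots (1 - x^r q^{nr})},
\]
which is exactly \cref{eq: applicationIIIcolorfdesfmaj}.
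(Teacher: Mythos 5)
Your proposal is correct and follows essentially the same route as the paper: both specialize \cref{eq: F(Snr)} via $\phi_m^{(r)}$ to obtain $\phi_m^{(r)}(F(\fS_{n,r})) = [m]_q^n$ and then substitute into \cref{eq: IIIcolorfdesfmajF} for $\cC = \fS_{n,r}$. The exponent bookkeeping you flag as the delicate step is exactly the point the paper disposes of with its one-line remark that the summand $q^{m-1}$ arises from the substitution $x_{m-c+1}^{(c-1)}$ according to $m \equiv c \pmod{r}$, so your more explicit verification is a faithful (indeed slightly fuller) rendering of the same argument.
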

\begin{proof}
Specializing Equation \eqref{eq: F(Snr)} as in \cref{thm: IIIcolor} yields
\[
\phi_m^{(r)} (F(\fS_{n, r})) \ = \ (1 + q + \cdots + q^{m-1})^n = [m]_q^n.
\]
Notice that, from the definition of $\phi_m^{(r)}$, the summand $q^{m - 1}$ in the expression above comes from the substitution $x_{m - c + 1}^{(c-1)}$, according to whether $m \equiv c \pmod{r}$, for $c = 1, 2, \dots, r $, respectively. The proof follows by substituting in \cref{eq: IIIcolorfdesfmajF} for $\cC = \fS_{n, r}$.
\end{proof}

	The following theorem computes a formula for the $\fmaj_{k, \ell}$-distribution on $\fB_n$, which refines the $\fmaj$-distribution of Adin and Roichman \cite[Theorem~2]{AR01}. The right-hand side of \cref{eq: applicationIVfmaj} for $(k, \ell) = (2, 1)$, appears in this form in \cite[Theorem~3.5~for~$t=1$]{Fir05}. Furthermore, it computes Euler--Mahonian identities on $\fB_n$ for the pairs $(\des, \fmaj_{k, \ell})$ and $(\des^*, \fmaj_{k, \ell})$. \Cref{eq: applicationIVdesfmaj} reduces to Chow and Gessel's formulas \cite[Equation~(26)]{CG07} and \cite[Theorem~3.7]{CG07} for $(k, \ell) = (1, 0) $ and $(k, \ell) = (2, 1)$, respectively.
\begin{corollary}
\label{cor: applicationIVkl}
We have 
\begin{equation}
\label{eq: applicationIVfmaj}
\sum_{w \in \fB_n} \, q^{\fmaj_{k, \ell}(w)} \ = \  (1 + q^\ell)^n [n]_{q^k}!,
\end{equation}
and
\begin{align}
\sum_{m \ge 0} \, ([m+1]_{q^k} + q^\ell[m]_{q^k})^n \, x^m  \ &= \ \frac{\sum_{w \in \fB_n} \, x^{\des (w)} q^{\fmaj_{k, \ell}(w)}}{(x; q^k)_{n+1}} \label{eq: applicationIVdesfmaj}  \\
\sum_{m \ge 0} \, \ (1 + q^\ell)^n [m+1]_{q^k}^n \, x^m  \ &= \ \frac{\sum_{w \in \fB_n} \, x^{\des^* (w)} q^{\fmaj_{k, \ell}(w)}}{(x; q^k)_{n+1}}. \label{eq: applicationIVdes*fmaj}
\end{align}
\end{corollary}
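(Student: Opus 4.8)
The plan is to follow the template established in the proofs of Corollaries~\ref{cor: applicationIcolor}, \ref{cor: applicationIIcolor} and \ref{cor: applicationIIIcolor}: specialize the colored quasisymmetric generating function of the whole group $\fB_n$ by each of the three ring homomorphisms $\theta$, $\theta_m$ and $\widetilde{\theta}_m$, and then substitute the outcomes into \cref{cor: IVklvariation} for $\bB = \fB_n$.

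First I would apply \cref{lem: coloredhelp} in the case $r = 2$. Writing $\bx$ and $\by$ for $\bx^{(0)}$ and $\bx^{(1)}$ as in \cref{sec: special}, this gives
\[
F(\fB_n; \bx, \by) \ = \ (x_1 + x_2 + \cdots + y_1 + y_2 + \cdots)^n.
\]
Since each specialization is a ring homomorphism, the computation reduces to evaluating the linear form $x_1 + x_2 + \cdots + y_1 + y_2 + \cdots$ under each map and raising to the $n$-th power. Under $\theta$ the two geometric series combine as $\sum_{i \ge 0} q^{ki} + q^\ell \sum_{i \ge 0} q^{ki} = (1 + q^\ell)/(1 - q^k)$, so that $\theta(F(\fB_n)) = (1 + q^\ell)^n/(1 - q^k)^n$. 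Under $\theta_m$ the $\bx$-series is truncated at $i = m$ and the $\by$-series at $i = m-1$, giving $[m]_{q^k} + q^\ell [m-1]_{q^k}$; under $\widetilde{\theta}_m$ the extra substitution $y_m = q^{k(m-1) + \ell}$ extends the $\by$-series to $i = m$ as well, so the two truncated sums agree and the value is $(1 + q^\ell)[m]_{q^k}$.

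The remaining step is bookkeeping. Substituting $\theta(F(\fB_n))$ into \eqref{eq: IVfmaj F} and using the factorization $(q^k)_n = (1-q^k)^n\,[n]_{q^k}!$ to clear denominators yields \eqref{eq: applicationIVfmaj}. Substituting $\theta_m(F(\fB_n)) = ([m]_{q^k} + q^\ell [m-1]_{q^k})^n$ into \eqref{eq: IVdesfmajF} and $\widetilde{\theta}_m(F(\fB_n)) = (1 + q^\ell)^n[m]_{q^k}^n$ into \eqref{eq: IVdes*fmajF}, and then reindexing the outer summation by $m \mapsto m+1$ in each case, produces Equations \eqref{eq: applicationIVdesfmaj} and \eqref{eq: applicationIVdes*fmaj}, respectively.

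I do not expect a genuine obstacle here: all the substance is already carried by \cref{lem: coloredhelp}, \cref{thm: IVklvariation} and \cref{cor: IVklvariation}, and what remains is the summation of geometric series together with a shift of index. The only points meriting a little care are recognizing the factorization $(q^k)_n = (1-q^k)^n\,[n]_{q^k}!$, which is what collapses the Mahonian formula to its stated closed form, and keeping straight the difference in the truncation of the $\by$-series between $\theta_m$ and $\widetilde{\theta}_m$---precisely the feature that distinguishes the $\des$-refinement from the $\des^*$-refinement.
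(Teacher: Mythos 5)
Your proposal is correct and follows exactly the paper's route: the paper's proof is the one-line instruction to specialize Equation \eqref{eq: F(Snr)} for $r=2$ as in \cref{thm: IVklvariation} and substitute into \cref{cor: IVklvariation} for $\bB = \fB_n$, which is precisely what you carry out, with the intermediate evaluations $\theta(F(\fB_n)) = (1+q^\ell)^n/(1-q^k)^n$, $\theta_m(F(\fB_n)) = ([m]_{q^k} + q^\ell[m-1]_{q^k})^n$ and $\widetilde{\theta}_m(F(\fB_n)) = (1+q^\ell)^n[m]_{q^k}^n$ all checking out, as does the factorization $(q^k)_n = (1-q^k)^n\,[n]_{q^k}!$ and the index shift $m \mapsto m+1$.
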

\begin{proof}
 The proof follows by specializing Equation \eqref{eq: F(Snr)} for $r=2$ as in \cref{thm: IVklvariation} and substituting in \cref{cor: IVklvariation} for $\bB = \fB_n$.
\end{proof}
%

%
%%
%%%
%%%%
%%%%%----------------------DERANGEMENTS
%%%%
%%%
%%
%
\subsection{Derangements}
\label{subsec: derangem}

	Permutations in $\fS_n$ without fixed points are called \emph{derangements}. Let $\dD_n$ be the set of all derangements in $\fS_n$. For a positive integer $n$, let
\[
D_n (x, q) \ := \ \sum_{w \in \dD_n} \, x^{\des(w)} q^{\maj(w)}
\]
be the $n$-th \emph{$(x, q)$-derangement~polynomial} and $d_n (q) := D_n(1, q)$ the \emph{$q$-derangement~numbers}. The $q$-derangement numbers satisfy (recall Formula \eqref{eq: Wachs} from \cref{sec: intro}).
\begin{equation}
\label{eq: Wachsagain}
d_n (q) \ = \ [n]_q! \sum_{k = 0}^n \, (-1)^k \, \frac{q^{\binom{k}{2}}}{[k]_q!}.
\end{equation}
as proved bijectively by Wachs \cite[Theorem~4]{Wa89} and later by Gessel and Reutenauer \cite[page~209]{GR93}. Eulerian and Mahonian distributions on derangements have been studied by many authors (see, for example, \cite[Section~2.1.4]{Ath18} and references therein). For nonnegative integers $0 \le k \le n$, let
\[
\binom{n}{k}_q \ := \ \dfrac{[n]_q!}{[k]_q![n-k]_q!}
\]
be a $q$-binomial coefficient. The following theorem provides an Euler--Mahonian identity on derangements in $\fS_n$, which refines Wachs' formula \eqref{eq: Wachsagain}.
\begin{theorem}
\label{thm: Euler--Mahonianderangements}
For a positive integer $n$, we have 
\begin{equation}
\label{eq: Euler--Mahonianderangements}
\sum_{m \ge 0} \, \sum_{k = 0}^n \, (-1)^k q^{\binom{k}{2}} \binom{m+1}{k}_q \, [m+1]_q^{n - k} \, x^m \ = \ \frac{D_n(x, q)}{(x; q)_{n+1}}.
\end{equation}
\end{theorem}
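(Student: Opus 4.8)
The plan is to recognize the right-hand side of \eqref{eq: Euler--Mahonianderangements} as the output of the principal-specialization machinery of \cref{sec: special} applied to the derangement set. Specializing \eqref{eq: IcolordesmajF} (equivalently \eqref{eq: psmF(A;x)}) to $r=1$ and $\cC=\dD_n$ gives
\[
\sum_{m\ge 1}\ps_m\bigl(F(\dD_n)\bigr)\,x^{m-1}\ =\ \frac{\sum_{w\in\dD_n}x^{\des(w)}q^{\maj(w)}}{(x;q)_{n+1}}\ =\ \frac{D_n(x,q)}{(x;q)_{n+1}},
\]
so after the reindexing $m\mapsto m+1$ the theorem reduces to the single identity $\ps_{m+1}\bigl(F(\dD_n)\bigr)=\sum_{k=0}^n(-1)^k q^{\binom{k}{2}}\binom{m+1}{k}_q[m+1]_q^{n-k}$ for every $m\ge 0$. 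Thus everything comes down to computing one principal specialization of the quasisymmetric generating function of $\dD_n$.

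Next I would invoke the Gessel--Reutenauer formula \cite[Theorem~8.1]{GR93} for this generating function. Grouping permutations by cycle type and removing the factor contributed by the fixed points (the $1$-cycle factor $\sum_{j\ge 0}h_j t^j$) from the full-group series $\sum_n F(\fS_n)t^n=(1-p_1 t)^{-1}$ yields
\[
\sum_{n\ge 0}F(\dD_n;\bx)\,t^n\ =\ \frac{\prod_{i\ge 1}(1-x_i t)}{1-p_1 t}\ =\ \Bigl(\sum_{k\ge 0}(-1)^k e_k(\bx)\,t^k\Bigr)\Bigl(\sum_{j\ge 0}h_1(\bx)^{j}\,t^j\Bigr),
\]
and extracting the coefficient of $t^n$ gives the symmetric, Schur-positive expression $F(\dD_n;\bx)=\sum_{k=0}^n(-1)^k e_k(\bx)\,h_1(\bx)^{n-k}$. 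One can double-check the small cases $n=1,2$, where the right-hand side collapses to $0$ and to $e_2$, against $\dD_1=\varnothing$ and $\dD_2=\{21\}$.

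Finally I would apply $\ps_{m+1}$ termwise, using that it is a ring homomorphism. Here $\ps_{m+1}$ substitutes $1,q,\dots,q^m$ for the variables, so $\ps_{m+1}(h_1)=1+q+\cdots+q^m=[m+1]_q$ and hence $\ps_{m+1}(h_1^{n-k})=[m+1]_q^{n-k}$, while the principal specialization of the elementary symmetric function is the standard Gaussian-binomial evaluation $\ps_{m+1}(e_k)=\sum_{0\le a_1<\cdots<a_k\le m}q^{a_1+\cdots+a_k}=q^{\binom{k}{2}}\binom{m+1}{k}_q$. Substituting these two evaluations produces exactly $\sum_{k=0}^n(-1)^k q^{\binom{k}{2}}\binom{m+1}{k}_q[m+1]_q^{n-k}$, which closes the reduction from the first paragraph.

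The genuinely delicate points are bookkeeping rather than conceptual. First, one must respect the reversed convention for the fundamental (colored) quasisymmetric functions adopted in \eqref{eq: fundamental colored quasi}, which is precisely what makes $\maj$ (and not a comajor index) appear directly in \eqref{eq: IcolordesmajF}, as noted after \cref{thm: Icolor}; this guarantees the numerator above is $D_n(x,q)$ on the nose. Second, the index shift $m\mapsto m+1$ between the specialization formula and the statement must be tracked carefully, so that the Gaussian binomial is $\binom{m+1}{k}_q$ rather than $\binom{m}{k}_q$. With those conventions fixed, no estimates are needed: the symmetry of $F(\dD_n;\bx)$ legitimizes the termwise specialization, and the result follows.
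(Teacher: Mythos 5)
Your proposal is correct and follows essentially the same route as the paper's proof: reduce via the specialization identity \eqref{eq: psmF(A;x)} for $\aA=\dD_n$, then apply $\ps_m$ to the Gessel--Reutenauer expansion \eqref{eq: GesselReutenauer} using the standard evaluations $\ps_m(e_k)=q^{\binom{k}{2}}\binom{m}{k}_q$ and $\ps_m(h_1^{n-k})=[m]_q^{n-k}$, with the same index shift $m\mapsto m+1$. The only difference is that you also sketch a (correct) generating-function derivation of \eqref{eq: GesselReutenauer}, which the paper simply cites from \cite[Theorem~8.1]{GR93}.
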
 
The idea of the proof is to take the principal specialization of order $m$ of $F(\dD_n; \bx)$ and apply Formula \eqref{eq: psmF(A;x)} for $\aA = \dD_n$. Before the proof of \cref{thm: Euler--Mahonianderangements}, we need to recall a few facts.  Gessel and Reutenauer \cite[Theorem~8.1]{GR93} computed $F(\dD_n; \bx)$, as follows
\begin{equation}
\label{eq: GesselReutenauer}
F(\dD_n; \bx) \ = \ \sum_{k = 0}^n \, (-1)^k e_k(\bx) h_1(\bx)^{n-k},
\end{equation}
where $e_k$ (resp. $h_k$) is the \emph{$k$-th~elementary} (resp. \emph{complete~homogeneous}) symmetric function on $\bx$. The principal specializations of order $m$ of these symmetric functions are given by \cite[Proposition~7.8.3]{StaEC2}
\begin{align}
\ps_m(e_k(\bx)) \ &= \ q^{\binom{k}{2}} \binom{m}{k}_q  \label{eq: psme(x)}\\ 
\ps_m(h_k(\bx)) \ &= \ \binom{m + k - 1}{k}_q. \label{eq: psmh(x)}
\end{align}
We are now ready to give the proof of \cref{thm: Euler--Mahonianderangements}.

\begin{proof}[Proof of \cref{thm: Euler--Mahonianderangements}]
Setting $\aA = \dD_n$, Formula \eqref{eq: psmF(A;x)} becomes
\begin{equation}
\label{eq: derangementhelp}
\sum_{m \ge 1} \, \ps_m (F(\dD_n; \bx)) \, x^{m -1}  \ = \ \frac{D_n(x, q)}{(x; q)_{n +1}}.
\end{equation}
Taking the principal specialization of order $m$ of Formula \eqref{eq: GesselReutenauer} and compute, using Equations \eqref{eq: psme(x)} and \eqref{eq: psmh(x)}, yields
\begin{equation}
\label{eq: derangementhelpp}
\ps_m(F(\dD_n; \bx)) \ = \ \sum_{k = 0}^n \, (-1)^k q^{\binom{k}{2}} \binom{m}{k}_q [m]_q^{n - k}.
\end{equation}
The proof follows by substituting \cref{eq: derangementhelpp} in \eqref{eq: derangementhelp}.
\end{proof}

	Another proof of \cref{eq: Wachsagain} can be obtained by considering the stable principal specialization of Formula \eqref{eq: GesselReutenauer} instead and following the steps of the previous proof, as done by Gessel and Reutenauer \cite[Theorem~8.4]{GR93}. Next we consider a colored analogue of \cref{thm: Euler--Mahonianderangements} on colored permutation groups, which appears to be new even in the case $r=2$, of the hyperoctahedral group $\fB_n$.

	An element of $\fS_{n, r}$ without fixed points of zero color is called a \emph{colored~derangement}. Let $\dD_n^r$ be the set of all colored derangements in $\fS_{n, r}$. Faliharimalala and Zeng \cite[Equation~(2.7)]{FZ08} (see also \cite[Theorem~2.1]{Ass10}, where colored derangements are called cyclic derangements) proved the following formula
\begin{equation}
\label{eq: FaliharimalalaZeng}
|\dD_n^r| \ = \ r^n n! \sum_{k = 0}^n \frac{(-1)^k}{r^k k!},
\end{equation}
which generalizes the well known formula \cite[Equation~(2.11)]{StaEC1} for the number of derangements in the symmetric group $\fS_n$. In a subsequent paper \cite[Equation~(2.5)]{FZ11}, where the authors use the color order, they provide a formula for the colored $q$-derangement numbers 
\begin{equation}
\label{eq: FaliharimalalaZengfmaj}
\sum_{w \in \dD_n^r} \, q^{\fmaj(w)} \ = \ 
 [r]_q[2r]_q \cdots [nr]_q \, \sum_{k = 0}^n \, (-1)^k \, \frac{q^{r\binom{k}{2}}}{[r]_q[2r]_q \cdots [kr]_q},
\end{equation}
which reduces to Wachs' formula \eqref{eq: Wachsagain} for $r=1$ and generalizes a formula of Chow \cite[Theorem~5]{Cho06} (see also \cite[Equation~(6.8)~for~$Z = 1$]{FH08}) for $r=2$. The following theorem refines \cref{eq: FaliharimalalaZengfmaj}, by providing an Euler--Mahonian identity on colored derangements and reduces to \cref{thm: Euler--Mahonianderangements} for $r=1$.
\begin{theorem}
\label{thm: Euler--Mahoniancoloredderangements}
For a positive integer $n$, we have
\begin{equation}
\label{eq: Euler--Mahoniancoloredderangements}
\sum_{m \ge 0} \, \sum_{k = 0}^n \, (-1)^k q^{r\binom{k}{2}} \binom{m+1}{k}_{q^r} \, [rm + 1]_q^{n - k} \, x^m  \ = \ \frac{\sum_{w \in \dD_n^r} \, x^{\des(w)} q^{\fmaj(w)}}{(x; q^r)_{n+1}}. 
\end{equation}
\end{theorem}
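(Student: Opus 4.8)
The plan is to follow the template of the proof of \cref{thm: Euler--Mahonianderangements} closely, with the principal specialization of order $m$ replaced by the shifted specialization $\psi_m^{(r)}$ of \cref{thm: IIcolor}. Setting $\cC = \dD_n^r$ in \eqref{eq: IIcolordesfmajF} gives
\[
\sum_{m \ge 1} \, \psi_m^{(r)}(F(\dD_n^r)) \, x^{m-1} \ = \ \frac{\sum_{w \in \dD_n^r} \, x^{\des(w)} q^{\fmaj(w)}}{(x; q^r)_{n+1}},
\]
so the entire problem reduces to computing the single series on the left; a reindexing $m \mapsto m+1$ will then convert it into the double sum of \eqref{eq: Euler--Mahoniancoloredderangements}.

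The essential ingredient, and the step I expect to be the main obstacle, is a colored analogue of the Gessel--Reutenauer formula \eqref{eq: GesselReutenauer}. Since a colored derangement is exactly a colored permutation with no fixed point of color $0$, I would prove
\[
F(\dD_n^r; \bX^{(r)}) \ = \ \sum_{k=0}^n \, (-1)^k \, e_k(\bx^{(0)}) \, \bigl( h_1(\bx^{(0)}) + h_1(\bx^{(1)}) + \cdots + h_1(\bx^{(r-1)}) \bigr)^{n-k},
\]
which collapses to \eqref{eq: GesselReutenauer} at $r = 1$. The natural approach is inclusion--exclusion on the set of color-$0$ fixed points: writing $\mathbf 1_{\dD_n^r} = \sum_{S \subseteq [n]} (-1)^{|S|} \mathbf 1\{w(i) = i^0 \text{ for all } i \in S\}$ and summing the corresponding colored quasisymmetric functions, the permutations fixing a given $k$-subset with color $0$ and acting freely on the complement should contribute $e_k(\bx^{(0)})$ from the forced fixed values together with the free factor $(h_1(\bx^{(0)}) + \cdots + h_1(\bx^{(r-1)}))^{n-k} = F(\fS_{n-k,r}; \bX^{(r)})$, the latter by \eqref{eq: F(Snr)}. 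As in the uncolored case, the elementary symmetric function $e_k$ (rather than $h_k$) appears because the $k$ fixed values are forced into strictly increasing order, precisely the strict-inequality condition defining $e_k$. Making this factorization rigorous --- decoupling the descent structure of the fixed part from that of the free part --- is the delicate point; alternatively one can bypass it by computing the colored Frobenius characteristic $\ch_r$ of the class function $\mathbf 1_{\dD_n^r}$ and passing to quasisymmetric functions through the colored Robinson--Schensted correspondence and \eqref{eq: schurtocoloredfun}, exactly as in the proof of \cref{lem: coloredhelp}.

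Granting the displayed formula, the rest is a routine specialization. Because $\psi_m^{(r)}$ restricts on $\bx^{(0)}$ to the principal specialization of order $m$ in the variable $q^r$, formula \eqref{eq: psme(x)} yields $\psi_m^{(r)}(e_k(\bx^{(0)})) = q^{r\binom{k}{2}} \binom{m}{k}_{q^r}$, while the free factor specializes to $\psi_m^{(r)}(h_1(\bx^{(0)}) + \cdots + h_1(\bx^{(r-1)})) = [r(m-1)+1]_q$, the degree-one instance of the computation of $\psi_m^{(r)}(F(\fS_{n,r}))$ already carried out in the proof of \cref{cor: applicationIIcolor}. Multiplying through, I obtain
\[
\psi_m^{(r)}(F(\dD_n^r)) \ = \ \sum_{k=0}^n \, (-1)^k \, q^{r\binom{k}{2}} \, \binom{m}{k}_{q^r} \, [r(m-1)+1]_q^{\,n-k}.
\]
Substituting this into the display of the first paragraph and replacing $m$ by $m+1$ sends $\binom{m}{k}_{q^r} \mapsto \binom{m+1}{k}_{q^r}$ and $[r(m-1)+1]_q \mapsto [rm+1]_q$, producing exactly \eqref{eq: Euler--Mahoniancoloredderangements}. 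As an independent check on the colored Gessel--Reutenauer formula, applying instead the stable specialization $\psi^{(r)}$ via \eqref{eq: IIcolorfmajF} with $\cC = \dD_n^r$ should reproduce the Faliharimalala--Zeng identity \eqref{eq: FaliharimalalaZengfmaj}.
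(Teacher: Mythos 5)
Your overall route coincides with the paper's own proof: the paper also obtains \eqref{eq: Euler--Mahoniancoloredderangements} by specializing the colored Gessel--Reutenauer formula $F(\dD_n^r; \bX^{(r)}) = \sum_{k=0}^n (-1)^k e_k(\bx^{(0)}) h_1(\bX^{(r)})^{n-k}$ as in \cref{thm: IIcolor} and substituting into \eqref{eq: IIcolordesfmajF} for $\cC = \dD_n^r$, and your specialization computations $\psi_m^{(r)}(e_k(\bx^{(0)})) = q^{r\binom{k}{2}}\binom{m}{k}_{q^r}$, $\psi_m^{(r)}(h_1(\bX^{(r)})) = [r(m-1)+1]_q$, the reindexing $m \mapsto m+1$, and even your closing remark that $\psi^{(r)}$ recovers \eqref{eq: FaliharimalalaZengfmaj} match the paper verbatim. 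The difference is that the paper does not prove the key formula from scratch: it invokes the fact that the proof of \cite[Theorem~7.3]{AAER17} generalizes from $r=2$ to arbitrary $r$, using \cite[Theorem~16]{Poi98}.

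The genuine gap is in your proposed inclusion--exclusion proof of that key formula. The per-subset factorization it relies on is false: for fixed $S \subseteq [n]$ with $|S| = k$, the quasisymmetric generating function of $\{w \in \fS_{n,r} : w(i) = i^0 \text{ for all } i \in S\}$ is \emph{not} $e_k(\bx^{(0)})\, h_1(\bX^{(r)})^{n-k}$. Already for $r=1$, $n=2$, $S=\{1\}$, the only permutation fixing $1$ is the identity, whose quasisymmetric function is $F_{2,\emptyset} = h_2$, whereas the claimed product is $e_1 h_1 = h_2 + e_2$. Quasisymmetric generating functions are position-sensitive, so the descent structure of the fixed part does not decouple from that of the free part, and your heuristic that the forced increasing fixed values produce $e_k$ points the wrong way: in the correct decomposition, which is by the \emph{number} (not the position set) of color-$0$ fixed points, it is $h_k$ that appears,
\[
h_1(\bX^{(r)})^n \ = \ F(\fS_{n,r}; \bX^{(r)}) \ = \ \sum_{k=0}^n \, h_k(\bx^{(0)}) \, F(\dD_{n-k}^r; \bX^{(r)}),
\]
and this identity is nontrivial, resting on the symmetry and product structure of the cycle-type generating functions (Poirier's \cite[Theorem~16]{Poi98} --- exactly the input the paper imports). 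Multiplying the associated generating function in $z$ by $\sum_{k \ge 0} (-1)^k e_k(\bx^{(0)}) z^k = \bigl(\sum_{k \ge 0} h_k(\bx^{(0)}) z^k\bigr)^{-1}$ is then what produces the alternating $e_k$'s; the signs and the switch from $h$ to $e$ come from this inversion, not from forced strict inequalities. Your fallback suggestion via $\ch_r$ of the indicator of $\dD_n^r$ (which is legitimately a union of conjugacy classes, since color-$0$ fixed points are visible in the cycle type) is in the right spirit, but as stated it is only a pointer: colored Robinson--Schensted controls descent sets, and relating $F(\cC)$ for conjugacy-closed $\cC$ to characters again requires precisely Poirier's cycle-type machinery. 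In short, the specialization half of your argument is complete and identical to the paper's; the combinatorial half must be repaired along the lines above or replaced by the citation the paper uses.
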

\begin{proof}
Adin et al. \cite[Theorem~7.3]{AAER17} recently proved a signed analogue of \cref{eq: GesselReutenauer}. The following colored analogue
\begin{equation}
\label{eq: coloredgesselreutenauer}
F(\dD_n^r; \bX^{(r)}) \ = \ 
\sum_{k = 0}^n \, (-1)^k e_k(\bx^{(0)}) h_1(\bX^{(r)})^{n -k},
\end{equation}
where
\[
h_1(\bX^{(r)}) \ := \ x_1^{(0)} + x_2^{(0)} + \cdots + x_1^{(r-1)} + x_2^{(r-1)} + \cdots
\]
of \cref{eq: GesselReutenauer}, holds by trivially generalizing Adin et al.'s argument in the proof of \cite[Theorem~7.3]{AAER17} for general $r$ and using \cite[Theorem~16]{Poi98}. 
%We omit the details, since they do not play any role in this paper. 
Specializing \cref{eq: coloredgesselreutenauer} as in \cref{thm: IIcolor} and compute, using Equation \eqref{eq: psme(x)} and the calculation in the proof of \cref{cor: applicationIIcolor}, yields 
\begin{equation}
\label{eq: coloredderangementhelp}
\psi_m^{(r)} ( F ( \dD_n^r) ) \ = \ \sum_{k = 0}^n \, (-1)^k q^{r\binom{k}{2}} \binom{m}{k}_{q^r} \, [r(m - 1) + 1]_q^{n - k}.
\end{equation}
The proof follows by substituting \cref{eq: coloredderangementhelp} in \cref{eq: IIcolordesfmajF} for $\cC = \dD_n^r$.
\end{proof}
	Another proof of \cref{eq: FaliharimalalaZengfmaj} can be obtained by considering the $\psi^{(r)}$ specialization  of \cref{eq: coloredgesselreutenauer} and substituting in \cref{cor: IIcolor} for $\cC = \dD_n^r$. Furthermore, taking the $\ps^{(r)}$ specialization instead, yields the following $\maj$-distribution on colored derangements, as computed by Assaf in \cite[Theorem~3.2~for~$t=1$]{Ass10}.
\begin{corollary}
\label{cor: assaf}
For a positive integer $n$, we have
\begin{equation}
\label{eq: assaf}
\sum_{w \in \dD_n^r} \, q^{\maj(w)} \ = \ r^n [n]_q! \sum_{k = 0}^n \, (-1)^k \frac{q^{\binom{k}{2}}}{r^k [k]_q!}.
\end{equation}
\end{corollary}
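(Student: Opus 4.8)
The plan is to mirror the proof of \cref{thm: Euler--Mahoniancoloredderangements}, replacing the order-$m$ specialization $\psi_m^{(r)}$ by the stable specialization $\ps^{(r)}$ and feeding the outcome into \cref{eq: IcolormajF} rather than \eqref{eq: IIcolordesfmajF}. Concretely, I would apply $\ps^{(r)}$ to the colored Gessel--Reutenauer expansion \eqref{eq: coloredgesselreutenauer}. Since $\ps^{(r)}$ is a ring homomorphism, this turns the right-hand side into
\[
\ps^{(r)}(F(\dD_n^r)) \ = \ \sum_{k=0}^n \, (-1)^k \, \ps^{(r)}\bigl(e_k(\bx^{(0)})\bigr) \, \ps^{(r)}\bigl(h_1(\bX^{(r)})\bigr)^{n-k},
\]
so the whole computation reduces to evaluating the two specialized building blocks.

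Next I would compute each factor. The specialization $\ps^{(r)}$ sets $x_i^{(0)} = q^{i-1}$ for all $i \ge 1$, so on the alphabet $\bx^{(0)}$ it is exactly Gessel and Reutenauer's stable principal specialization $\ps$. Letting $m \to \infty$ in \eqref{eq: psme(x)}, where the factor $\binom{m}{k}_q \to 1/(q)_k$, gives
\[
\ps^{(r)}\bigl(e_k(\bx^{(0)})\bigr) \ = \ \ps\bigl(e_k(\bx)\bigr) \ = \ \frac{q^{\binom{k}{2}}}{(q)_k}.
\]
For the second factor, $\ps^{(r)}$ sends every $x_i^{(j)}$ to $q^{i-1}$, and summing over the $r$ colors and all $i \ge 1$ yields
\[
\ps^{(r)}\bigl(h_1(\bX^{(r)})\bigr) \ = \ r \sum_{i \ge 1} q^{i-1} \ = \ \frac{r}{1-q}.
\]

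Combining the two factors and invoking \cref{eq: IcolormajF} for $\cC = \dD_n^r$, which reads $\ps^{(r)}(F(\dD_n^r)) = \bigl(\sum_{w \in \dD_n^r} q^{\maj(w)}\bigr)/(q)_n$, I obtain
\[
\sum_{w \in \dD_n^r} q^{\maj(w)} \ = \ (q)_n \sum_{k=0}^n \, (-1)^k \, \frac{q^{\binom{k}{2}}}{(q)_k} \, \frac{r^{n-k}}{(1-q)^{n-k}}.
\]
The final step is the simplification $(q)_n/\bigl((q)_k (1-q)^{n-k}\bigr) = [n]_q!/[k]_q!$, which follows at once from $(q)_j = (1-q)^j [j]_q!$ and rearranges the right-hand side into $r^n [n]_q! \sum_{k=0}^n (-1)^k q^{\binom{k}{2}}/(r^k [k]_q!)$, as claimed. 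Since the structure of the argument is dictated by the preceding proof, I do not expect a serious obstacle; the only points requiring a little care are the passage to the stable specialization of $e_k$ (taking the limit of the $q$-binomial coefficient correctly) and the cancellation in the last display.
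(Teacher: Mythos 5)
Your proposal is correct and follows essentially the same route as the paper: the paper likewise applies $\ps^{(r)}$ to \cref{eq: coloredgesselreutenauer}, obtaining $\ps^{(r)}(F(\dD_n^r)) = \sum_{k=0}^n (-1)^k \frac{q^{\binom{k}{2}}}{(q)_k} \bigl(\frac{r}{(q)_1}\bigr)^{n-k}$, and substitutes into \cref{eq: IcolormajF} for $\cC = \dD_n^r$. Your explicit evaluation of the two specialized factors and the final simplification via $(q)_j = (1-q)^j [j]_q!$ are exactly the steps the paper leaves implicit, and they are carried out correctly.
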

\begin{proof}
Specializing \cref{eq: coloredgesselreutenauer} as in \cref{thm: Icolor} yields
\[
\ps^{(r)} (F(\dD_n^r)) \ = \ \sum_{k = 0}^n \, (-1)^k  \frac{q^{\binom{k}{2}}}{(q)_k} \left(\frac{r}{(q)_1}\right)^{n -k}.
\]
The proof follows by substituting in \cref{eq: IcolormajF} for $\cC = \dD_n^r$.
\end{proof}

	The following Euler--Mahonian identity on signed derangements for the pair $(\des, \fmaj_{k, \ell})$ can be obtained by taking the $\theta$ specialization on \cref{eq: coloredgesselreutenauer} for $r=2$ and substituting in \cref{cor: applicationIVkl} for $\bB = \dD_n^2$
\begin{equation}
\label{eq: desklmajoronsignedderangements}
\sum_{m \ge 0} \, \sum_{i = 0}^n \, (-1)^i q^{k \binom{i}{2}} \binom{m+1}{i}_{q^k} ([m+1]_{q^k} + q^\ell [m]_{q^k})^{n - i} \, x^m \ = \ \frac{\sum_{w \in \dD_n^B} \, x^{\des(w)} q^{\fmaj_{k, \ell}(w)}}{(x; q^k)_{n+1}}.
\end{equation}
\Cref{eq: desklmajoronsignedderangements} coincides with \cref{eq: Euler--Mahoniancoloredderangements}  for $r=2$ and $(k, \ell) = (2, 1)$ and refines Chow's formula \cite[Theorem~5]{Cho06}. One can also compute the $\fmaj_{k, \ell}$-distribution on signed derangements
\begin{equation}
\label{eq: klfmajonsignedderangements}
\sum_{w \in \dD_n^B} \, q^{\fmaj_{k, \ell}(w)} \ = \
(1 + q^\ell)^{n} [n]_{q^{k}}!  \, \sum_{i = 0}^{n} \, (-1)^{i} \, \frac{q^{k\binom{i}{2}}}{(1 + q^\ell)^i [i]_{q^k}!}.
\end{equation}
\Cref{eq: klfmajonsignedderangements} reduces to Chow's formula \cite[Theorem~5]{Cho06}.

%
%%
%%%
%%%%
%%%%%----------------------INVOLUTIONS
%%%%
%%%
%%
%
\subsection{Involutions and absolute involutions}
\label{subsec: involution}

Permutations in $\fS_n$ which consist only of one-cycles and two-cycles, when written in cycle notation, are called \emph{involutions}. Let $\iI_n$ be the set of all involutions in $\fS_n$. For a positive integer $n$, let 
\[
I_n (x, q, p) \ := \ \sum_{w \in \iI_n} \, x^{\des(w)} q^{\maj(w)} p^{\fix(w)},
\]
where $\fix(w)$ is the number of fixed points of $w$ and set  $I_n (x, q) := I_n (x, q, 1)$. This polynomial was considered by D\'esarm\'enien and Foata in \cite[Section~6]{DF85} and later by Gessel and Reutenauer in \cite[Section~7]{GR93}, where they computed a generating function for $I_n (x, q, p)$. In particular, D\'esarm\'enien and Foata proved \cite[Equation~(6.2)]{DF85} (where $(q; q)_n$ is to be replaced by $(t; q)_{n +1}$)
\begin{equation}
\label{eq: DesarmenienFoata}
\sum_{n \ge 0} \, \frac{I_n (x, q, p)}{(x; q)_{n +1}} \, z^n \ = \ 
\sum_{m \ge 0} \, (pz; q)_{m+1}^{-1} \prod_{0 \le i < j \leq m} \, (1 - z^2 q^{i + j})^{-1} \, x^m,
\end{equation}
where $I_0 (x, p, q) := 1$. 

One can prove \cref{eq: DesarmenienFoata} by taking the principal specialization of order $m$ of the quasisymmetric generating function for involutions according to fixed points \cite[Equation~(7.1)]{GR93} 
\begin{equation}
\label{eq: quasisymmetric generating function for involutions}
\sum_{n \ge 0} \sum_{w \in \iI_n} \, F_{n, \Des(w)}(\bx) \, p^{\fix(w)} z^{n} \ = \ \prod_{i \ge 1}(1 - pzx_{i})^{-1} \, \prod_{1 \le i < j}(1 - z^{2}x_{i}x_{j})^{-1}
\end{equation}
This is essentially the approach of Gessel and Reutenauer in the proof of \cite[Equation~(7.2)]{GR93}. The connecting link between the D\'esarm\'enien--Foata and Gessel--Reutenauer approaches is \cref{eq: schurtofun}. To be more specific, one has \cite[Corollary~7.13.8~and~Exercise~7.28]{StaEC2}
\begin{equation}
\label{eq: exercise7.28}
\sum_{\lambda} \, s_{\lambda}(\bx) \, p^{\c(\lambda)} z^{|\lambda|} \ = \
\prod_{i \ge 1}(1 - pzx_{i})^{-1} \, \prod_{1 \le i < j}(1 - z^2x_{i}x_{j})^{-1},
\end{equation}
where the sum runs through all partitions $\lambda$, $\c(\lambda)$ is the number of columns of $\lambda$ of odd length and $|\lambda|$ is the sum of all parts of $\lambda$ and  therefore \cref{eq: quasisymmetric generating function for involutions} follows from \cref{eq: exercise7.28}, together with \cref{eq: schurtofun} and the fact that 
%\cite[Corollary~7.13.9]{StaEC2} 
the Robinson--Schensted correspondence restricts to a $\des$-preserving bijection between the set of involutions of $\fS_n$ and the set of all standard Young tableaux of size $n$.

An Euler--Mahonian identity on involutions involving the "hook-content formula" for Schur functions can be derived in the following way. Recall from \cite[Example~1~of~Section~3]{MacSFHP} the following notation
\[
\binom{n}{\lambda}_q := \prod_{u \in \lambda} \, \frac{1 - q^{n - \c(u)}}{1 - q^{\h(u)}},
\]
slightly altered to match our notation, where for a cell $u \in \lambda$, $\c(u)$ and $\h(u)$  is the \emph{content} and the \emph{hook length} of $u$, respectively. We refer to \cite[Section~7.21]{StaEC2} for more details on these concepts. 
%The above notation is justified, as the author in \cite[Example~1~of~Section~3]{MacSFHP} mentions, because letting $\lambda = (k)$, the partition with one part of length $k$, yields $\binom{n}{(k)}_q = \binom{n}{k}_q$. 
Then, Macdonald's interpretation of Stanley's "hook-content formula" \cite[Theorem~7.21.2]{StaEC2} becomes
\begin{equation}
\label{eq: macdonald}
\ps_m ( s_{\lambda} (\bx) ) \ = \ q^{\b(\lambda)} \binom{m}{\lambda'}_q,
\end{equation}
where $\b (\lambda) := \sum_{i \ge 1} (i -1) \lambda_i$, for a partition $\lambda = (\lambda_1, \lambda_2, \dots)$ and $\lambda'$ denotes the conjugate partition to $\lambda$. Taking the principal specialization of order $m$ on 
\[
F ( \iI_n; \bx) \ = \ \sum_{ \lambda \vdash n} \, s_{\lambda} (\bx),
\]
applying Formula \eqref{eq: psmF(A;x)} for $\aA = \iI_n$ and using \cref{eq: macdonald} yields the following Euler--Mahonian identity on $\iI_n$
\begin{equation}
\label{eq: Euler--Mahonian identity on involutions}
\sum_{m \ge 0} \, \sum_{\lambda \vdash n} \, q^{\b(\lambda)} \binom{m+1}{\lambda'}_q \, x^m \ = \ \frac{I_n (x, q)}{(x; q)_{n +1}}.
\end{equation}

	We discuss colored analogues of Equations \eqref{eq: DesarmenienFoata} and \eqref{eq: quasisymmetric generating function for involutions} and \eqref{eq: Euler--Mahonian identity on involutions}. We deal with two types of involutions in colored permutation groups, the colored involutions and the absolute involutions. A \emph{colored~(resp.~absolute)~involution} is an element $w \in \fS_{n, r}$, such that $w^{-1} = w$ (resp. $\ol{w}^{-1} = w$). Let $\iI_n^r$ (resp. $\iI_n^{\abs}$) be the set of all colored (resp. absolute) involutions in $\fS_{n, r}$. Absolute involutions do not coincide with colored involutions for $r \ge 3$. For example, the colored permutation $3^1 2^0 1^3 4^2 6^3 5^1 \in \fS_{6, 4}$ is an involution, but not an absolute involution and on the other hand the colored permutation $3^1 2^0 1^1 4^2 6^3 5^3 \in \fS_{6, 4}$ is an absolute involution, but not an involution. 
	
	 Chow and Mansour \cite[Section~4]{CM10} studied colored involutions. In a similar fashion, if $w$ is an absolute involution with color vector $(c_1, c_2, \dots, c_n)$, then we see that
\begin{itemize}
\item $w \in \iI_n$ and
\item if $w(i) = j$, then $c_{w(i)} = c_j$ computed modulo $r$,
\end{itemize}
for some $i, j \in [n]$. Modifying the arguments of Chow and Mansour \cite[Proposition~7]{CM10} yields the following formulas
\begin{align*}
|\iI_n^{\abs}| \ &= \ r^n n! \sum_{k = 0}^{\lfloor n/2 \rfloor} \, \frac{(1 / 2r)^k}{k! (n - 2k)!} \\
\sum_{n \ge 0} \, |\iI_n^{\abs}| \, \frac{x^n}{n!} \ &= \ e^{r(x^2 / 2 + x)},
\end{align*}
where $|\iI_0^{\abs}| := 1$ and the following recurrence formula for the number of absolute involutions in $\fS_{n, r}$, 
\[
|\iI_{n+1}^{\abs}| \ = \ r (|\iI_n^{\abs}| + n |\iI_{n-1}^{\abs}|),
\]
for every positive integer $n \ge 1$, with initial condition $|\iI_1^{\abs}| = r$. 

	A polynomial $f(x)$ with real coefficients is called $\gamma$-positive if 
\[
	f(x) \ = \ \sum_{i=0}^{\lfloor n/2 \rfloor} 
\gamma_i x^i (1+x)^{n-2i},
\]
 for some $n \in \NN$ and nonnegative reals $\gamma_0, \gamma_1,\dots,\gamma_{\lfloor n/2 \rfloor}$. Chow and Mansour implicitly proved \cite[Proposition~8]{CM10} that the generating polynomial of the $\exc$-statistic on colored involutions is $\gamma$-positive for every even color $r$, where $\exc(w)$ is the number of excedances of $w$, that is indices $i \in [n]$, such that  $w(i) > i$, or $w(i) = i$ and $c_i > 0$ for any colored permutation $w$ with color vector $(c_1, c_2, \dots, c_n)$.	Recall from \cite[Theorem~15]{Stei94} that $\exc$ is Eulerian on colored permutations. Although it is not related to our study of Euler--Mahonian distributions, we record it here because of its own importance. For every even color $r$, \cite[Proposition~8]{CM10} states that
\begin{equation}
\label{eq: a lone gamma-positivity result}
\sum_{w \in \iI_n^r} \, x^{\exc(w)} \ = \ \sum_{i = 0}^{\lfloor n/2 \rfloor} \, r^i \gamma_{n, i} \, x^i (1 + x)^{n - 2i},
\end{equation}
where $\gamma_{n, i}$ is the number of $w \in \iI_n$ having $i$ number of  two-cycles. Gamma-positivity is a property that implies symmetry and unimodality and appears often in combinatorics. For more information we refer the reader to Athanasiadis' comprehensive survey \cite{Ath18}.
		
	In fact, one can further argue as in \cite[Proposition~8]{CM10} and prove the following
\[
\sum_{w \in \iI_n^{\abs}} \, x^{\exc (w)} \ = \ 
\sum_{i = 0}^{\lfloor n/2 \rfloor} \, r^i \gamma_{n, i} \, x^i (1 + (r-1)x)^{n -2i},
\]
where $\gamma_{n, i}$ as in \cref{eq: a lone gamma-positivity result}. The above mentioned formulas coincide with the corresponding formulas of Chow and Mansour \cite{CM10} for $r \le 2$.

	Absolute involutions appeared in Adin, Postnikov and Roichman's study \cite{APR10} of Gelfand models for colored permutation groups $\fS_{n, r}$. They are suitable for providing a colored analogue of D\'esarm\'enien and Foata's Formula \eqref{eq: DesarmenienFoata}. In particular, the colored Robinson--Schensted correspondence restricts to a $\des$-preserving bijection between $\iI_n^{\abs}$ and $\SYT_{n,r}$, the set of all standard Young $r$-partite tableaux of size $n$. In addition, from its description (see, for example, \cite[Section~5]{APR10}), the number of fixed points of color $j$ of an absolute involution in $\fS_{n, r}$ is equal to the number of odd columns of the $j$th part of the $P$-tableau, which corresponds to $w$ via the colored Robinson--Schensted correspondence. For a positive integer $n$, let
\[
F (\iI_n^{\abs}; \bX^{(r)}, p_0, p_1 \dots, p_{r-1}) \ := \ \sum_{w \in  \iI_n^{\abs}} \, F_w ( \bX^{(r)}) \, p_0^{\fix^0(w)} p_1^{\fix^1(w)} \cdots p_{r-1}^{\fix^{r-1}(w)},
\]
be the quasisymmetric generating function for absolute involutions according to fixed points of various colors, where $\fix^j (w)$ is the number of fixed points of $w \in \fS_{n, r}$ of color $j$. \Cref{eq: quasisymmetric generating function for absolute involutions} reduces to \cref{eq: quasisymmetric generating function for involutions} for $r=1$ and $p_0 = p$.
\begin{theorem}
\label{thm: absolute involutions}
We have 
\begin{equation}
\label{eq: quasisymmetric generating function for absolute involutions}
\sum_{n \ge 0} \, F (\iI_n^{\abs}; \bX^{(r)}, p_0, p_1 \dots, p_{r-1}) \, z^n \ = \
\prod_{c = 0}^{r-1} \, \prod_{i \ge 1} \, (1 - zp_c x_i^{(c)})^{-1} \, \prod_{1 \le i < j} \, (1 - z^2 x_i^{(c)} x_j^{(c)})^{-1}. 
\end{equation}
In particular, for a positive integer $n$ we have
\begin{equation}
\label{eq: F(Inr)}
F (\iI_n^{\abs}; \bX^{(r)}, p_0, p_1 \dots, p_{r-1}) \ = \ \sum_{\bl = (\lambda^{(0)}, \dots, \lambda^{(r-1)}) \vdash n} \, \prod_{j = 0}^{r-1} \, s_{\lambda^{(j)}}(\bx^{(j)}) \, p_j^{\c(\lambda^{(j)})}.
\end{equation}
\end{theorem}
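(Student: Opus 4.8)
The plan is to transport the sum defining $F (\iI_n^{\abs}; \bX^{(r)}, p_0, \dots, p_{r-1})$ from absolute involutions to standard Young $r$-partite tableaux via the colored Robinson--Schensted correspondence, establish \cref{eq: F(Inr)} first, and then multiply by $z^n$ and sum over $n$ to deduce the generating function identity \cref{eq: quasisymmetric generating function for absolute involutions}.

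First I would invoke the two properties of the colored Robinson--Schensted correspondence recorded before the statement: restricted to $\iI_n^{\abs}$ it is a $\des$-preserving bijection $w \mapsto \bQ$ onto $\SYT_{n, r}$, so that the pair associated to an absolute involution is $(\bQ, \bQ)$, and the number $\fix^j(w)$ of fixed points of $w$ of color $j$ equals the number of odd-length columns of the part $Q^{(j)}$. Since $\Des_{<_c}(w) = \Des(\bQ)$ under this correspondence, the fundamental colored quasisymmetric functions agree, that is $F_w = F_{\bQ}$. Writing $\bl = (\lambda^{(0)}, \dots, \lambda^{(r-1)})$ for the shape of $\bQ$, the fixed-point weight of $w$ becomes $\prod_{j} p_j^{\c(\lambda^{(j)})}$, where $\c(\lambda^{(j)})$ counts the odd columns of $\lambda^{(j)}$. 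Substituting this into the definition of $F(\iI_n^{\abs}; \dots)$ and grouping the tableaux by shape gives
\[
F (\iI_n^{\abs}; \bX^{(r)}, p_0, \dots, p_{r-1}) = \sum_{\bl \vdash n} \Bigl( \sum_{\bQ \in \SYT(\bl)} F_{\bQ}(\bX^{(r)}) \Bigr) \prod_{j = 0}^{r-1} p_j^{\c(\lambda^{(j)})}.
\]
Applying the colored Schur expansion \cref{eq: schurtocoloredfun} to the inner sum replaces it by $s_\bl(\bX^{(r)}) = \prod_j s_{\lambda^{(j)}}(\bx^{(j)})$, which is exactly \cref{eq: F(Inr)}.

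For the generating function, I would multiply \cref{eq: F(Inr)} by $z^n$ and sum over $n \ge 0$. Since $\bl \vdash n$ forces $\sum_j |\lambda^{(j)}| = n$, the factor $z^n$ splits as $\prod_j z^{|\lambda^{(j)}|}$, and once $n$ ranges freely the constraint linking the $r$ partitions disappears, so the sum factors as a product over the colors $c = 0, \dots, r-1$ of independent sums $\sum_{\lambda} s_\lambda(\bx^{(c)}) \, p_c^{\c(\lambda)} z^{|\lambda|}$. Each such factor is evaluated by the classical identity \cref{eq: exercise7.28} with $p = p_c$ and variable set $\bx^{(c)}$, producing $\prod_{i \ge 1}(1 - z p_c x_i^{(c)})^{-1} \prod_{1 \le i < j}(1 - z^2 x_i^{(c)} x_j^{(c)})^{-1}$; taking the product over $c$ yields \cref{eq: quasisymmetric generating function for absolute involutions}.

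I expect no serious analytic obstacle, since both ingredients—the colored Schur expansion \cref{eq: schurtocoloredfun} and the classical generating function \cref{eq: exercise7.28}—are already available. The only point requiring genuine care is the bookkeeping of the colored Robinson--Schensted correspondence on absolute involutions, namely that the two insertion tableaux coincide ($\bP = \bQ$) and that color-$j$ fixed points correspond precisely to odd columns of $Q^{(j)}$; this is exactly the content of the Adin--Postnikov--Roichman description of the Gelfand model \cite{APR10} and was stated before the theorem. Once that dictionary is in place, the argument is a direct translation of the defining sum into tableaux followed by the factorization over colors, with no specialization needed.
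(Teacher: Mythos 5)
Your proposal is correct and takes essentially the same approach as the paper: transport the defining sum through the colored Robinson--Schensted correspondence on absolute involutions (using that $\bP = \bQ$ and that color-$j$ fixed points match odd columns of $Q^{(j)}$), apply the colored Schur expansion \eqref{eq: schurtocoloredfun}, and evaluate each color factor via \eqref{eq: exercise7.28}. The only difference is the immaterial ordering: you establish \eqref{eq: F(Inr)} first and then sum over $n$, while the paper works with the generating function and obtains \eqref{eq: F(Inr)} by extracting the coefficient of $z^n$.
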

\begin{proof}
The discussion before the statement of the theorem implies that
\[
\sum_{n \ge 0} \, F (\iI_n^{\abs}; \bX^{(r)}, p_0, p_1 \dots, p_{r-1}) \, z^n \ = \
\sum_{n \ge 0} \sum_{\bl = (\lambda^{(0)}, \dots, \lambda^{(r-1)}) \vdash n}
\sum_{\bQ \in \SYT(\bl)} \, F_{\bQ}(\bX^{(r)}) \prod_{j = 0}^{r-1} p_j^{\c(\lambda^{(j)})} z^n.
\]
Thus, applying the expansion \eqref{eq: schurtocoloredfun} yields
\begin{equation}
\label{eq: help^r}
\sum_{n \ge 0} \, F (\iI_n^{\abs}; \bX^{(r)}, p_0, p_1 \dots, p_{r-1}) \, z^n \ = \
 \sum_{\bl} \, \prod_{j = 0}^{r-1} \, s_{\lambda^{(j)}}(\bx^{(j)}) \, p_j^{\c(\lambda^{(j)})} z^{|\bl|},
\end{equation}
where the sum runs through all $r$-partite partitions $\bl =  (\lambda^{(0)}, \dots, \lambda^{(r-1)})$ and $|\bl| := |\lambda^{(0)}| + \cdots + |\lambda^{(r-1)}|$. Now, Formula \eqref{eq: F(Inr)} follows by extracting the coefficient of $z^n$ in \cref{eq: help^r} and Formula \eqref{eq: quasisymmetric generating function for absolute involutions} follows by expanding the right-hand side of \cref{eq: help^r} according to \cref{eq: exercise7.28} for every color.
\end{proof}

	Specializing \cref{eq: quasisymmetric generating function for absolute involutions} as in Theorens \ref{thm: IIcolor} and \ref{thm: IIIcolor} provides  colored analogues of D\'esarm\'enien and Foata's Formula \eqref{eq: DesarmenienFoata}. For a positive integer $n$, let
\begin{align*}
I_n (x, q, p_0, \dots, p_{r-1}) \ &:= \ 
\sum_{w \in \iI_n^{\abs}} \, x^{\des(w)} q^{\fmaj(w)} p_0^{\fix^0(w)} \cdots p_{r-1}^{\fix^{r-1}(w)} \\
I_n^{\flag} (x, q, p_0, \dots, p_{r-1}) \ &:= \ 
\sum_{w \in \iI_n^{\abs}} \, x^{\fdes(w)} q^{\fmaj(w)} p_0^{\fix^0(w)} \cdots p_{r-1}^{\fix^{r-1}(w)}
\end{align*}
and set $I_n (x, q) := I_n (x, q, 1, \dots, 1)$. The following corollary reduces to \cref{eq: DesarmenienFoata} for $r = 1, p_0 = p$.
\begin{corollary}
\label{cor: eulermahonian for colored involutions}
We have
\begin{align}
\sum_{n \ge 0} \, \frac{I_n (x, q, p_0, \dots, p_{r-1})}{(x; q^r)_{n +1}} \, z^n \ &= \ 
\sum_{m \ge 0} \, (p_0 z; q^r)_{m +1}^{-1} \prod_{0 \le i < j \le m} (1 - z^2 q^{r (i + j)})^{-1} \notag \\
&\qquad 
\prod_{c = 1}^{r-1} \, (p_cq^cz; q^r)_m^{-1} \, \prod_{0 \le i < j \le m-1} \, 
(1 - z^2 q^{r (i + j ) + 2c})^{-1} \, x^m \label{eq: colored DesarmenienFoata} 
\end{align}
and
\begin{align}
\sum_{n \ge 0} \, \frac{ [x]_r I_n^{\flag}(x, q, p_0, \dots, p_{r-1})}{(x^r; q^r)_{n +1}} \, z^n \ &= \ 
\sum_{m \ge 0} \, (p_0 z; q^r)_{\lfloor \frac{m}{r} \rfloor}^{-1}  \prod_{0 \le i < j \le \lfloor \frac{m}{r} \rfloor} (1 - z^2 q^{r (i + j)})^{-1} \notag \\
&\qquad 
\prod_{c = 1}^{r-1} (p_c z q^c; q^r)_{\lfloor \frac{m-1}{r} \rfloor}^{-1} \prod_{0 \le i < j \le \lfloor \frac{m-1}{r} \rfloor}
(1 - z^2 q^{r (i + j) + 2c})^{-1} \, x^m \label{eq: flagcolored DesarmenienFoata} 
\end{align}
where $I_0 (x, q, p_0, \dots, p_{r-1}) = I_0^{\flag} (x, q, p_0, \dots, p_{r-1}) := 1$.
\end{corollary}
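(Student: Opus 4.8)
The plan is to derive both identities by applying the specializations of \cref{thm: IIcolor} and \cref{thm: IIIcolor} to the generating-function identity \eqref{eq: quasisymmetric generating function for absolute involutions}, precisely as announced before the statement: $\psi_m^{(r)}$ produces \eqref{eq: colored DesarmenienFoata} and $\phi_m^{(r)}$ produces \eqref{eq: flagcolored DesarmenienFoata}. The mechanism is uniform. For a fixed $n$, taking $\cC = \iI_n^{\abs}$ in \eqref{eq: IIcolordesfmajF} and carrying the fixed-point weights through $F(\iI_n^{\abs}; \bX^{(r)}, p_0, \dots, p_{r-1})$ gives
\[
\sum_{m \ge 1} \psi_m^{(r)}\bigl( F(\iI_n^{\abs}; \bX^{(r)}, p_0, \dots, p_{r-1}) \bigr) x^{m-1} \ = \ \frac{I_n(x, q, p_0, \dots, p_{r-1})}{(x; q^r)_{n+1}}.
\]
Multiplying by $z^n$, summing over $n \ge 0$, and interchanging this summation with the specialization turns the left-hand side into that of \eqref{eq: colored DesarmenienFoata}, and the right-hand side into $\sum_{m \ge 1} x^{m-1}\, \psi_m^{(r)}(\Pi)$, where $\Pi$ is the infinite product on the right of \eqref{eq: quasisymmetric generating function for absolute involutions}. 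The interchange is legitimate as a manipulation of formal power series, since the coefficient of each power of $z$ is a quasisymmetric function of bounded degree on which $\psi_m^{(r)}$ acts as a ring homomorphism with only finitely many nonzero variable substitutions.

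It then remains to evaluate $\psi_m^{(r)}(\Pi)$. As $\psi_m^{(r)}$ sends all but finitely many $x_i^{(c)}$ to $0$, every infinite product collapses to a finite one and is summed by the geometric series. The color-$0$ factors yield $\prod_{i=1}^{m}(1 - zp_0 q^{r(i-1)})^{-1} = (p_0 z; q^r)_m^{-1}$ together with $\prod_{0 \le i < j \le m-1}(1 - z^2 q^{r(i+j)})^{-1}$, while for each $1 \le c \le r-1$ the color-$c$ factors yield $(p_c q^c z; q^r)_{m-1}^{-1}$ and $\prod_{0 \le i < j \le m-2}(1 - z^2 q^{r(i+j)+2c})^{-1}$. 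Substituting these into $\sum_{m \ge 1} x^{m-1}\, \psi_m^{(r)}(\Pi)$ and reindexing $m \mapsto m+1$ reproduces \eqref{eq: colored DesarmenienFoata} term by term.

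For \eqref{eq: flagcolored DesarmenienFoata} I would repeat the argument with $\phi_m^{(r)}$ in place of $\psi_m^{(r)}$ and with \eqref{eq: IIIcolorfdesfmajF} in place of \eqref{eq: IIcolordesfmajF}. The factor $[x]_r$ on the left is explained by rewriting the denominator of \eqref{eq: IIIcolorfdesfmajF} via the elementary identity $[x]_r/(x^r; q^r)_{n+1} = 1/\bigl((1-x)\prod_{i=1}^{n}(1 - x^r q^{ri})\bigr)$. Evaluating $\phi_m^{(r)}(\Pi)$ again gives a finite product, but now only the variables $x_i^{(c)}$ with $i \equiv 1 \pmod r$ survive, with values equal to the single powers $q^0, q^1, \dots, q^{m-1}$, the power $q^e$ being assigned to color $e \bmod r$; counting how many land in each color produces the subscripts $\lfloor m/r\rfloor$ and $\lfloor (m-1)/r\rfloor$ after the reindexing.

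The geometric-series evaluations are routine; the main obstacle is the bookkeeping for $\phi_m^{(r)}$. Concretely, the delicate points will be (i) identifying exactly which pairs $(i, c)$ give a nonzero substitution, namely those with $i \equiv 1 \pmod r$ and $i - 1 + c \le m - 1$, and (ii) handling the boundary conventions of $\phi_m^{(r)}$ --- the vanishing $x_m^{(1)} = \cdots = x_m^{(r-1)} = 0$ and the prescribed last substitution $x_{m-c+1}^{(c-1)} = q^{m-1}$ --- which control the precise floor functions in the final subscripts and quadratic ranges. A useful consistency check throughout is that both formulas must collapse to D\'esarm\'enien and Foata's \eqref{eq: DesarmenienFoata} when $r = 1$ and $p_0 = p$, which pins down the index shifts unambiguously.
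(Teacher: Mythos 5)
Your strategy coincides with the paper's proof, which is exactly the one-line recipe you describe: specialize \eqref{eq: quasisymmetric generating function for absolute involutions} as in \cref{thm: IIcolor} and \cref{thm: IIIcolor}, take the generating function on $m$, interchange the summations, and use \eqref{eq: IIcolordesfmajF} and \eqref{eq: IIIcolorfdesfmajF} for $\cC = \iI_n^{\abs}$. Your execution of the first identity is correct and complete: carrying the weights $p_j^{\fix^j(w)}$ through the specialization is harmless since they are scalars attached to each $w$, your color-by-color evaluation of $\psi_m^{(r)}$ on the infinite product is right, and the reindexing $m \mapsto m+1$ reproduces \eqref{eq: colored DesarmenienFoata} term by term. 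Your observation that $[x]_r/(x^r;q^r)_{n+1} = 1/\bigl((1-x)\prod_{i=1}^{n}(1-x^rq^{ri})\bigr)$ correctly converts the denominator of \eqref{eq: IIIcolorfdesfmajF} into the left-hand side of \eqref{eq: flagcolored DesarmenienFoata}.

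The one genuine problem is in the deferred bookkeeping for $\phi_m^{(r)}$, where you assert that the count ``produces the subscripts $\lfloor m/r\rfloor$ and $\lfloor (m-1)/r\rfloor$ after the reindexing.'' It does not. By your own criterion (i), the surviving color-$c$ variables of $\phi_{m+1}^{(r)}$ are $x_{1+kr}^{(c)} = q^{kr+c}$ with $kr + c \le m$, i.e.\ $0 \le k \le \lfloor (m-c)/r\rfloor$. Hence the color-$0$ factors yield $(p_0 z; q^r)_{\lfloor m/r\rfloor + 1}^{-1}$ with quadratic product over $0 \le i < j \le \lfloor m/r\rfloor$, and the color-$c$ factors yield $(p_c q^c z; q^r)_{\lfloor (m-c)/r\rfloor + 1}^{-1}$ with quadratic product over $0 \le i < j \le \lfloor (m-c)/r\rfloor$ --- note both the extra $+1$ and the genuine dependence on $c$ rather than on $m-1$ alone. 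Your own proposed $r=1$ consistency check exposes this: the corrected subscripts reduce to D\'esarm\'enien--Foata's $(pz;q)_{m+1}^{-1}$ in \eqref{eq: DesarmenienFoata}, whereas the endpoints you claim (which are also the ones printed in \eqref{eq: flagcolored DesarmenienFoata}) give $(pz;q)_m^{-1}$, and are moreover internally inconsistent, since the same $\lfloor m/r\rfloor + 1$ substituted values must feed both the linear and the quadratic factors of each color. So the method is sound and identical to the paper's, but carried to completion it proves \eqref{eq: flagcolored DesarmenienFoata} with subscripts $\lfloor m/r\rfloor + 1$ and $\lfloor (m-c)/r\rfloor + 1$; stating agreement with the printed floors without performing the count is the gap, and running the $r=1$ check you suggested would have caught it.
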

\begin{proof}
The proof follows by specializing Formula \eqref{eq: quasisymmetric generating function for absolute involutions} as in Theorems \ref{thm: IIcolor} and \ref{thm: IIIcolor} and taking the generating function on $m$, changing the order of summation on the left-hand side and using Equations \eqref{eq: IIcolordesfmajF} and \eqref{eq: IIIcolorfdesfmajF} for $\cC = \iI_n^{\abs}$, respectively.
\end{proof}

	In the case $r=2$, \cref{eq: colored DesarmenienFoata} for $q = p_0 = p_1 = 1$ was proved in \cite[Theorem~1.1]{Mou19} and the first instance of this reasoning appeared in the work of Athanasiadis \cite[Proof~of~Proposition~2.22]{Ath18}. Next, we compute Euler--Mahonian identities on absolute involutions for the pairs $(\des, \fmaj)$ and $(\des, \maj)$, which reduce to \cref{eq: Euler--Mahonian identity on involutions} for $r =1$.
\begin{theorem}
\label{thm: euler--mahonian identities on absolute involutions}
For a positive integer $n$, we have 
\begin{align}
\sum_{m \ge 0} \sum q^{r\b (\bl) + \sum_{j=0}^{r-1} j|\lambda^{(j)}|} \binom{m+1}{\lambda^{(0)'}}_{q^r}\binom{m}{\lambda^{(1)'}}_{q^r}\cdots\binom{m}{\lambda^{(r-1)'}}_{q^r} x^m \ &= \ \frac{I_n^{\abs} (x, q)}{(x; q^r)_{n+1}}  \label{eq: desfmaj for absolute involutions} \\
\sum_{m \ge 0} \sum q^{\b (\bl)} \binom{m+1}{\lambda^{(0)'}}_q \binom{m}{\lambda^{(1)'}}_q \cdots \binom{m}{\lambda^{(r-1)'}}_q x^m \ &= \ \frac{\sum_{w \in \iI_n^{\abs}} x^{\des(w)} q^{\maj(w)}}{(x; q)_{n+1}},
\label{eq: desmaj for absolute involutions}
\end{align}
where the sums run through all $r$-partite partitions $\bl = (\lambda^{(0)}, \dots, \lambda^{(r-1)})$ of $n$ and $\b(\bl) := \b (\lambda^{(0)} + \cdots + \lambda^{(r-1)})$.
\end{theorem}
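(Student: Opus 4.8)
The plan is to specialize the Schur-positive expansion of the quasisymmetric generating function for absolute involutions provided by \cref{eq: F(Inr)}, in exactly the manner that \cref{eq: Euler--Mahonian identity on involutions} was derived from $F(\iI_n; \bx) = \sum_{\lambda \vdash n} s_\lambda(\bx)$. Setting $p_0 = \cdots = p_{r-1} = 1$ in \cref{eq: F(Inr)} gives
\[
F(\iI_n^{\abs}; \bX^{(r)}) \ = \ \sum_{\bl = (\lambda^{(0)}, \dots, \lambda^{(r-1)}) \vdash n} \, \prod_{j=0}^{r-1} s_{\lambda^{(j)}}(\bx^{(j)}),
\]
so it suffices to compute the image of each factor $s_{\lambda^{(j)}}(\bx^{(j)})$ under the relevant specialization and substitute into the corollaries of \cref{sec: special}.

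For \cref{eq: desmaj for absolute involutions} I would apply $\ps_m^{(r)}$. Since this specialization restricts to the principal specialization of order $m$ on $\bx^{(0)}$ and of order $m-1$ on each $\bx^{(j)}$ with $j \ge 1$, Macdonald's formula \cref{eq: macdonald} yields $\ps_m^{(r)}(s_{\lambda^{(0)}}(\bx^{(0)})) = q^{\b(\lambda^{(0)})}\binom{m}{\lambda^{(0)'}}_q$ and $\ps_m^{(r)}(s_{\lambda^{(j)}}(\bx^{(j)})) = q^{\b(\lambda^{(j)})}\binom{m-1}{\lambda^{(j)'}}_q$ for $j \ge 1$. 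Taking the product over $j$ and summing over $\bl$ computes $\ps_m^{(r)}(F(\iI_n^{\abs}))$; substituting into \cref{eq: IcolordesmajF} for $\cC = \iI_n^{\abs}$ and re-indexing $m \mapsto m+1$ (which sends $\binom{m}{\lambda^{(0)'}}_q$ to $\binom{m+1}{\lambda^{(0)'}}_q$ and each order-$(m-1)$ binomial to $\binom{m}{\lambda^{(j)'}}_q$) produces \cref{eq: desmaj for absolute involutions} with $\b(\bl) = \sum_{j} \b(\lambda^{(j)})$.

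For \cref{eq: desfmaj for absolute involutions} I would apply $\psi_m^{(r)}$ instead. The one additional point is that on the $j$-th alphabet the substitution $x_i^{(j)} = q^{r(i-1)+j} = q^{j}(q^r)^{i-1}$ carries a color shift $q^{j}$; since $s_{\lambda^{(j)}}$ is homogeneous of degree $|\lambda^{(j)}|$, this shift factors out as $q^{j|\lambda^{(j)}|}$, leaving the principal specialization with base $q^r$. Applying \cref{eq: macdonald} with $q$ replaced by $q^r$ then gives $\psi_m^{(r)}(s_{\lambda^{(0)}}(\bx^{(0)})) = q^{r\b(\lambda^{(0)})}\binom{m}{\lambda^{(0)'}}_{q^r}$ and $\psi_m^{(r)}(s_{\lambda^{(j)}}(\bx^{(j)})) = q^{\,j|\lambda^{(j)}| + r\b(\lambda^{(j)})}\binom{m-1}{\lambda^{(j)'}}_{q^r}$ for $j \ge 1$. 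Collecting exponents across the product yields precisely the prefactor $q^{r\b(\bl) + \sum_{j=0}^{r-1} j|\lambda^{(j)}|}$, and substituting into \cref{eq: IIcolordesfmajF} for $\cC = \iI_n^{\abs}$ with the same re-indexing $m \mapsto m+1$ gives \cref{eq: desfmaj for absolute involutions}.

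The bookkeeping of exponents is routine; the only place demanding care is the treatment of $\psi_m^{(r)}$ on the colored alphabets, where one must cleanly separate the color-induced factor $q^{j|\lambda^{(j)}|}$ coming from homogeneity from the $q^r$-principal specialization governed by \cref{eq: macdonald}, and keep track of the order-$m$ versus order-$(m-1)$ distinction between $\bx^{(0)}$ and the remaining alphabets, which is exactly what produces the differing top indices $m+1$ and $m$ in the $q^r$-binomial coefficients.
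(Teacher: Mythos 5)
Your proposal is correct and follows essentially the same route as the paper's own proof: specialize \cref{eq: F(Inr)} at $p_0 = \cdots = p_{r-1} = 1$ under $\ps_m^{(r)}$ and $\psi_m^{(r)}$, apply Macdonald's hook-content formula \eqref{eq: macdonald} color by color, use homogeneity of Schur functions to extract the shift $q^{j|\lambda^{(j)}|}$, and substitute into Corollaries \ref{cor: Icolor} and \ref{cor: IIcolor} for $\cC = \iI_n^{\abs}$. The only difference is expository: you spell out the re-indexing $m \mapsto m+1$ and the order-$m$ versus order-$(m-1)$ bookkeeping that the paper leaves implicit.
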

\begin{proof}
For Equation \eqref{eq: desmaj for absolute involutions}, specialize Equation \eqref{eq: F(Inr)} for $p_0= \cdots = p_{r-1} = 1$ as in \cref{thm: Icolor} and apply \cref{cor: Icolor} for $\cC = \iI_n^{\abs}$ and  \cref{eq: macdonald} for very color. For Equation \eqref{eq: desfmaj for absolute involutions}, we specialize as in \cref{thm: IIcolor} and apply \cref{cor: IIcolor} as before, but taking into account that
\[
s_{\lambda^{(j)}} (q^j, q^{r + j}, \dots, q^{r (m-2) + j}, 0) \ = \ q^{j |\lambda^{(j)}|} s_{\lambda^{(j)}} (1, q^r, \dots, q^{r (m-2)}, 0),
\]
for every color $1 \le j \le r-1$, due to the homogeneousness of Schur functions.
\end{proof}

	We finish  by providing a colored generalization of a formula due to Athanasiadis \cite[Proposition~2.2.]{Ath18} (see also \cite[Corollary~5.7~(b)]{GZ20}), which expresses the generating polynomials of the $\des$-distribution on $\iI_n$ and $\iI_n^2$  in terms of Eulerian polynomials $A_n (x)$ and $A_{n, 2}(x)$, respectively. Let $I_n^{\abs} (x) := I_n^{\abs} (x, 1)$. For $w \in \fS_{n, r}$, let $\c^j (w)$ be the number of colored cycles of $w$ of color $j$, for every $0 \le j \le r-1$. The following corollary reduces to \cite[Proposition~2.22]{Ath18} for $r \le 2$.
\begin{corollary}
\label{cor: colored athanasiadis}
For a positive integer $n$, we have 
\begin{equation}
\label{eq: colored athanasiadis}
I_n^{\abs} (x) \ = \ \frac{1}{r^n n!} \, \sum_{w \in \fS_{n, r}} \, (1 - x)^{n - \c^0 (w \overline{w})} \, A_{\c^0 (w \overline{w}), r} (x).
\end{equation}
\end{corollary}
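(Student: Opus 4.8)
The plan is to combine the characteristic–map expansion of $F(\iI_n^{\abs};\bX^{(r)})$ with the $q=1$ principal specialization and then reorganize the resulting character sum via a twisted Frobenius–Schur identity. First I would set $p_0=\cdots=p_{r-1}=1$ in \eqref{eq: F(Inr)} to obtain $F(\iI_n^{\abs};\bX^{(r)}) = \sum_{\bl\vdash n}s_\bl(\bX^{(r)})$. Since $\ch_r(\chi^\bl)(\bX^{(r)}) = s_\bl(\bX^{(r)})$, the right-hand side equals $\ch_r(\Theta)$ for the Gelfand-model character $\Theta := \sum_{\bl\vdash n}\chi^\bl$, so \eqref{eq: FrobDef} gives
\[
F(\iI_n^{\abs};\bX^{(r)}) \ = \ \frac{1}{r^n n!}\sum_{w\in\fS_{n,r}}\Theta(w)\,p_{\ct(w)}(\bX^{(r)}).
\]

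Next I would apply the specialization $\ps_m^{(r)}$ at $q=1$. A direct computation of the images of the colored power sums shows that, under $x_i^{(0)}\mapsto 1$ (for $1\le i\le m$) and $x_i^{(j)}\mapsto 1$ (for $1\le i\le m-1$, $j\ge1$), one has $p_k^{(0)}\mapsto m+(m-1)(r-1)=r(m-1)+1$ and $p_k^{(j)}\mapsto m-(m-1)=1$ for $j\ne0$, because $\sum_{c=1}^{r-1}\zeta^{jc}$ equals $r-1$ or $-1$ according as $j=0$ or $j\ne0$. Hence $p_{\ct(w)}$ specializes to $(r(m-1)+1)^{\c^0(w)}$, the exponent being the number of parts of the color-$0$ component of $\ct(w)$, that is, the number of color-$0$ cycles of $w$. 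Feeding this into \Cref{cor: Icolor}, namely \eqref{eq: IcolordesmajF} for $\cC=\iI_n^{\abs}$ at $q=1$, yields
\[
\frac{I_n^{\abs}(x)}{(1-x)^{n+1}} \ = \ \sum_{m\ge1}\ps_m^{(r)}\!\left(F(\iI_n^{\abs})\right)\big|_{q=1}x^{m-1} \ = \ \frac{1}{r^n n!}\sum_{w\in\fS_{n,r}}\Theta(w)\sum_{\ell\ge0}(r\ell+1)^{\c^0(w)}x^{\ell}.
\]

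Then I would invoke Steingr\'imsson's generating function \eqref{eq: Steingrimsson}, which in the present notation reads $\sum_{\ell\ge0}(r\ell+1)^s x^\ell = A_{s,r}(x)/(1-x)^{s+1}$, with $s=\c^0(w)$, and clear the denominator $(1-x)^{n+1}$ to reach the intermediate identity
\[
I_n^{\abs}(x) \ = \ \frac{1}{r^n n!}\sum_{w\in\fS_{n,r}}\Theta(w)\,(1-x)^{n-\c^0(w)}A_{\c^0(w),r}(x).
\]

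The last and hardest step is to eliminate the character weight $\Theta(w)$. The map $w\mapsto w\overline{w}$ is compatible with conjugacy (one checks that $w\mapsto g w\overline{g}^{-1}$ sends $w\overline w$ to $g(w\overline w)g^{-1}$, using $\overline{\overline g}=g$), so all the quantities $\c^0(w\overline w)$ depend only on conjugacy classes. The key fact I would use is the twisted Frobenius--Schur identity $\Theta(w)=\sum_{\bl}\chi^\bl(w)=|\{v\in\fS_{n,r}:v\overline v=w\}|$; that is, the values of the Gelfand-model character count the fibers of $v\mapsto v\overline v$, which is precisely the content of the Adin--Postnikov--Roichman model on absolute involutions \cite{APR10} (for $r=1$ it specializes to the classical count of square roots $\sum_\lambda\chi^\lambda(w)=|\{v:v^2=w\}|$ implicit in Athanasiadis's proof). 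Granting it, $\sum_{w}\Theta(w)g(w)=\sum_{v}g(v\overline v)$ for every class function $g$, and applying this with $g(w)=(1-x)^{n-\c^0(w)}A_{\c^0(w),r}(x)$ reindexes the previous display into $\tfrac{1}{r^nn!}\sum_{w}(1-x)^{n-\c^0(w\overline w)}A_{\c^0(w\overline w),r}(x)$, which is exactly \eqref{eq: colored athanasiadis}. I expect the main obstacle to be pinning down the exact form of this twisted indicator identity, specifically that every irreducible $\chi^\bl$ contributes with coefficient $+1$ and that the correct twisting automorphism is $w\mapsto\overline w$; once that is in hand, the remaining steps are routine specialization and bookkeeping.
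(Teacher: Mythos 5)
Your proposal is correct and follows essentially the same route as the paper's proof: expand $F(\iI_n^{\abs};\bX^{(r)})$ with all $p_j=1$ through the characteristic map \eqref{eq: FrobDef}, compute that the colored power sums specialize to $(r(m-1)+1)^{\c^0(w)}$ via $\zeta^j+\cdots+\zeta^{(r-1)j}=-1$ for $j\neq 0$, apply Steingr\'{i}msson's identity \eqref{eq: Steingrimsson}, and eliminate the Gelfand-model character by the Adin--Postnikov--Roichman square-root identity \eqref{eq: adin+postnikov+roichman} from \cite{APR10}, which is exactly the ``twisted Frobenius--Schur'' fact you flagged as the main obstacle. The only cosmetic difference is that you specialize via \eqref{eq: IcolordesmajF} where the paper uses \eqref{eq: IIcolordesfmajF}, but at $q=1$ the specializations $\ps_m^{(r)}$ and $\psi_m^{(r)}$ coincide, so the arguments are identical.
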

\begin{proof}
From \cref{eq: IIcolordesfmajF}, for $\cC = \iI_n^{\abs}$ and $q =1$, we have
\begin{equation}
\label{eq: one}
\frac{I_n^{\abs} (x)}{(1 - x)^{n + 1}} \ = \ \sum_{m \ge 1} \, F (\iI_n^{\abs}; 1^m, 1^{m-1}, \dots, 1^{m-1}) \, x^{m -1},
\end{equation}
where $1^m$ in the above notation means that $x_1^{(0)}= x_2^{(0)} = \cdots = x_m^{(0)} = 1$ and $x_{m + 1}^{(0)} = x_{m + 2}^{(0)} = \cdots = 0$ etc.. Applying Equation \eqref{eq: F(Inr)} for $p_0 = p_1 = \cdots = p_{r-1} = 1$, \cref{eq: one} becomes
\begin{equation}
\label{eq: two}
\frac{I_n^{\abs} (x)}{(1 - x)^{n + 1}} \ = \
\sum_{m \ge 1} \sum \, s_{\lambda^{(0)}} (1^m) s_{\lambda^{(1)}} (1^{m-1}) \cdots s_{\lambda^{(r-1)}}(1^{m-1}) \, x^{m -1},
\end{equation}
where the second sum runs through all $r$-partite partitions $\bl = (\lambda^{(0)}, \dots, \lambda^{(r-1)})$ of  $n$. From the properties of the characteristic map, we know that 
\[
s_{\lambda^{(0)}} (1^m) s_{\lambda^{(1)}} (1^{m-1}) \cdots  s_{\lambda^{(r-1)}} (1^{m-1})= \ch (\chi^{\bl}) (1^m, 1^{m-1}, \dots, 1^{m-1})
\]
 and we can use \cref{eq: FrobDef} to expand it in the colored power sum basis, as follows 
\[
\ch_r (\chi^{\bl}) (1^m, 1^{m-1}, \dots, 1^{m-1}) \ = \ 
\frac{1}{r^n n!} \sum_{w \in \fS_{n, r}} \, \chi^{\bl} (w) p_{\ct (w)} (1^m, 1^{m-1}, \dots, 1^{m-1}),
\]
where $\chi^{\bl}$ is the irreducible $\fS_{n, r}$-character associated with the $r$-partite partition $\bl \vdash n$. But, 
\begin{align*}
p_{\ct (w)} (1^m, 1^{m-1}, \dots, 1^{m-1}) \ &= \ 
\prod_{j = 0}^{r-1} (m + (\zeta^j + \zeta^{2j} + \cdots + \zeta^{(r-1)j})(m-1))^{c^j(w)} \\
&= \ (r (m-1) + 1)^{c^0 (w)} \, \prod_{j = 1}^{r-1} (m - (m -1))^{c^j(w)} \\
&= (r (m-1) + 1)^{c^0 (w)},
\end{align*}
because 
\[
\zeta^j + \zeta^{2j} + \cdots + \zeta^{(r-1)j} \ = \ \frac{1 - \zeta^{jr}}{1 - \zeta^j} - 1 \ = \ -1,
\] 
for every $ 1 \le j \le r-1$. Combining these calculations, substituting in \cref{eq: two} and changing the order of summation yields
\begin{equation}
\label{eq: three}
\frac{I_n^{\abs} (x)}{(1 - x)^{n + 1}} \ = \
\frac{1}{r^n n!} \, \sum_{w \in \fS_{n, r}} \, \left(\sum_{\bl \vdash n} \, \chi^{\bl} (w) \right) \, \left(\sum_{m \ge 0} \, (rm +1)^{c^0 (w)} \, x^{m} \right).
\end{equation}
A special case of a well known result due to Frobenius and Schur (see, for example, \cite[Exercise~7.69~(c)]{StaEC2} and references therein) is that the sum of all irreducible $\fS_n$-characters computed in the conjugacy class corresponding to the cycle type of $w \in \fS_n$ is equal to the number of square roots of $w$ in $\fS_n$. Adin, Postnikov and Roichman \cite[Theorem~3.4]{APR10} extended this result to colored permutation groups by proving the following 
\begin{equation}
\label{eq: adin+postnikov+roichman}
\sum_{\bl \vdash n} \, \chi^{\bl} (w) \ = \ |\{ u \in \fS_{n, r} : u \overline{u} = w\}|,
\end{equation}
for every $w \in \fS_{n, r}$. Thus, the proof follows by substituting \cref{eq: adin+postnikov+roichman} in \cref{eq: three} and using Steingr\'{i}msson's Formula \eqref{eq: Steingrimsson}
\end{proof}
%

%
%%
%%%
%%%%
%%%%%----------------------ROSELLE
%%%%
%%%
%%
%
\subsection{Bimahonian and multivariate distributions}
\label{subsec: bimahonian}

	In what follows, the generating polynomials of permutation statistics are set to equal 1 for $n = 0$. For a permutation statistic $\stat$ and a colored permutation $w$, we write $\istat(w) := \stat(w^{-1})$ and $\ol{\istat}(w) := \stat (\ol{w}^{-1})$. A pair of statistics is called \emph{bimahonian} if it is equidistributed with $(\maj, \imaj)$. A celebrated result, due to Foata and Sch\"{u}tzenberger \cite[Theorem~1]{FS78}, states that the pair  $(\maj, \inv)$ is bimahonian on $\fS_n$. Gessel \cite[Theorem~8.5]{Ges77} computed the following generating function for the bimahonian statistic $(\inv, \maj)$
\begin{equation}
\label{eq: gessel bimahonian}
\sum_{n \ge 0} \, \frac{\sum_{w \in \fS_n} \, q^{\inv(w)} p^{\maj(w)}}{(q)_n(p)_n} \, z^n \ = \ \frac{1}{(z; q, p)_{\infty, \infty}},
\end{equation}
where  
\[
(z; q, p)_{\infty, \infty} \ := \prod_{i \geq 1} \prod_{j \ge 1} \, (1 - zq^{i-1}p^{j-1}). 
\]
\Cref{eq: gessel bimahonian} also holds for the bimahonian pair $(\maj, \imaj)$. This appeared implicitly in Gordon's work \cite{Gor63} and later made explicit by Roselle \cite{Ros74}. Because of that, \cref{eq: gessel bimahonian} for the bimahonian pair $(\maj, \imaj)$ is often called \emph{Roselle~identity}. Garsia and Gessel \cite{GG79} studied bieulerian-bimahonian  distributions, meaning the four-variate distribution $(\des, \ides, \maj, \imaj)$ and proved the following generating function 
\begin{equation}
\label{eq: garsia gessel}
\sum_{n \ge 0} \, \frac{\sum_{w \in \fS_n} \, x^{\des(w)} y^{\ides(w)} q^{\maj(w)} p^{\imaj(w)}}{(x; q)_{n+1} (y, p)_{n +1}} \, z^n \ = \ 
\sum_{m_1 \ge 0} \sum_{m_2 \ge 0} \, \frac{x^{m_1} y^{m_2}}{(z; q, p)_{m_1+1, m_2+1}},
\end{equation}
where
\[
(z; q, p)_{k, l} \ := \prod_{i = 1}^k \prod_{j = 1}^l \, (1 - zq^{i-1}p^{j-1}),
\]
for every positive integers $k, l$.

	Equations \eqref{eq: gessel bimahonian} and \eqref{eq: garsia gessel} can be proved by taking the stable principal specialization and the principal specialization of order $m$ of the following identity \cite[Equation~(7.114)~and~Equation~(7.44)]{StaEC2}
\begin{equation}
\label{eq: cauchy kernel}
\sum_{n \ge 0} \sum_{w \in \fS_n} \, F_{n, \Des(w)} (\bx) F_{n, \Des(w^{-1})} (\by) \, z^n \ = \ 
\prod_{i \ge 1} \prod_{j \ge 1} \, (1 - z x_i y_j )^{-1},
\end{equation}
and using Formulas \eqref{eq: psF(A;x)} and \eqref{eq: psmF(A;x)} for $\aA = \fS_n$, respectively. This is essentially the approach of \cite[Corollary~7.23.9]{StaEC2} (see also \cite{DF85}). This section develops a colored analogue of this approach and provides colored analogues of Equations \eqref{eq: gessel bimahonian} and \eqref{eq: garsia gessel} for bimahonian and bieulerian-bimahonian distributions on colored permutations.

	For every $0 \le j \le r-1$, let $\bx'^{(j)} = (x_1'^{(j)}, x_2'^{(j)}, \dots)$ be another sequence of commuting indeterminates and let $\bX'^{(r)} := (x_i'^{(0)}, x_i'^{(1)}, \dots, x_i'^{(r-1)})_{i \ge 1}$. The following lemma, essentially due to Poirier \cite[Lemma~4]{Poi98}, is a colored analogue of \cref{eq: cauchy kernel}.
\begin{lemma}
\label{lem: colored cauchy kernel}
We have 
\begin{equation}
\label{eq: colored cauchy kernel}
\sum_{n \ge 0} \sum_{w \in \fS_{n, r}} \, F_w (\bX^{(r)}) F_{\overline{w}^{-1}} (\bX'^{(r)}) \, z^n \ = \ 
\prod_{c = 0}^{r-1} \prod_{i \ge 1} \prod_{j \ge 1} \, (1 - z x_i^{(c)} x_j'^{(c)})^{-1}.
\end{equation}
\end{lemma}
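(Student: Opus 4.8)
The plan is to run the colored analogue of the standard proof of the Cauchy kernel identity \eqref{eq: cauchy kernel} given in \cite[Corollary~7.23.9]{StaEC2}, which is also the route taken by Poirier \cite[Lemma~4]{Poi98}. The three classical ingredients---the ordinary Cauchy identity for Schur functions, the expansion \eqref{eq: schurtofun}, and the Robinson--Schensted correspondence together with its descent-preserving property---all have colored counterparts recorded in \cref{subsec: quasi}, and the proof amounts to assembling them in the right order.

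First I would record a colored Cauchy identity. Applying the ordinary Cauchy identity $\prod_{i, j}(1 - x_i y_j)^{-1} = \sum_\lambda s_\lambda(\bx) s_\lambda(\by)$ to each pair of alphabets $(\bx^{(c)}, \bx'^{(c)})$ separately and multiplying over $0 \le c \le r-1$ gives
\[
\prod_{c = 0}^{r-1} \prod_{i \ge 1} \prod_{j \ge 1} (1 - z x_i^{(c)} x_j'^{(c)})^{-1} \ = \ \sum_{\bl} \, z^{|\bl|} \, s_\bl(\bX^{(r)}) \, s_\bl(\bX'^{(r)}),
\]
where the sum runs over all $r$-partite partitions $\bl = (\lambda^{(0)}, \dots, \lambda^{(r-1)})$; the single variable $z$ correctly records the total degree $|\bl| = |\lambda^{(0)}| + \cdots + |\lambda^{(r-1)}|$ because each $s_\bl$ is homogeneous of that degree. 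Grouping the terms on the right according to $|\bl| = n$, it remains to prove that $\sum_{\bl \vdash n} s_\bl(\bX^{(r)}) s_\bl(\bX'^{(r)})$ equals $\sum_{w \in \fS_{n, r}} F_w(\bX^{(r)}) F_{\ol{w}^{-1}}(\bX'^{(r)})$ for every $n$.

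To do this I would expand each Schur factor with the colored analogue \eqref{eq: schurtocoloredfun} of \eqref{eq: schurtofun}, writing $s_\bl(\bX^{(r)}) = \sum_{\bQ \in \SYT(\bl)} F_\bQ(\bX^{(r)})$ and likewise for $s_\bl(\bX'^{(r)})$ with a tableau $\bP$, so that
\[
\sum_{\bl \vdash n} s_\bl(\bX^{(r)}) \, s_\bl(\bX'^{(r)}) \ = \ \sum_{\bl \vdash n} \, \sum_{\bP, \bQ \in \SYT(\bl)} \, F_\bQ(\bX^{(r)}) \, F_\bP(\bX'^{(r)}).
\]
Then I would invoke the colored Robinson--Schensted correspondence of White \cite{Whi83, SW85}, a bijection from $\fS_{n, r}$ onto the pairs $(\bP, \bQ)$ of standard Young $r$-partite tableaux of a common shape $\bl \vdash n$. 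Transporting the double tableau sum through this bijection, and using the identifications $F_w(\bX^{(r)}) = F_\bQ(\bX^{(r)})$ and $F_{\ol{w}^{-1}}(\bX'^{(r)}) = F_\bP(\bX'^{(r)})$, turns the right-hand side into $\sum_{w \in \fS_{n, r}} F_w(\bX^{(r)}) F_{\ol{w}^{-1}}(\bX'^{(r)})$; multiplying by $z^n$ and summing over $n$ then yields \eqref{eq: colored cauchy kernel}.

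The step I expect to be the main obstacle is the last identification: upgrading the descent-set equalities $\Des_{<_c}(w) = \Des(\bQ)$ and $\Des_{<_c}(\ol{w}^{-1}) = \Des(\bP)$ recorded in \cref{subsec: quasi} to the full equalities $F_w = F_\bQ$ and $F_{\ol{w}^{-1}} = F_\bP$ of fundamental colored quasisymmetric functions. Since $F_w$ is determined by the pair consisting of its starred descent set and its color vector, this requires that the correspondence also match colors, assigning to each value $k \in [n]$ the same color in the recording tableaux that it carries in $w$ and in $\ol{w}^{-1}$, so that the monomials $x_{i_1}^{(c_1)} \cdots x_{i_n}^{(c_n)}$ in \eqref{eq: fundamental colored quasi} agree term by term; this is precisely where the color order $<_c$ is essential. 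I would extract this color-compatibility from the description of the colored correspondence in \cite{BRS08, APR10} (it is the colored refinement of the classical fact that $\Des(w) = \Des(Q)$ forces equality of the associated fundamental quasisymmetric functions), and cite Poirier's original argument \cite[Lemma~4]{Poi98} for the identity itself.
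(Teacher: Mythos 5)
Your proof is correct and follows essentially the same route as the paper's: the colored Cauchy identity of Poirier \cite[Lemma~4]{Poi98} (which you rederive, harmlessly, as a product of $r$ ordinary Cauchy identities applied color by color), followed by the expansion \eqref{eq: schurtocoloredfun} and the colored Robinson--Schensted correspondence. Your closing observation---that the descent-set equalities $\Des_{<_c}(w) = \Des(\bQ)$ and $\Des_{<_c}(\ol{w}^{-1}) = \Des(\bP)$ must be supplemented by color-compatibility of the correspondence (entry $k$ lying in the part of the tableau indexed by its color) in order to conclude $F_w = F_{\bQ}$ and $F_{\ol{w}^{-1}} = F_{\bP}$---is exactly the point the paper leaves implicit under the phrase ``the colored Robinson--Schensted correspondence and its properties,'' and you handle it correctly.
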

\begin{proof}
Recall from \cite[Lemma~4]{Poi98} that
\begin{equation}
\label{eq: HELP}
\sum_{n \ge 0} \sum_{\bl \vdash n} s_{\bl} (\bX^{(r)}) s_{\bl} (\bX'^{(r)}) \, z^n \ = \
\prod_{c = 0}^{r-1} \prod_{i \ge 1} \prod_{j \ge 1} \, (1 - z x_i^{(c)} x_j'^{(c)})^{-1},
\end{equation}
The proof follows by expanding the products $s_{\bl} (\bX^{(r)}), s_{\bl} (\bX'^{(r)})$ in the left-hand side of \cref{eq: HELP} using \cref{eq: schurtocoloredfun} and then applying the colored Robinson--Schensted correspondence.
\end{proof}

	The following corollary computes generating functions for the bimahonian distribution $(\maj, \ol{\imaj})$ and for the bieulerian-bimahonian distributions $(\des, \ol{\ides}, \maj, \ol{\imaj})$ and $(\des^*, \ol{\ides^*}, \maj, \ol{\imaj})$ on colored permutations. Generating functions for the four-variate distribution $(\des, \ides, \maj, \imaj)$ on colored permutations have been proved by Biagioli and Zeng \cite[Theorem~7.1]{BZ11}, where the authors use the length order and by Reiner \cite[Corollary~7.3]{Rei93}, where he considers $i \in [n]$ to be a descent of $w \in \fS_{n, r}$, if $\ell (w s_i^{-1}) = \ell(w) - 1$, where $ s_1, s_2, \dots, s_{n-1}$ and $s_n := s_0$, as defined in \cref{subsec: colored}. For $r=2$, \cref{eq: des ides maj imaj} coincides with Biagioli and Zeng's formula.
\begin{corollary}
\label{cor: Imultivariate}
We have
\begin{equation}
\label{eq: maj imaj}
\sum_{n \ge 0} \, \frac{\sum_{w \in \fS_{n, r}} \, q^{\maj(w)} p^{\ol{\imaj}(w)}}{(q)_n (p)_n} \, z^n \ = \
\frac{1}{(z; q, p)_{\infty, \infty}^r},
\end{equation}
and 
\begin{align}
\sum_{n \ge 0} \, \frac{\sum_{w \in \fS_{n, r}} \, x^{\des(w)} y^{\ol{\ides}(w)} q^{\maj(w)} p^{\ol{\imaj}(w)}}{(x; q)_{n+1} (y; p)_{n+1}} \, z^n \ &= \
\sum_{m_1, m_2 \ge 0} \, \frac{x^{m_1} y^{m_2}}{(z; q, p)_{m_1 +1, m_2 + 1} (z; q, p)_{m_1, m_2}^{r-1}}  \label{eq: des ides maj imaj} \\
\sum_{n \ge 0} \frac{\sum_{w \in \fS_{n, r}} \, x^{\des^*(w)} y^{\ol{\ides^*}(w)} q^{\maj(w)} p^{\ol{\imaj}(w)}}{(x; q)_{n+1} (y; p)_{n+1}} \, z^n \ &= \
\sum_{m_1, m_2 \ge 0} \, \frac{x^{m_1} y^{m_2}}{(z; q, p)_{m_1 +1, m_2 + 1}^r}. \label{eq: des* ides* maj imaj}
\end{align}
\end{corollary}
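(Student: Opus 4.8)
The plan is to feed the colored Cauchy kernel of \cref{lem: colored cauchy kernel} through the specializations of \cref{thm: Icolor}, applying one copy in the variable $q$ to the family $\bX^{(r)}$ and an independent copy in the variable $p$ to $\bX'^{(r)}$. Since these two families of indeterminates are disjoint, the composite substitution is a single ring homomorphism under which $F_w(\bX^{(r)})$ and $F_{\ol{w}^{-1}}(\bX'^{(r)})$ specialize independently; this is exactly what makes the left-hand sides factor. The only genuine care is required on the right-hand side, where one must track how each specialization truncates the color-$0$ block differently from the blocks of colors $1,\dots,r-1$.

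For \eqref{eq: maj imaj} I would apply $\ps^{(r)}$ in $q$ to $\bX^{(r)}$ and $\ps^{(r)}$ in $p$ to $\bX'^{(r)}$. By \eqref{eq: Icolormaj} the summand indexed by $w$ on the left becomes $q^{\maj(w)} p^{\maj(\ol{w}^{-1})}/\bigl((q)_n (p)_n\bigr)$, and since $\maj(\ol{w}^{-1}) = \ol{\imaj}(w)$ by definition, summing over $w \in \fS_{n,r}$ and over $n$ produces the claimed left-hand side. On the right each of the $r$ blocks $c = 0,1,\dots,r-1$ is specialized identically, so $\prod_{i,j}(1 - z x_i^{(c)} x_j'^{(c)})^{-1} \mapsto (z;q,p)_{\infty,\infty}^{-1}$ for every $c$, and the product over $c$ yields the $r$-th power.

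For \eqref{eq: des ides maj imaj} and \eqref{eq: des* ides* maj imaj} I would instead use the order-$m$ specializations, applying $\ps_{m_1}^{(r)}$ (resp. $\widetilde{\ps}_{m_1}^{(r)}$) in $q$ to $\bX^{(r)}$ and $\ps_{m_2}^{(r)}$ (resp. $\widetilde{\ps}_{m_2}^{(r)}$) in $p$ to $\bX'^{(r)}$, then multiply by $x^{m_1-1} y^{m_2-1}$ and sum over $m_1, m_2 \ge 1$. On the left the double sum factors into a product of two single generating functions, each evaluated by \eqref{eq: Icolordesmaj} (resp. \eqref{eq: Icolordes*maj}); using $\des(\ol{w}^{-1}) = \ol{\ides}(w)$ and $\maj(\ol{w}^{-1}) = \ol{\imaj}(w)$ (resp. the starred analogues) reproduces the left-hand sides. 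On the right, under $\ps_{m_1}^{(r)} \otimes \ps_{m_2}^{(r)}$ the color-$0$ block runs over $1 \le i \le m_1$, $1 \le j \le m_2$ and contributes $(z;q,p)_{m_1,m_2}^{-1}$, while each of the $r-1$ higher-color blocks runs only over $1 \le i \le m_1 - 1$, $1 \le j \le m_2 - 1$ and contributes $(z;q,p)_{m_1-1,m_2-1}^{-1}$; under $\widetilde{\ps}_{m_1}^{(r)} \otimes \widetilde{\ps}_{m_2}^{(r)}$ all $r$ blocks run up to $m_1, m_2$. Reindexing $m_1 \mapsto m_1 + 1$ and $m_2 \mapsto m_2 + 1$ then gives the asymmetric denominator $(z;q,p)_{m_1+1,m_2+1}(z;q,p)_{m_1,m_2}^{r-1}$ in the first case and the symmetric $r$-th power $(z;q,p)_{m_1+1,m_2+1}^r$ in the second.

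I do not expect a serious obstacle beyond bookkeeping: the one point that must be handled carefully is precisely this asymmetric truncation of the color-$0$ block, which is the source of the exponent split $1 + (r-1)$ in \eqref{eq: des ides maj imaj} as opposed to the uniform exponent $r$ in \eqref{eq: des* ides* maj imaj}. I would also remark once that all interchanges of summation (over $n$, over $w$, and over $m_1,m_2$) are legitimate because every identity here lives in a ring of formal power series, so no convergence question arises.
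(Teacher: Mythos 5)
Your proposal is correct and is essentially the paper's own proof: the paper simply says to combine \cref{cor: Icolor} (for $\cC = \fS_{n,r}$) with \cref{lem: colored cauchy kernel}, which is exactly your plan of applying the specializations of \cref{thm: Icolor} independently in $q$ to $\bX^{(r)}$ and in $p$ to $\bX'^{(r)}$. Your bookkeeping of the asymmetric truncation of the color-$0$ block (yielding the $(z;q,p)_{m_1+1,m_2+1}(z;q,p)_{m_1,m_2}^{r-1}$ denominator versus the uniform $r$-th power) and the reindexing $m_i \mapsto m_i + 1$ correctly fills in the details the paper leaves implicit.
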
 
\begin{proof}
Combine \cref{cor: Icolor} for $\cC = \fS_{n, r}$ and \cref{lem: colored cauchy kernel} and the proof follows.
\end{proof}

	The following corollary computes the generating functions for the bimahonian distribution $(\fmaj, \ol{\ifmaj})$ and bieulerian-bimahonian distributions $(\des, \ol{\ides}, \fmaj, \ol{\ifmaj})$ and $(\des^*, \ol{\ides^*}, \fmaj, \ol{\ifmaj})$ on $r$-colored permutations. Generating functions for the bimahonian distribution $(\fmaj, \ifmaj)$ have been proved by Foata and Han \cite[Equation~(4.3)]{FH06} for $r=2$, where the authors use the color order, by Biagioli and Zeng \cite[Proposition~8.5]{BZ11}, where the authors use the length order and by Biagioli and Caselli \cite[Equation~(21)~for~$p=s=1$]{BC12}, where the authors use the color order. The latter also proved \cite[Proposition~6.2~for~$a_1=a_2=1$]{BC12} a generating function for the four-variate distribution $(\des, \ides, \fmaj, \ifmaj)$ on $r$-colored permutations. For $r=2$ \cref{eq: fmaj ifmaj} coincides with  Foata--Han, Biagioli--Zeng and Biagioli--Caselli's formulas and for the case $r \geq 3$, Equations \eqref{eq: fmaj ifmaj} and \eqref{eq: des ides fmaj ifmaj} are slightly different than  Biagioli and Caselli's formulas \cite[Equation~(21)~and~Proposition~6.2]{BC12}, respectively.

\begin{corollary}
\label{cor: IImultivariate}
We have
\begin{equation}
\label{eq: fmaj ifmaj}
\sum_{n \ge 0} \, \frac{\sum_{w \in \fS_{n, r}} \, q^{\fmaj(w)} p^{\ol{\ifmaj}(w)}}{(q^r)_n (p^r)_n} \, z^n \ = \
\prod_{c = 0}^{r-1} \, \frac{1}{(z (qp)^c; q^r, p^r)_{\infty, \infty}},
\end{equation}
and 

\begin{multline}
\sum_{n \ge 0} \, \frac{\sum_{w \in \fS_{n, r}} \, x^{\des(w)} y^{\ol{\ides}(w)} q^{\fmaj(w)} p^{\ol{\ifmaj}(w)}}{(x; q^r)_{n+1} (y; p^r)_{n+1}} \, z^n  = \\
 \sum_{m_1, m_2 \ge 0} \, \frac{x^{m_1} y^{m_2}}{(z; q^r, p^r)_{m_1 +1, m_2 + 1} \prod\limits_{c = 1}^{r-1} (z (qp)^c; q^r, p^r)_{m_1, m_2}}   \label{eq: des ides fmaj ifmaj}
\end{multline}
and
\begin{equation}
\label{eq: des* ides* fmaj ifmaj}
\sum_{n \ge 0} \, \frac{\sum_{w \in \fS_{n, r}} \, x^{\des^*(w)} y^{\ol{\ides^*}(w)} q^{\fmaj(w)} p^{\ol{\ifmaj}(w)}}{(x; q^r)_{n+1} (y; p^r)_{n+1}} \, z^n \ = \ \sum_{m_1, m_2 \ge 0} \, \frac{x^{m_1} y^{m_2}}{\prod\limits_{c = 0}^{r-1} (z (qp)^c; q^r, p^r)_{m_1 + 1, m_2 + 1}}.
\end{equation}
\end{corollary}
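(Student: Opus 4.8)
The plan is to follow verbatim the strategy used in the proof of \cref{cor: Imultivariate}, replacing the specializations $\ps^{(r)}, \ps_m^{(r)}, \widetilde{\ps}_m^{(r)}$ of \cref{thm: Icolor} with the flag-major-index specializations $\psi^{(r)}, \psi_m^{(r)}, \widetilde{\psi}_m^{(r)}$ of \cref{thm: IIcolor}. In each case I would apply one such specialization in the parameter $q$ to the unprimed variables $\bX^{(r)}$ and a second, identically defined one in the parameter $p$ to the primed variables $\bX'^{(r)}$, and then substitute into the colored Cauchy kernel identity \eqref{eq: colored cauchy kernel} of \cref{lem: colored cauchy kernel}. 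Since the specializations are ring homomorphisms, they act termwise on the product $F_w(\bX^{(r)})\,F_{\ol{w}^{-1}}(\bX'^{(r)})$, so the left-hand side of \eqref{eq: colored cauchy kernel} factors into independent contributions in $q$ and $p$, while the right-hand side is obtained by plain substitution into the product.

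For the left-hand side I would invoke the single-permutation formulas of \cref{thm: IIcolor}. Applying $\psi^{(r)}$ in $q$ sends $F_w(\bX^{(r)})$ to $q^{\fmaj(w)}/(q^r)_n$ by \eqref{eq: IIcolorfmaj}, while applying $\psi^{(r)}$ in $p$ sends $F_{\ol{w}^{-1}}(\bX'^{(r)})$ to $p^{\fmaj(\ol{w}^{-1})}/(p^r)_n = p^{\ol{\ifmaj}(w)}/(p^r)_n$, using $\ol{\ifmaj}(w)=\fmaj(\ol{w}^{-1})$; this yields the left-hand side of \eqref{eq: fmaj ifmaj}. For the four-variate identities I would instead apply $\psi_{m_1}^{(r)}$ in $q$ and $\psi_{m_2}^{(r)}$ in $p$ (respectively $\widetilde{\psi}_{m_1}^{(r)}$ and $\widetilde{\psi}_{m_2}^{(r)}$), multiply by $x^{m_1-1}y^{m_2-1}$, and sum over $m_1,m_2\ge 1$. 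Equations \eqref{eq: IIcolor(des, fmaj)} and \eqref{eq: IIcolor(des*, fmaj)}, together with the identities $\ol{\ides}(w)=\des(\ol{w}^{-1})$ and $\ol{\ides^*}(w)=\des^*(\ol{w}^{-1})$, then produce exactly the left-hand sides of \eqref{eq: des ides fmaj ifmaj} and \eqref{eq: des* ides* fmaj ifmaj}.

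For the right-hand side I would substitute $x_i^{(c)}=q^{r(i-1)+c}$ and $x_j'^{(c)}=p^{r(j-1)+c}$ into each factor and pull out $(qp)^c$, rewriting $1-z\,x_i^{(c)}x_j'^{(c)}$ as $1-z(qp)^c q^{r(i-1)}p^{r(j-1)}$. Under $\psi^{(r)}$ every index $i,j$ ranges freely, so color $c$ contributes the full double product $(z(qp)^c;q^r,p^r)_{\infty,\infty}$, giving the right-hand side of \eqref{eq: fmaj ifmaj}. For the finite specializations the index ranges are truncated: under $\psi_{m_1}^{(r)}$ the color-$0$ variables run over $1\le i\le m_1$ while the colors $c\ge 1$ run only over $1\le i\le m_1-1$ (and similarly in $p$ with $m_2$), so color $0$ contributes $(z;q^r,p^r)_{m_1,m_2}$ and each $c\ge 1$ contributes $(z(qp)^c;q^r,p^r)_{m_1-1,m_2-1}$; under $\widetilde{\psi}_m^{(r)}$ every color runs over the full range, giving $(z(qp)^c;q^r,p^r)_{m_1,m_2}$ for all $c$. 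After the reindexing $m_1\mapsto m_1+1$, $m_2\mapsto m_2+1$ these match the right-hand sides of \eqref{eq: des ides fmaj ifmaj} and \eqref{eq: des* ides* fmaj ifmaj} respectively.

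The only genuine obstacle is this last piece of bookkeeping, namely tracking the one-shorter range for the nonzero colors under $\psi_m^{(r)}$ versus the full range under $\widetilde{\psi}_m^{(r)}$. It is precisely this discrepancy that separates the denominator $(z;q^r,p^r)_{m_1+1,m_2+1}\prod_{c=1}^{r-1}(z(qp)^c;q^r,p^r)_{m_1,m_2}$ in \eqref{eq: des ides fmaj ifmaj} from the uniform product $\prod_{c=0}^{r-1}(z(qp)^c;q^r,p^r)_{m_1+1,m_2+1}$ in \eqref{eq: des* ides* fmaj ifmaj}. Once the index ranges are pinned down, identifying each color's contribution with the appropriate $(z(qp)^c;q^r,p^r)$ Pochhammer symbol is entirely routine, exactly as in the proof of \cref{cor: Imultivariate}.
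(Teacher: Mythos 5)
Your proposal is correct and takes essentially the same route as the paper: the paper's proof is the one-line instruction to combine \cref{cor: IIcolor} for $\cC = \fS_{n,r}$ with \cref{lem: colored cauchy kernel}, which is exactly your strategy of applying $\psi^{(r)}, \psi_m^{(r)}, \widetilde{\psi}_m^{(r)}$ in $q$ to $\bX^{(r)}$ and in $p$ to $\bX'^{(r)}$ on both sides of \eqref{eq: colored cauchy kernel}. Your explicit bookkeeping of the truncated index ranges (color $0$ up to $m$, colors $c \ge 1$ up to $m-1$ under $\psi_m^{(r)}$, full range under $\widetilde{\psi}_m^{(r)}$) and the reindexing $m_i \mapsto m_i + 1$ correctly fills in the computation the paper leaves implicit.
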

\begin{proof}
Combine \cref{cor: IIcolor} for $\cC = \fS_{n, r}$ and \cref{lem: colored cauchy kernel} and the proof follows.
\end{proof}
The following corollary computes a generating function for the bieulerian-bimahonian distribution $(\fdes, \ol{\ifdes}, \fmaj, \ol{\ifmaj})$ on $r$-colored permutations, which reduces to Foata and Han's formula \cite[Theorem~1.1]{FH06}, for $r=2$, where the authors use the color order.
\begin{corollary}
\label{cor: IIImultivariate}
We have
\begin{multline}
\sum_{n \ge 0} \, \frac{[r]_x [r]_y \, \sum_{w \in \fS_{n, r}} \, x^{\fdes(w)} y^{\ol{\ifdes}(w)} q^{\fmaj(w)} p^{\ol{\ifmaj}(w)}}{(x^r; q^r)_{n+1} (y^r; p^r)_{n+1}} \, z^n \ =  \\
  \sum_{m_1, m_2 \ge 0} \, \frac{x^{m_1} y^{m_2}}{(z; q^r, p^r)_{\lfloor \frac{m_1}{r} \rfloor +1, \lfloor \frac{m_2}{r} \rfloor +1} \prod\limits_{c = 1}^{r-1} (z(qp)^c; q^r, p^r)_{\lfloor \frac{m_1-1}{r} \rfloor +1, \lfloor \frac{m_2-1}{r} \rfloor +1}}.  \label{eq: fdes ifdes fmaj ifmaj} 
\end{multline}
\end{corollary}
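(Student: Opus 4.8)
The plan is to run exactly the machine used for Corollaries \ref{cor: Imultivariate} and \ref{cor: IImultivariate}, but now driven by the $\fdes$--$\fmaj$ specialization $\phi_m^{(r)}$ of \cref{thm: IIIcolor} in place of $\ps_m^{(r)}$ and $\psi_m^{(r)}$. Concretely, I would apply $\phi_{m_1}^{(r)}$ in the variables $(x,q)$ to the family $\bX^{(r)}$ and $\phi_{m_2}^{(r)}$ in the variables $(y,p)$ to the family $\bX'^{(r)}$ in the colored Cauchy kernel of \cref{lem: colored cauchy kernel}, and then form the double generating function $\sum_{m_1,m_2\ge 1}(\cdots)\,x^{m_1-1}y^{m_2-1}$. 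The left-hand side is the routine half. Since $\phi_{m_1}^{(r)}$ acts on $F_w(\bX^{(r)})$ and $\phi_{m_2}^{(r)}$ on $F_{\overline{w}^{-1}}(\bX'^{(r)})$, \cref{thm: IIIcolor} gives
\[
\sum_{m_1\ge 1}\phi_{m_1}^{(r)}(F_w)\,x^{m_1-1}=\frac{x^{\fdes(w)}q^{\fmaj(w)}}{(1-x)\prod_{i=1}^n(1-x^rq^{ir})},
\]
together with the $(y,p)$-analogue, whose numerator is $y^{\fdes(\overline{w}^{-1})}p^{\fmaj(\overline{w}^{-1})}=y^{\ol{\ifdes}(w)}p^{\ol{\ifmaj}(w)}$. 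Feeding in the elementary identity $\frac{1}{(1-x)\prod_{i=1}^n(1-x^rq^{ir})}=\frac{[r]_x}{(x^r;q^r)_{n+1}}$ and its $y$-analogue, the left-hand side collapses term by term into the left-hand side of \cref{eq: fdes ifdes fmaj ifmaj}.

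The substance of the argument is the right-hand side. Applying the same double specialization to $\prod_{c=0}^{r-1}\prod_{i,j\ge 1}(1-zx_i^{(c)}x_j'^{(c)})^{-1}$, the product factors over colors. For each $c$ the nonzero values of $x_i^{(c)}$ under $\phi_{m_1}^{(r)}$ are the powers $q^{c}(q^r)^{k}$, and those of $x_j'^{(c)}$ under $\phi_{m_2}^{(r)}$ are $p^{c}(p^r)^{l}$, so the $c$-th factor becomes $(z(qp)^c;q^r,p^r)_{N_c(m_1),N_c(m_2)}^{-1}$, where $N_c(m)$ is the number of nonzero specialized variables of color $c$. Thus the whole computation reduces to pinning down $N_c(m)$ from the definition of $\phi_m^{(r)}$ and then reindexing $m_i\mapsto m_i+1$ (to absorb the weights $x^{m_i-1}$) so as to land on the summation over $m_1,m_2\ge 0$ in the statement.

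The main obstacle is exactly this per-color count, and it is where the boundary conventions built into $\phi_m^{(r)}$ must be handled with care. By design $\phi_m^{(r)}$ spreads the staircase of exponents $0,1,\dots,m-1$ across the colors by residue modulo $r$ -- this is precisely the mechanism behind $\phi_m^{(r)}(F(\fS_{n,r}))=[m]_q^n$ in \cref{cor: applicationIIIcolor} -- so $N_c(m)$ should equal the number of exponents in $\{0,1,\dots,m-1\}$ lying in the residue class $c\bmod r$. The delicate point is that the vanishing of the top variables $x_m^{(1)},\dots,x_m^{(r-1)}$ and the identification of the last nonzero substitution $x_{m-c+1}^{(c-1)}=q^{m-1}$ interact differently with each color, so carrying out the count for every residue class and verifying that the resulting limits reduce to the floor-function subscripts $\lfloor m_i/r\rfloor+1$ and $\lfloor(m_i-1)/r\rfloor+1$ demands a careful case analysis in $m_i\bmod r$ and in $c$. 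Once these counts are secured, the two sides match and the identity follows; I expect essentially all of the genuine effort to be concentrated in this bookkeeping rather than in the generating-function manipulations, which are parallel to the previous two corollaries.
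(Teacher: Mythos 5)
Your route is exactly the paper's: its entire proof of \cref{cor: IIImultivariate} consists of combining \cref{thm: IIIcolor} (applied in $(x,q)$ to $\bX^{(r)}$ and in $(y,p)$ to $\bX'^{(r)}$) with \cref{lem: colored cauchy kernel}, and your left-hand side is handled correctly: the factorization $\frac{1}{(1-x)\prod_{i=1}^n(1-x^rq^{ir})} = \frac{[r]_x}{(x^r;q^r)_{n+1}}$ and the identification $y^{\fdes(\ol{w}^{-1})}p^{\fmaj(\ol{w}^{-1})} = y^{\ol{\ifdes}(w)}p^{\ol{\ifmaj}(w)}$ are both right. The trouble is concentrated in the one step you explicitly deferred. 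Your own description of the count is correct: under $\phi_m^{(r)}$ the surviving color-$c$ variables are exactly those whose exponents lie in $\{0,1,\dots,m-1\}$ and in the residue class $c \bmod r$, so $N_c(m) = \lfloor (m-1-c)/r \rfloor + 1$, that is, $\lfloor (m_i-c)/r \rfloor + 1$ after the shift $m = m_i+1$. This depends on $c$ itself, and it agrees with the printed subscripts only for $c=0$ (giving $\lfloor m_i/r\rfloor +1$) and $c=1$ (giving $\lfloor (m_i-1)/r\rfloor+1$); for $r \ge 3$ and $2 \le c \le r-1$ it does not, so the ``careful case analysis'' you promise cannot terminate at the displayed formula. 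Concretely, take $r=3$ and $m=2$: by definition the last nonzero substitution is $x_1^{(1)} = q$, so no color-$2$ variable survives and $N_2(2)=0$, whereas the printed subscript at $m_1 = 1$ is $\lfloor 0/3 \rfloor + 1 = 1$. The value $N_2(2)=0$ is independently forced by the paper's own consistency check $\phi_m^{(r)}(F(\fS_{n,r})) = [m]_q^n$ in \cref{cor: applicationIIIcolor}: retaining $x_1^{(2)} = q^2$ would give $[3]_q^n$ instead of $[2]_q^n$.

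Nor is the discrepancy cosmetic: since distinct $(m_1,m_2)$ contribute distinct monomials $x^{m_1}y^{m_2}$, the identity holds if and only if every per-color subscript is the true count. For $r=3$, $n=1$ one has $\ol{w}^{-1}=w$ on $\fS_{1,3}$, so the numerator is $1 + xyqp + (xyqp)^2$, and the coefficient of $xyz$ on the genuine left-hand side is $1+qp$; the printed right-hand side gives $1 + qp + q^2p^2$ at $(m_1,m_2)=(1,1)$. Executed faithfully, your method proves \cref{eq: fdes ifdes fmaj ifmaj} with the color-$c$ factor $(z(qp)^c; q^r, p^r)_{\lfloor (m_1-c)/r\rfloor+1,\, \lfloor (m_2-c)/r\rfloor+1}$ for $1 \le c \le r-1$, which collapses to the printed formula exactly when $r \le 2$ (so the Foata--Han case $r=2$ is unaffected), but it cannot prove the statement as printed for $r \ge 3$. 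The fault ultimately lies in the printed subscripts, which the paper's one-line proof never displays; but as a proof of the statement as written, your proposal fails precisely at the bookkeeping you asserted rather than verified.
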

\begin{proof}
Combine \cref{thm: IIIcolor} for $\cC = \fS_{n, r}$ and \cref{lem: colored cauchy kernel} and the proof follows.
\end{proof}
We close this section by providing generating functions for the bimahonian statistic $(\fmaj_{k, \ell}, \ol{\ifmaj_{k', \ell'}})$ and for the bieulerian-bimahonian statistic $(\des, \ol{\ides}, \fmaj_{k, \ell}, \ol{\ifmaj_{k', \ell'}})$
\begin{equation}
\sum_{n \ge 0} \, \frac{\sum_{w \in \fB_n} \, q^{\fmaj_{k, \ell}(w)} p^{\ol{\ifmaj_{k', \ell'}}(w)}}{(q^k)_n (p^{k'})_n} \, z^n \ = \
\frac{1}{(z; q^{k}, p^{k'})_{\infty, \infty} (z q^\ell p^{\ell'}; q^k, p^{k'})_{\infty, \infty}} \label{eq: fmajkl ifmajk'l'} 
 \end{equation}
and
\begin{multline} 
\sum_{n \ge 0} \, \frac{\sum_{w \in \fB_n} \, x^{\des(w)} y^{\ol{\ides}(w)} q^{\fmaj_{k, \ell}(w)} p^{\ol{\ifmaj_{k', \ell'}}(w)}}{(x; q^k)_{n+1} (y; p^{k'})_{n+1}} \, z^n  = \\ 
 \sum_{m_1, m_2 \ge 0} \, \frac{x^{m_1} y^{m_2}}{(z; q^k, p^{k'})_{m_1 +1, m_2 + 1} (z q^\ell p^{\ell'}; q^k, p^{k'})_{m_1, m_2}}, \label{eq: des ides fmajkl ifmajk'l'}
\end{multline}
for some positive (resp. nonnegative) integers $k, k'$ and $\ell, \ell'$, which reduce to Equations \eqref{eq: maj imaj}, \eqref{eq: fmaj ifmaj} and \eqref{eq: des ides maj imaj}, \eqref{eq: des ides fmaj ifmaj} for $r=2$ and $(k, \ell) = (k', \ell') = (1, 0)$, $(k, \ell) = (k', \ell') = (2, 1)$, respectively. Assigning different values for the pairs $(k, \ell)$ and $(k', \ell')$ in Formula \eqref{eq: fmajkl ifmajk'l'} results, for example, in generating functions for pairs of different Mahonian statistics on signed permutations such as $(\maj, \fmaj)$ (see also \cite[Equation~(5.12)]{BZ10} and \cite[Proposition~8.4]{BZ11}).

\section*{Acknowledgments}
The author would like to thank Christos Athanasiadis for sharing his ideas on specializations of quasisymmetric functions, as well as for his valuable comments on the presentation of this paper. This research is co-financed by Greece and the European Union (European Social Fund- ESF) through the Operational Programme "Human Resources Development, Education and Lifelong Learning" in the context of the project "Strengthening Human Resources Research Potential via Doctorate Research – 2nd Cycle" (MIS-5000432), implemented by the State Scholarships Foundation (IKY).

\end{document}